\theoremstyle{plain}
\newtheorem{thm}{Theorem}[section]
\newtheorem{corollary}[thm]{Corollary}
\newtheorem{lemma}[thm]{Lemma}
\newtheorem{proposition}[thm]{Proposition}
\newtheorem*{thmintro}{Theorem \ref{stable_preserving}}
\newtheorem*{thmintro2}{Theorem \ref{Gamma_0}}
\newtheorem*{thmintro3}{Theorem \ref{compactification}}
\newtheorem{theorem}[thm]{Theorem}
\theoremstyle{definition}
\newtheorem{definition}[thm]{Definition}
\newtheorem{condition}{Condition}
\theoremstyle{remark}
\newtheorem{example}[thm]{Example}
\newtheorem{remark}{Remark}[section]
\newtheorem*{notation}{Notation}
\newcommand{\s}[1]{\ensuremath{\mathcal{#1}}}
\newcommand{\mN}{\ensuremath{\mathbb{N}}}
\newcommand{\mC}{\ensuremath{\mathbb{C}}}
\newcommand{\mZ}{\ensuremath{\mathbb{Z}}}
\newcommand{\mR}{\ensuremath{\mathbb{R}}}
\newcommand{\mP}{\ensuremath{\mathbb{P}^{1}}}
\newcommand{\phom}{\ensuremath{\mathrm{Hom}}}
\newcommand{\rhom}[1]{\ensuremath{\mathrm{RHom}^{#1}}}
\newcommand{\bderive}[1]{\ensuremath{\mathrm{D}^{\mathrm{b}}(#1)}}
\newcommand{\coh}[1]{\ensuremath{\mathcal{H}^{#1}}}
\newcommand{\mr}[1]{\ensuremath{\mathrm{#1}}}
\newcommand{\image}{\operatorname{im}}
\newcommand{\rktot}{\operatorname{rk}_{tot}}
\newcommand{\coimage}{\operatorname{coim}}
\newcommand{\imaginary}{\operatorname{\ensuremath{\mathfrak{I}m}}}
\newcommand{\pr}{\prime}
\newcommand{\sP}{\s{P}}
\newcommand{\sE}{\s{E}}
\newcommand{\sA}{\s{A}}
\newcommand{\sK}{\s{K}}
\newcommand{\sL}{\s{L}}
\newcommand{\sV}{\s{V}}
\newcommand{\sO}{\s{O}}
\newcommand{\sF}{\s{F}}
\newcommand{\sG}{\s{G}}
\newcommand{\sS}{\s{S}}
\newcommand{\sT}{\s{T}}
\newcommand{\Stab}{\operatorname{Stab}}
\newcommand{\Aut}{\operatorname{Aut}}
\newcommand{\End}{\operatorname{End}}
\newcommand{\Coh}{\operatorname{Coh}}
\newcommand{\stab}{\ensuremath{(Z, \s{P})}}
\newcommand{\E}{\ensuremath{\mathbb{E}}}
\newcommand{\I}{\ensuremath{\mathbb{I}}}
\newcommand{\rank}{\operatorname{rank}}
\newcommand{\Pic}{\operatorname{Pic}}
\newcommand{\wtpi}{\widetilde{\pi}}
\newcommand{\shortexactseq}[3]{\ensuremath{ 0 \rightarrow #1 \rightarrow #2 \rightarrow #3 \rightarrow  0}}
\newcommand{\bxi}{\bar{\xi}}
\title{Interactions between autoequivalences, stability conditions, and moduli problems.}
\author{Parker E. Lowrey}
\begin{document}

\maketitle

\begin{abstract}
We formulate a strong compatibility between autoequivalences and Bridgeland stability conditions and derive a sufficiency criterion.  We apply this criterion to an extension of classical slope on the derived category associated to any Galois cover of the nodal cubic.  In particular, we give an explicit description of the moduli space of stable objects, its compactification (under S-equivalence), and the group of all autoequivalences compatible with the choice of stability condition.


\end{abstract}

\section{Introduction}
\label{introduction-chapter}
Triangulated categories appear throughout geometry, with the different incarnations containing varying geometric data.  A well studied example is $\bderive{X}$,  the bounded derived category of coherent sheaves on a (locally Noetherian) scheme $X$.  This paper is centered on using interactions between stability conditions and exact autoequivalences to derive information about each other, revealing a great deal of geometric information about the underlying triangulated category in the process.  In particular, we study when an autoequivalence and a stability condition are well adapted to each other.  

The prominent example of how a well adapted autoequivalence can elucidate structure in a triangulated category is the Fourier-Mukai transform on an elliptic curve, $E$.    This is an autoequivalence obtained through the integral transform $\Phi_{\s{P}}$ of $\bderive{E}$ with the kernel $\s{P}$ isomorphic to the Poincar\'e bundle.  This autoequivalence does not preserve the geometric t-structure $\mr{Coh}(E)$,  yet it is known that $\Phi_{\s{P}}$ is highly compatible with all stability conditions on $\bderive{E}$, including classical slope stability.  One can describe its action on $\bderive{E}$ through the equality $\Phi_{\s{P}} \cdot (Z, \s{P}) = (Z, \s{P}) \cdot e^{i\pi/2}$ on the stability manifold.  This description allows one to reinterpret the method used in Atiyah's classification of the moduli of semistable vector bundles on $E$ as a transitive action of $\mr{SL}(2, \mZ)$ (via $\Phi_{\s{P}}$ and $\otimes \sL$ for any principle polarization $\s{L}$) on the set of phases of $(Z, \sP)$ \cite{MR0131423, MR2264108}.


With this example in mind, we define an autoequivalence $\Phi$ and a stability condition $\stab$ to be compatible if 
$\Phi$ preserves semistable objects and Harder-Narasimhan filtrations.  We developed the following criterion giving sufficiency conditions for compatibility.
\begin{thmintro}
Let $\Phi \in \mr{Aut}(\s{T})$, where $\s{T}$ is a triangulated category.  Given $(Z, \s{P}) \in \mr{Stab}(\s{T})$, a locally finite stability condition, one has the  t-structure $\s{D}^{\leq 0} := \s{P}(0, \infty)$, with heart $\s{A}$.
If $\Phi$ satisfies
\begin{enumerate}
\item[(i)] $\Phi(\s{A}) \subset \s{D}^{\leq M} \cap \s{D}^{\geq M -1}$, $M \in \mZ$.
\item[(ii)] $\Phi$ descends to an automorphism $[[\Phi]]$ of $\image(Z) \cong K(\s{T})/\ker(Z)$.
\item[(iii)] $\Phi$ preserves the ordering on $\image(Z)_{eff,comp} \subset \image(Z)$ induced by $(Z, \s{P})$.
\end{enumerate}
then $\Phi$ is compatible with $\stab$.
\end{thmintro}
\noindent Here, $\image(Z)_{eff, comp}$  (defined in \S\ref{compatible_chapter}) is a minimal subset for which we must check that $\Phi$ strictly preserves the ordering.  The first condition is a necessary one, a fact that will be evident from the definitions. The beauty of this criterion is its ability to reduce compatibility to a series of linear algebra calculations.

To demonstrate the usefulness of our criterion we apply it to $\bderive{\E_n}$, where $\E_n$  ($n \in \mathbb{N}^{>0}$) denotes a singular scheme defined by simple combinatorial data:  $\E_n$ is a singular reducible genus 1 curve that can be envisioned as a cycle of $n$ projective planes with transverse intersections.  More explicitly, they are Galois covers of the Weierstrass nodal cubic.    The singular and non-irreducible nature of these curves limits the application of standard technology used to work with a derived category.

It is well known that given an elliptic curve E,  $\Aut(\bderive{E})$ is an extension of  $\mr{SL}(2, \mZ)$.  This fact is central to many results pertaining to elliptic curves.  Using our criterion, we provide an analog for $n$-gons.    Let $Z_{cl}: K(\E_n) \rightarrow \mC$ be defined by $Z_{cl}(F) = -\chi(F) + i(\Sigma_{0 < j \leq n} \mr{rk}_{j}(F))$. The function $Z_{cl}$ and heart $\mr{Coh}(\E_n)$ form a stability condition $\sigma_{cl}(n) := (Z_{cl}, \s{P}_{cl})$.  This stability condition is the obvious extension of classical slope to $n$-gons.
\begin{thmintro2}
	Given $n$, let $\Aut_{cl}(n) \subset \Aut(\bderive{\E_n})$ consist of all autoequivalences compatible with $(Z_{cl}, \s{P}_{cl})$.  Then 
	\begin{displaymath}
		\xymatrix{ 1 \ar[r] & ((\mC^\ast)^n \rtimes D_{2n}) \times \mZ \times \mC^\ast  \ar[r] &  \Aut_{cl}(n) \ar[r] & \Gamma_0(n) \ar[r] & 1}
	\end{displaymath}
 where $\Gamma_0(n) \subset \mr{SL}(2, \mZ)$.  The leftmost group is generated by $\Aut(\E_n)$, $\mr{Pic}^0(\E_n)$, and the double shift $[2]$. Under the action of $\Aut_{cl}(n)$ there are $\Sigma_{d|n, d > 0} \phi(\gcd(d, \frac{n}{d}))$ equivalence classes of phases, where $\phi$ is Euler's function.
\end{thmintro2}
\noindent This is a departure from the niceties of elliptic curves and nodal cubics: there is one equivalence class in those cases.  Further, the modular groups $\Gamma_0(n)$ are not unipotently generated for most $n$ \footnote{Private correspondence with Daniel Allcock}.  Since compatible spherical twists act unipotently (on $\image(Z_{cl})$), this means $\Aut_{cl}(n)$ is not generated by compatible spherical twists for large $n$.

To demonstrate how compatible autoequivalences can reveal information about stability conditions, we provide an extension of Atiyah's classification to the $n$-gon.  The semistable objects in $(Z_{cl}, \s{P}_{cl})$ correspond to Simpson semistable objects for a polarization on $\E_n$ \cite{Ruiperez_Lopez, Simpson_moduli}.  Let $\s{M}_{cl}(n,a)$ denote the the coarse moduli space of semistable objects with phase $a$.  The analog of Atiyah's work is
\begin{thmintro3}
Given a nontrivial slice $\s{P}(a)$ of $(Z_{cl}, \s{P}_{cl})$ on $\E_n$ ($n > 1$), let $\s{M}^{st}_{cl}(n,a) \subset \s{M}_{cl}(n,a)$ denote the subscheme whose closed points correspond to stable objects and $\overline{\s{M}^{st}_{cl}(n,a)}$ its closure.  Then $\overline{\s{M}^{st}_{cl}(n,a)} \cong \E_s \coprod \mZ/n\mZ$ where $s|n$ ($s$ depends on $a$) and each component of $\overline{\s{M}^{st}_{cl}(n,a)}$ is a component of $\s{M}_{cl}(n,a)$.
\end{thmintro3}
\noindent Here we restrict to the case of $n > 1$ since for the nodal cubic it is known that $\s{M}^{st}_{cl}(1,a) \cong \E_1$ \cite{bk_stab}.  The case of $n = 2$ was calculated in \cite{Ruiperez_Lopez}.

The disconnectedness of $\overline{\s{M}_{cl}^{st}(n,a)}$ is a result of the rigidness of indecomposable torsion-free, but not locally free, sheaves on $\E_n$.  If $a$ is such that locally free objects $\s{V} \in \s{M}_{cl}^{st}(n,a)$ are line bundles these ``rigid" elements correspond to line bundles restricted to $s$ consecutive components.  There is an action by the Galois group of $\pi_n: \E_n \rightarrow \E_1$ on $\overline{\s{M}_{cl}^{st}(n,a)}$ that cyclically permutes the rigid elements and factors through $\mr{Gal}(\E_s \rightarrow \E_1)$ on the positive dimensional component.  Our classification, like Atiyah's,  relies on the reduction of the problem to a finite number of specific cases.  This is provided by Theorem \ref{Gamma_0}.  The shortcoming of Atiyah's method applied to the $n$-gon is now clear: for $n > 4$ taking quotients of semistable sheaves by trivial subbundles is not enough to reduce the classification problem to a finite number of cases.  One must ``reduce"  by an infinite number of sheaves to get a finite number of classes.  This highlights the importance of having a general criterion.  

Although we restrict to stable objects in Theorem \ref{compactification}, the methods used can be extended to classify the entire coarse moduli space of semistable objects.  This is done by comparing $\s{M}_{cl}(n, a)$ to $\s{M}_{cl}(s, 1)$ for a suitable $s$.  The structure of the latter space is completely known: $\s{M}_{cl}(s, 1) \cong \coprod \mr{Sym}^r(\E_s)$.  Using this one should be able to write $\s{M}_{cl}(n, a)$ in terms of $\coprod \mr{Sym}^r(\E_s)$.   This work should allow substantial progress in classifying moduli spaces obtained from other Simpson stability conditions and more general genus 1 curves.

We conclude this introduction with a quick discussion on the definition of a compatible autoequivalence, and how this definition influences Theorem \ref{stable_preserving}.  As a starting point, the most general compatibility between an autoequivalence and a stability condition is simply that our autoequivalence preserves semistable objects.  Without more knowledge about the categorical structure of our triangulated category, finding easily verifiable conditions is not possible.  Ideally, we want to avoid almost all knowledge of our triangulated category in the calculations.  To obtain such low level conditions, one must consider more than just semistable sheaves.  A stability condition gives a binary relation on semistable sheaves, induced by the charge on $K(\s{T})$.   To take this data into account, we define a compatible autoequivalence as a semistable preserving autoequivalence that preserves this binary relation.  This new definition has the following strong consequence: if $a := \phi(\Phi(F))$ for any semistable object $F \in \s{P}(1)$, then $\Phi$ induces an equivalence $\s{P}(0, 1] \cong \s{P}(a-1, a]$.  To understand the origin of the criterion, observe that the equivalence gives $\Phi(\s{P}(0, 1]) \subset \s{P}(a, a+1] \subset \Phi(M, M-2)$ for some $M \in \mZ$.  Thus $\Phi(\s{P}(0, 1])$ is concentrated in a maximum of 2 cohomological degrees (with respect to the original t-structure).  This shows that Theorem \ref{stable_preserving}(i) is a necessary condition.  Moreover, with (i) and (ii) in place, it is clear that a compatible autoequivalence will satisfy (iii).   From this perspective, (ii) is the only strong assumption in our criterion.  It is this assumption that allows one to use the full data supplied by the stability condition.  This in turn allows us to ignore most categorical structure in our criterion.

The layout of this paper is as follows.  \S\ref{background_chapter} is an overview of stability conditions,  the inherent group action on their moduli, and integral transforms.  With these preliminaries out of the way, in \S\ref{compatible_chapter} we begin our discussion of compatibility between an autoequivalence and a stability condition.  We show some easy consequences of the definitions and begin formulating properties that we can expect compatible autoequivalences to have.  These basic properties give the criterion in Theorem \ref{stable_preserving}, and the section is concluded with the proof of Theorem \ref{stable_preserving}.  \S\ref{n-gons_chapter} contains a review of basic terminology and results about $\bderive{\E_n}$.  With the terminology set, in \S\ref{gamma_chapter} we use Theorem \ref{stable_preserving} to construct compatible autoequivalences of $\bderive{\E_n}$, study basic properties of the group they generate.  Lastly, in \S\ref{applications_chapter} we use our compatible autoequivalences to understand aspects of the stability condition $(Z_{cl}, \s{P}_{cl})$: the compactification of the moduli of stable objects of a given phase, and number of phases for a given $n$.

\subsection{A brief note on notation}
\label{notation}
Throughout this paper $\s{T}$ will be an essentially small triangulated category.  By $\mr{Aut}(\s{T})$ we mean the group of exact autoequivalences of $\s{T}$, i.e., autoequivalences that preserve the triangulated structure.

Given a set $A \subset \mC$ and a log branch $\log_\tau$ discontinuous at the ray $\mR^{\geq 0} e^{i\tau}$ with $\tau \in [0, 2\pi)$, we can assign a ``phase'' to each non-zero element of $A$: given $a \in A$ define $\phi_{f, \tau}(a) = \frac{\imaginary \log a}{\pi}$.  Clearly this depends on the log branch.  We will sometimes abbreviate $\phi_{f, 0}$ by $\phi_f$.    

We let $\mathbb{H}$ denote $\{z \in \mC| \imaginary z > 0\}$ (the standard upper half plane) and $\mathbb{H}^\prime := \mathbb{H} \cup \mR^{< 0}$.  Lastly, we let $\mu_n$ denote the group of $nth$ roots of unity.  When we we are working with an explicit generator, we will instead use $\mZ/nZ$.

\section{Background.}
\label{background_chapter}

\subsection{Stability conditions}
Stability conditions on triangulated categories were first defined and studied by Bridgeland in \cite{bridgeland_stab_triangulated}, \cite{bridgeland_spaces_stability}.  Although stability conditions are the culmination of a long standing attempt to extend geometric invariant theory (GIT) to the setting of triangulated categories, the original motivation for Bridgeland was to provide a mathematical basis for Douglas's $\Pi$-stability \cite{MR1909947, MR1940182}.

\subsubsection{Definition}
Given a triangulated category $\s{T}$, we define the K-group $K(\s{T})$ as the abelian group freely generated on $\mr{Iso}(\s{T})$ (isomorphism classes of objects of $\sT$),  subject to the relations $[B] = [A] + [C]$ if there exists a triangle $A \rightarrow B \rightarrow C \rightarrow A[1] \rightarrow \ldots$.

\begin{definition}\cite{bridgeland_stab_triangulated}
\label{stab defn}
A stability condition $(Z,\s{P})$ on a triangulated category $\s{T}$ consists of a group homomorphism
$Z: K(\s{T}) \rightarrow \mathbb{C}$ called the central charge, and full additive subcategories $\s{P}(\phi)\subset\s{T}$ for each $\phi\in\mathbb{R}$, satisfying the following axioms:
\begin{itemize}
\item[(a)] if $E\in \s{P}(\phi)$ then $Z(E)= m(E)\exp(i\pi\phi)$ for some
 $m(E)\in\mathbb{R}_{>0}$,
\item[(b)] for all $\phi\in\mathbb{R}$, $\s{P}(\phi+1)=\s{P}(\phi)[1]$,
\item[(c)] if $\phi_1>\phi_2$ and $A_j\in\s{P}(\phi_j)$ then $Hom_{\s{T}}(A_1,A_2)=0$,
\item[(d)] for each nonzero object $E\in\s{T}$ there is a finite sequence of real
numbers
\[\phi_1>\phi_2> \cdots >\phi_n\]
and a collection of triangles
\[
\xymatrix@C=.4em{
0_{\ } \ar@{=}[r] & E_0 \ar[rrrr] &&&& E_1 \ar[rrrr] \ar[dll] &&&& E_2
\ar[rr] \ar[dll] && \ldots \ar[rr] && E_{n-1}
\ar[rrrr] &&&& E_n \ar[dll] \ar@{=}[r] &  E_{\ } \\
&&& A_1 \ar@{-->}[ull] &&&& A_2 \ar@{-->}[ull] &&&&&&&& A_n \ar@{-->}[ull]
}
\]
with $A_j\in\s{P}(\phi_j)$ for all $j$.

\end{itemize}

\end{definition}

The triangles in Definition \ref{stab defn}(d) are called the Harder-Narasimhan filtration of the object $E$.   
We designate the semistable objects of this filtration by $A_{j}(E)$, or $A_{j}$ when no confusion can arise.  If the number of semistable factors is irrelevant to our context we denote the lowest phase as $\phi_-$ and the accompanying semistable factor as $A_-$.

We denote by $\s{P}(a,b]$ the full extension-closed subcategory of $\s{T}$ generated by $\s{P}(c) \subset \s{T}$, $c \in (a, b]$. It is clear from Definition \ref{stab defn}(b), that all semistable objects, up to shift, are determined by any interval of length 1.  In fact,  the subcategory $\s{P}(a,a+ 1]$  is the heart of the  bounded t-structure given by the subcategory $\s{P}(a, \infty)$ (and therefore abelian).  The subcategory $\s{P}(0,1]$ will play a special role.  Our charge, $Z$, has the property that $Z([A]) \in \mathbb{H}^\prime$ for $A \in \sP(0,1]$. Since $\mathbb{H}^\prime$ is contained in a proper open domain of $\mC^{\ast}$, we can assume the existence of a log branch on $\mathbb{H}^\prime$ such that $\frac{1}{\pi} \imaginary log$ agrees with the phases in $(0, 1]$.  We can use this log branch to assign phases to all objects in $\s{P}(0,1]$, including those that are unstable.  This fact will be used extensively in our proof of Theorem \ref{stable_preserving}.

We can alternatively define a stability condition with the data $(\sA, Z)$ where $\sA$ is the heart of a bounded t-structure and a homomorphism (called a stability function) $Z: K(\s{A}) \rightarrow \mathbb{H}^\prime$ which satisfies a certain finiteness condition similar to Definition \ref{stab defn}(d).  To recover the above definition, set $\sP(0, 1] = A$ and calculate semistable objects via a slope function defined by $Z$.


\begin{example}
Let $X$ be an smooth algebraic curve.  Define $Z(\sF) = -\mr{deg}(\sF) + i\rank(\sF)$.   Then $\Coh(X)$ and $Z$ define a stability condition on $\bderive{X}$.   The semistable objects are those calculated by slope.
\end{example}

\subsubsection{Group actions on the moduli of stability conditions}
Let $\Phi \in \mr{Aut}(\s{T})$.  The relations on $K(\s{T})$ depend only on the triangulated structure of $\s{T}$, thus, $\Phi$ descends to an automorphism $[\Phi]$ of $K(\s{T})$.   The following lemma explicitly details a commuting left/right group action on the moduli of stability conditions.  We include the proof since this paper is partially concerned with relationships between these two actions.
\begin{lemma}[\cite{bridgeland_stab_triangulated} Lemma 8.2]
\label{groupactions}
The moduli space of (locally finite) stability conditions $\Stab(\s{T})$ carries a right action of the
group
$\widetilde{\mr{GL}}^+(2, \mR)$, the universal
covering space of $\mr{GL}(2,\mR)$, and a left action by the
group
$\Aut(\s{T})$ of exact autoequivalences of $\s{T}$. These two actions
commute.
\end{lemma}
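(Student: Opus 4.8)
The plan is to construct the two actions explicitly and then verify the axioms of a stability condition are preserved in each case, concluding with the commutativity check. For the right $\widetilde{\mr{GL}}^+(2,\mR)$ action, I would first recall the standard description of the universal cover: an element is a pair $(T, f)$ where $T \in \mr{GL}^+(2,\mR)$ and $f : \mR \to \mR$ is an increasing function satisfying $f(\phi + 1) = f(\phi) + 1$, compatible with $T$ in the sense that the induced maps on $(\mR^2 \setminus 0)/\mR_{>0} \cong \mR/2\mZ$ agree. Given a stability condition $(Z, \s{P})$, I would define the new one $(Z', \s{P}')$ by setting $Z' = T^{-1} \circ Z$ (viewing $\mC \cong \mR^2$ so that $T$ acts on the target) and $\s{P}'(\phi) = \s{P}(f(\phi))$. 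The first task is to check this is again a stability condition: axiom (b) follows from $f(\phi+1) = f(\phi)+1$; axioms (c) and (d) are immediate since the semistable subcategories are merely relabeled by the increasing bijection $f$; the only real content is axiom (a), where I must verify that $Z'(E) = m'(E)\exp(i\pi\phi)$ with $m'(E) > 0$ for $E \in \s{P}'(\phi) = \s{P}(f(\phi))$. This is exactly where the compatibility condition between $T$ and $f$ in the definition of $\widetilde{\mr{GL}}^+(2,\mR)$ is used: $T^{-1}$ rotates the ray of phase $f(\phi)$ to the ray of phase $\phi$ while keeping the modulus positive.

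For the left $\Aut(\s{T})$ action, given $\Phi \in \Aut(\s{T})$ and $(Z,\s{P}) \in \Stab(\s{T})$, I would define $\Phi \cdot (Z,\s{P}) = (Z \circ [\Phi]^{-1}, \Phi(\s{P}))$, where $[\Phi]$ is the induced automorphism of $K(\s{T})$ discussed just before the lemma, and $\Phi(\s{P})(\phi) := \Phi(\s{P}(\phi))$. Since $\Phi$ is an exact autoequivalence, it sends triangles to triangles and the shift functor commutes with it up to natural isomorphism, so axiom (b) is preserved; axiom (c) holds because $\phom_{\s{T}}(\Phi A_1, \Phi A_2) \cong \phom_{\s{T}}(A_1, A_2)$; axiom (d) holds because $\Phi$ applied to a Harder-Narasimhan filtration of $E$ yields one for $\Phi(E)$. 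Axiom (a) is a direct computation: for $\Phi(E) \in \Phi(\s{P}(\phi))$ we have $(Z \circ [\Phi]^{-1})(\Phi(E)) = Z(E) = m(E)\exp(i\pi\phi)$, so the modulus is unchanged. The one subtlety to record here is that the assignment respects composition in the correct order to give a genuine left action, i.e. $(\Phi \circ \Psi) \cdot \sigma = \Phi \cdot (\Psi \cdot \sigma)$, which follows from $[\Phi \circ \Psi] = [\Phi] \circ [\Psi]$ and functoriality.

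Finally, I would verify that the two actions commute. Taking $(T,f) \in \widetilde{\mr{GL}}^+(2,\mR)$ and $\Phi \in \Aut(\s{T})$, both $\Phi \cdot \big((Z,\s{P}) \cdot (T,f)\big)$ and $\big(\Phi \cdot (Z,\s{P})\big) \cdot (T,f)$ produce a stability condition whose charge is $T^{-1} \circ Z \circ [\Phi]^{-1}$ and whose slicing at phase $\phi$ is $\Phi(\s{P}(f(\phi)))$; since the $\widetilde{\mr{GL}}^+(2,\mR)$ action only reparametrizes phases and rescales the charge on the target $\mC$, while the $\Aut(\s{T})$ action only transports subcategories through $\Phi$ and precomposes the charge on the source $K(\s{T})$, these two operations act on independent data and manifestly commute. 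The main obstacle, and the only step requiring genuine care rather than bookkeeping, is the verification of axiom (a) under the right action, since it is there that the precise meaning of the universal-cover element $(T,f)$ — the coupling between the linear map $T$ on $\mR^2 \cong \mC$ and the phase-lift $f$ on $\mR$ — must be deployed; everything else reduces to the observation that both actions carry triangles to triangles and semistable objects to semistable objects.
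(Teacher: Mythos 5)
Your proposal is correct and follows essentially the same route as the paper's proof (which is Bridgeland's): the same pair description $(T,f)$ of $\widetilde{\mr{GL}}^+(2,\mR)$, the same formulas $Z' = T^{-1}\circ Z$, $\s{P}'(\phi) = \s{P}(f(\phi))$ for the right action and $(Z\circ[\Phi]^{-1}, \Phi(\s{P}(\cdot)))$ for the left action, and the same commutativity check. The paper leaves the axiom verifications to the reader, whereas you spell them out, correctly isolating axiom (a) under the right action as the only step that uses the coupling between $T$ and $f$; this is a fleshing-out, not a different argument.
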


\begin{proof}
First note that the group $\widetilde{\mr{GL}}^+(2, \mR)$ can be thought of as the
set of pairs $(T,f)$ where $f\colon \mR\to\mR$ is an
increasing map with $f(\phi+1)=f(\phi)+1$, and  $T\colon \mR^2\to\mR^2$
is an orientation-preserving linear isomorphism, such that the induced maps on
$S^1=\mR/2\mZ=\mR^2/\mR_{>0}$ are the same.

Given a stability condition $\sigma=(Z,\s{P})\in\Stab(\s{T})$, and a
pair $(T,f)\in\widetilde{\mr{GL}}^+(2, \mR)$, define a new stability
condition $\sigma'=(Z',\s{P}')$ by setting $Z'=T^{-1}\circ Z$ and
$\s{P}'(\phi)=\s{P}(f(\phi))$. Note that the semistable objects of the
stability conditions $\sigma$
and $\sigma'$ are the same, but the phases have been
relabeled.

For the second action, we know that an element
$\Phi\in\Aut(\s{T})$ induces an automorphism $[\Phi]$ of $K(\s{T})$. If
$\sigma=(Z,\s{P})$ is a stability condition on $\s{T}$ define $\Phi(\sigma)$
to be the stability condition $(Z\circ[\Phi]^{-1},\s{P}')$, where
$\s{P}'(t)$=$\Phi(\s{P}(t))$. The reader can easily check that this action
is by isometries and commutes with the first.
\end{proof}

We think of the action of $\widetilde{\mr{GL}}^+(2, \mR)$ as a ``change of coordinates'':  it is a change of basis on the range of our charge $Z$, followed by an appropriate reassigning of the phases  so that Definition \ref{stab defn} is satisfied.  An alternative way to understand the left group action is via the abelian subcategory/stability function viewpoint.  In particular, our autoequivalence $\Phi \in \Aut(\s{T})$ takes $\s{A}$ to another abelian category, $\Phi(\s{A})$.  One then obtains a stability function on $\Phi(\s{A})$ by pullback, $\Phi\cdot Z = Z([\Phi^{-1}]( \bullet))$.

\subsection{Integral transforms}
Let $X$ be a locally Noetherian scheme.  For any object $\sK \in \bderive{X \times X}$, using the natural projections
\begin{displaymath}
\xymatrix@C=.7em@R=.6em{ & X \times X \ar[ld]_{\rho_1} \ar[rd]^{\rho_2} & \\
X & & X }
\end{displaymath}
one can define an exact functor $\Phi_{\sK} \in \End(D^-(X))$ called an integral transform with kernel 
$\sK$.  Explicitly, 
\begin{displaymath}
\Phi_{\sK}(F) = \rho_{2\ast}(\rho_1^{\ast}(F)\otimes^{L} \sK).
\end{displaymath}
In the case that $\sK$ is either perfect or flat over $X$ (under the map $\rho_1$), we can restrict this to an endomorphism of $\bderive{X}$, see \cite{RMdS09} or \cite{bk_fm_fib}.  Given two integral transforms, we have the composition formula $\Phi_{\sK} \circ \Phi_{\sK^{\prime}} \cong \Phi_{\sK^{\prime} * \sK}$ where $\sK^{\prime} * \sK := \rho_{13\ast}(\rho_{12}^{\ast}(\sK^{\prime})\otimes^{L}\rho_{23}^{\ast}(\sK))$, with $\rho_{ij}$ the obvious projections of $X \times X \times X$  \cite{MR607081}.

The next proposition gives commutation relations between integral transforms and automorphisms of $X$.  This result is well known, and proven by standard techniques.
\begin{proposition}
\label{geometric_automorphism_relation}
Let $\Phi_{\s{K}}$ denote the endomorphism of $\bderive{X}$ with kernel $\sK \in \bderive{X \times X}$.  If $\alpha$ is an automorphism of $X$, then
\begin{enumerate}
\item $\alpha_{\ast} \circ \Phi_{\s{K}} \cong \Phi_{(\mr{Id} \times \alpha)_{\ast}\s{K}}$
\item $\Phi_{\s{K}} \circ \alpha_{\ast} \cong \Phi_{(\alpha \times \mr{Id})^{\ast}\s{K}}$
\item $\Phi_{\s{K}} \circ \alpha^{\ast} \cong \Phi_{(\alpha \times \mr{Id})_{\ast}\sK}$
\item $\alpha^{\ast} \circ \Phi_{\s{K}} \cong \Phi_{(\mr{Id} \times \alpha)^{\ast}\s{K}}$
\end{enumerate}
\end{proposition}



\section{Compatible Autoequivalences}

Throughout this section $\sT$ will be a triangulated category with an exact autoequivalence $\Phi$ and stability condition $\stab$.  We denote the heart of the t-structure $\s{D}^{\leq 0} = \s{P}(0, \infty)$ by $\s{A} = \s{P}(0, 1]$. 

\subsection{Definition of compatibility}
\label{compatible_chapter}
\begin{definition}
\label{compat_defn}
An autoequivalence $\Phi$ of $\sT$ is compatible with $\stab$ if
\begin{enumerate}
\item $\Phi(F)$ is semistable for every semistable $F$,
\item $\Phi$ preserves all Harder-Narasimhan filtrations, i.e., if $\{A_i\}_{1 \leq i \leq n}$ belongs to a Harder-Narasimhan filtration for $E$, then $\{\Phi(A_i)\}_{1 \leq i \leq n}$ belongs to a Harder-Narasimhan filtration for $\Phi(E)$.
\end{enumerate}
\end{definition}

The coherence between $\Phi$ and $\stab$ supplied by the second condition is the focus of this paper.  It allows for a nice comparison between $\stab$ and $\Phi(\stab)$. For example, it ensures no unstable objects are mapped to stable objects; something the first condition cannot guarantee.  An autoequivalence that satisfies Definition \ref{compat_defn}(i) is known in literature as ``semistable preserving''.



The definition of compatibility has an alternate characterization:
\begin{proposition}
 \label{compat_alt_defn}
  $\Phi$ is compatible with $\stab$ if and only if it is semistable preserving and
  \begin{enumerate}
  \item $\phi(F) < \phi(G)$ implies $\phi(\Phi(F)) < \phi(\Phi(G))$,
  \item $\phi(F) = \phi(G)$ implies $\phi(\Phi(F)) = \phi(\Phi(G))$.
  \end{enumerate}
\end{proposition}
\begin{proof}
 If $\Phi$ satisfies the conditions of the proposition then it is clearly compatible with $\stab$.  For the other direction, if $F, G$ are semistable with $\phi(F) < \phi(G)$, $F\oplus G$ has a canonical Harder-Narasimhan filtration with $A_1 = F, A_2 = G$.  Since $\Phi$ is compatible, $\phi(\Phi(F)) < \phi(\Phi(G))$.  If $\phi(F) = \phi(G)$ then $F\oplus G$ is semistable and thus $\Phi(F\oplus G)$ as well.  Since $\Phi(G)$ is a summand of $\Phi(F\oplus G)$, this forces $\phi(\Phi(F)) = \phi(\Phi(G))$.      
\end{proof}
We abbreviate these two conditions as follows: $\phi(F) $ {\tiny $\stackrel{<}{(=)}$} $\phi(G)$ implies $\phi(\Phi(F)) $ {\tiny $\stackrel{<}{(=)}$} $\phi(\Phi(G))$.  The ordering of slices imparts a binary relation on semistable objects (it is not a linear order since it doesn't satisfy antisymmetry).  By abuse of terminology, we will refer to this binary relation as an ordering.  We now list several immediate consequences of Definition \ref{compat_defn} and Proposition \ref{compat_alt_defn}.
\begin{proposition}\mbox{}
\label{properties}
The composition of compatible autoequivalences are compatible.  Further, if $\Phi$ is compatible with \stab, then
\begin{enumerate}
\item for any nontrivial slice $\sP(a)$, $\Phi$ induces an equivalence $\Phi: \s{P}(a) \xrightarrow{\cong} \s{P}(b)$ for some $b \in \mR$,
\item $\Phi(F)$ is semistable if and only if $F$ is semistable,
\item  $\Phi(F)$ is stable if and only if $F$ is stable,
\item $\Phi$ is compatible if and only if $\Phi^{-1}$ is compatible.
\end{enumerate}
\end{proposition}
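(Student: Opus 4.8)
The plan is to derive everything from one structural observation: a compatible $\Phi$ carries the Harder--Narasimhan filtration of any object to the Harder--Narasimhan filtration of its image. I would open with the composition claim, which is purely formal. Given compatible $\Phi$ and $\Psi$, semistable preservation of the composite follows by applying the two functors in turn, and strict order preservation follows by chaining the two inequalities (resp.\ equalities) of Definition \ref{compatibility}; here one uses that $\Phi(F)$ and $\Phi(G)$ are themselves semistable before they are fed to $\Psi$.

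For (1), the forward implication is just the definition of semistable preservation. For the converse I would invoke the HN filtration. Let $F \neq 0$ have HN factors $A_j \in \s{P}(\phi_j)$ with $\phi_1 > \cdots > \phi_n$. Applying the exact functor $\Phi$ to the defining triangles produces a filtration of $\Phi(F)$ with factors $\Phi(A_j)$; these are semistable because $\Phi$ is semistable preserving, and their phases remain strictly decreasing because $\Phi$ strictly preserves the ordering. This is precisely the data demanded of an HN filtration, so by uniqueness it \emph{is} the HN filtration of $\Phi(F)$. In particular the filtration length is preserved, so $\Phi(F)$ is semistable (length one) exactly when $F$ is.

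Part (2) then reduces to Proposition \ref{order_preserve_prop}: a compatible $\Phi$ restricts to an equivalence of abelian slices $\s{P}(a) \xrightarrow{\cong} \s{P}(b)$. Since the stable objects are exactly the simple objects of a slice, and any equivalence of abelian categories preserves simplicity, $F$ is stable iff $\Phi(F)$ is; combined with (1), which locates $F$ in a slice once $\Phi(F)$ is assumed stable, this yields the biconditional. For (3) I would first note that $\Phi^{-1}$ is semistable preserving, which is immediate from (1) applied with $F = \Phi^{-1}(G)$. Strict order preservation of $\Phi^{-1}$ I would then verify by contraposition: if $\Phi^{-1}$ failed to preserve a strict inequality or an equality between $\phi(F')$ and $\phi(G')$, then applying $\Phi$ and invoking its compatibility would contradict the assumed relation between $F'$ and $G'$. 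The biconditional in (3) is then symmetric, since the same argument applied to $\Phi^{-1}$ recovers $\Phi$.

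The main obstacle is the converse direction of (1): one must rule out that $\Phi$ could manufacture semistability by collapsing a genuine HN filtration. The resolution is exactly that strict order preservation keeps the transported phases distinct and decreasing, so no collapse occurs and uniqueness of HN filtrations applies. Everything else is formal once this is in hand; the only external inputs are uniqueness of HN filtrations, the identification of stable objects with the simple objects of a slice, and Proposition \ref{order_preserve_prop}.
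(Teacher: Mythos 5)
Your proposal is correct, but there is essentially nothing in the paper to compare it with: the paper states Proposition \ref{properties} without proof and passes directly to the subsection on the compatibility criterion, treating all four assertions as routine. Your write-up supplies precisely what that omission takes for granted. The one point that genuinely requires an argument is the converse direction of (1), and you identify and handle it correctly: an exact autoequivalence carries the tower of triangles defining the Harder--Narasimhan filtration of $F$ to such a tower for $\Phi(F)$; semistable preservation and strict order preservation keep the factors semistable with strictly decreasing phases; and uniqueness of HN filtrations then forces the two filtrations to have the same length, so semistability of $\Phi(F)$ (length one) forces semistability of $F$. Given (1), your deductions of (2) and (3) are formal and correct: for (2) you use Proposition \ref{order_preserve_prop} together with two standard facts from \cite{bridgeland_stab_triangulated} --- that each slice $\s{P}(\phi)$ is abelian and that its stable objects are exactly its simple objects --- plus the observation that any equivalence of abelian categories preserves simplicity; for (3), semistable preservation of $\Phi^{-1}$ follows from (1) applied to $F = \Phi^{-1}(G)$, and strict order preservation follows by contraposition using trichotomy of real phases (noting, as you implicitly do, that $\Phi^{-1}(F')$ and $\Phi^{-1}(G')$ are already known to be semistable at that stage). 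The chaining argument for compositions, including the remark that $\Phi(F)$ and $\Phi(G)$ are semistable before $\Psi$ is applied, is likewise complete.
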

\begin{proof}
The composition statement is obvious.  For (1) the requirements for compatibility ensure $\Phi$ preserves semistable objects, and strictly preserves the ordering.   We can recover the semistable objects of a particular slice via this relation (short of the zero object).  Since slices are full subcategories, the result follows from $\Phi$ being an equivalence.  
(2) is immediate from the proof of Proposition \ref{compat_alt_defn}.  (3) follows from (1) and (2) since stable objects of phase $\alpha$ are the simple objects in the abelian category $\s{P}(\alpha)$.  

Lastly, for (4) note that $\Phi^{-1}$ is semistable preserving by (2).  The second condition of Proposition \ref{compat_alt_defn} is immediate from (2) and the compatibility of $\Phi$.
\end{proof}

There is a stronger notion of compatibility that will play an important role in the application of our criterion.  This stronger notion incorporates the structure of the left and right group actions on $\Stab(\sT)$. 
\begin{definition}
$\Phi$ is strongly compatible if $\Phi\cdot(Z, \s{P}) = (Z, \s{P})\cdot g$ for some $g \in \widetilde{\mr{GL}}^{+}(2, \mR)$.
\end{definition}
\subsection{A criterion for compatibility}
Our goal in this section is to obtain a criterion to test compatibility with conditions verifiable from $K(\s{T}$ and the natural t-structure given by $stab$.  To motivate the criterion, we start by assuming $\Phi$ is compatible with $\stab$ and derive various facts about $\Phi$. 

Proposition \ref{properties} implies the existence of $b \in \mR$ such that $\Phi(\s{P}(0)) \subset \s{P}(b)$.  Chasing through definitions reveals $\Phi(\s{P}(1)) \subset \s{P}(b+1)$.  The order preserving property of $\Phi$ implies $\Phi$ restricts to an equivalence: $\Phi(\sP(0, 1])) \xrightarrow{\cong} \sP(b, b+1]$.  Thus, $\Phi(\s{A})$ can be concentrated in a maximum of 2 consecutive cohomological degrees.  Our first condition is
\begin{condition}
\label{cohomological_condition}
 $\coh{i}(\Phi(\s{A})) = 0$ if $i \neq \{M, M-1\}$ where $M \in \mZ$.
\end{condition}

Without loss of generality, we assume $M = 0$ (we can compose with the compatible autoequivalence $[1]$).  With Condition \ref{cohomological_condition} in place, $\stab$ allows the choice of a log branch on $\mC^{\ast}$ discontinuous at $\mR^{> 0} e^{i0}$ having the property that the phases assigned by this branch agree with those in $\sP(0, 2]$.  We want to assign a phase $\phi_f(\Phi(F))$\footnote{The subscript $f$ is used to emphasize that the phase is defined regardless if $\Phi(F)$ is semistable, thus it is a ``fake'' phase} for all $F \in \s{A}$, not necessarily semistable.  Condition \ref{cohomological_condition} alone does not enable this: it is possible that $Z(\Phi(F)) = 0, F \in \sT$ with $F \neq 0$.   If we assume $[\Phi]$ descends to an automorphism $[[\Phi]]$ of $\image(Z)$, this issue is alleviated.  This motivates our second condition.

\begin{notation}
Given $[F] \in K(\sT)$, let $[[F]]$ denote its image in $\image(Z)$.  Likewise, if $A \in \End(K(\sT))$, let $[[A]]$ denote the resulting endomorphism on $\image(Z)$, if well defined. 
\end{notation}

\begin{condition}
$[\Phi]$ descends to an automorphism  $[[\Phi]]$ of $\image(Z)$.
\end{condition}
\begin{remark} It seems plausible that this condition may be stricter than needed to prove compatibility.  However, with a less stringent Condition (2) it is not likely that Condition (3) can be phrased in terms of K-theoretic calculations. \end{remark}

We can now assign our ``fake" phases:

\begin{definition}
\label{faux_phase}
With the choice of log branch above, given a nonzero $v \in \image(Z)$, define $\phi_{f}(v) := \frac{1}{\pi}\imaginary log(Z(v))$.  If $[[\sF]] = v$ for $\sF \in {\sT}$, we define $\phi_f(\sF)$ to be $\phi_f(v)$.
\end{definition}

Our log branch gives two orderings on the semistable objects of $\s{A}$: prior and subsequent to the application of $\Phi$. If $\Phi$ is compatible, these will be identical.  Since these orderings solely rely on $\image(Z) := K(\s{T})/\ker(Z)$ and the log branch, we will rephrase this in terms of a subset of $\image(Z)$.  First, in order to limit calculations, we restrict to the ``effective" subset:
\begin{displaymath}
\image(Z)_{eff} =  \{\pm v \in \image(Z)\;| \;\exists \textrm{ s.s. } G \in \s{A} \textrm{ with } [[G]] = v\}.
\end{displaymath}
The discontinuity of the log branch disallows meaningful comparison between the before and after orderings on $\image(Z)_{eff}$.  We further restrict to the largest subset of  $\image(Z)_{eff}$ for which this comparison {\it is} valid. To do so, we need to ensure that $[[\Phi]]$ doesn't move objects ``past the discontinuity''.   Define
\begin{align*}
\image(Z)_{comp} =  \{ v | \; Z(v) \in \mathbb{H}^\prime \}  \cup \{\pm v |\;  Z(v) \in \mathbb{H}^\prime \textrm{ and } Z([[\Phi]]v) \in \mathbb{H}^\prime\}
\end{align*}

\begin{example}
Suppose that $[[\Phi]]$ is the restriction of an element $\Upsilon \in \mr{GL}^+(2, \mR)$ (via $Z$).  Let $\s{G} = \mathbb{H}^{\prime} \cap \Upsilon^{-1}(\mathbb{H}^{\prime})$.  Then $\image(Z)_{comp}$ is the subset of $\image(Z)$ contained in $\mathbb{H}^\prime \cup -\s{G}$.

\begin{figure}[htb!]
\centering
\resizebox{!}{50mm}{\includegraphics{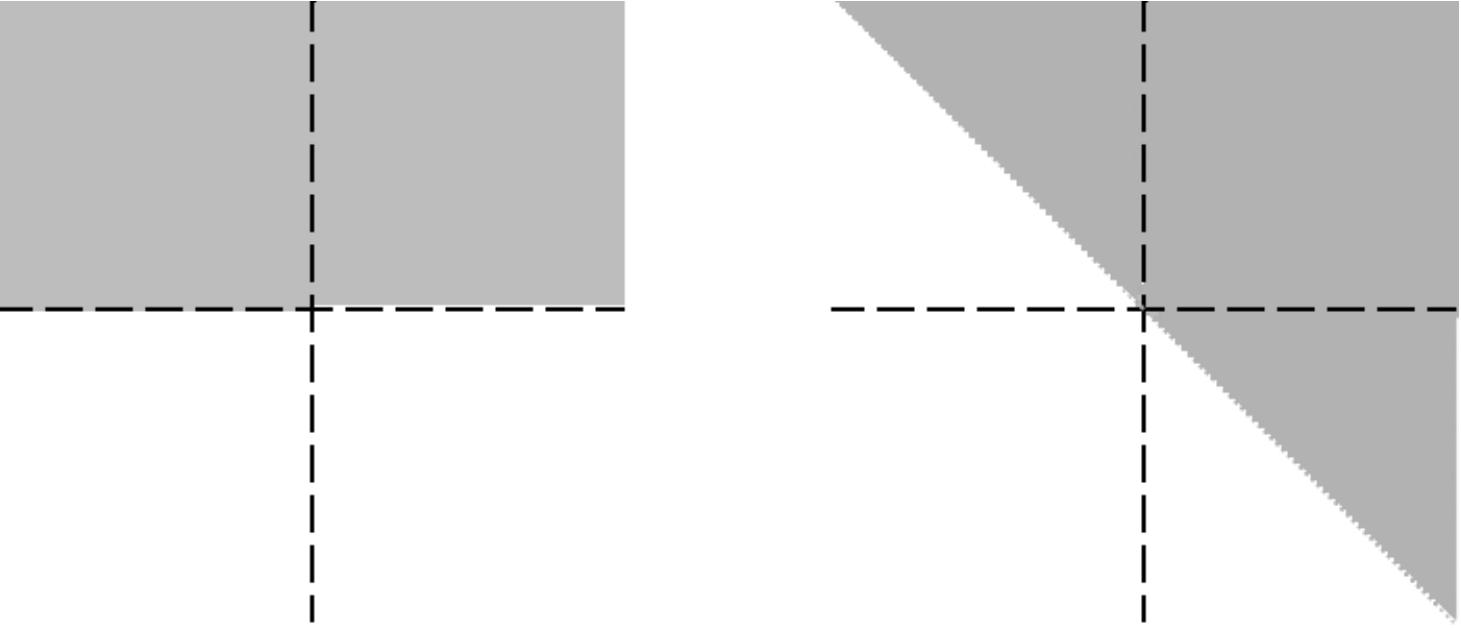}}
\caption{$\mathbb{H}^\prime$ and $\Upsilon^{-1}(\mathbb{H}^\prime)$}
\end{figure}
\begin{figure}[htb!]
\centering
\resizebox{!}{50mm}{\includegraphics{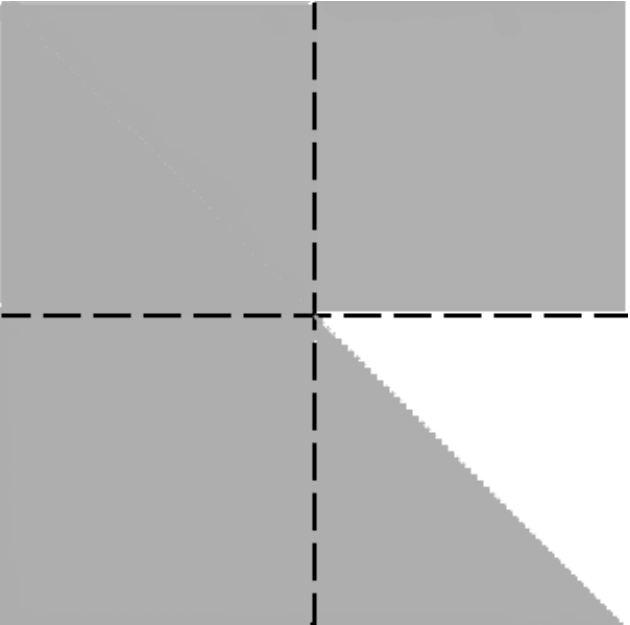}}
\caption{$\mathbb{H}^\prime \cup -\s{G}$}
\end{figure}
\end{example}

Since we can compare the two orderings on $\image(Z)_{comp}$, our final condition is that these orderings are the same, i.e., the ordering is strictly preserved under $[[\Phi]]$.
\begin{condition}
\label{order_preserving_condition}
Given $v,w \in \image(Z)_{eff \cap comp} := \image(Z)_{eff} \cap \image(Z)_{comp}$ if $\phi_f(v) $ {\tiny $\stackrel{<}{(=)}$} $\phi_f(w)$ then $\phi_f([[\Phi]](v))$ {\tiny $\stackrel{<}{(=)}$} $ \phi_f([[\Phi]](w))$ (i.e. strict preservation).
\end{condition}
\noindent The advantage of working with $\image(Z)$ is the ability to ignore $\s{T}$ and the stability condition.  This is purely linear algebra. We remark that although there is no explicit mention of the log branch in Condition 3, our subset $\image(Z)_{comp}$ is only meaningful for comparison when the log branch is at the positive real axis.

Condition \ref{cohomological_condition} may seem redundant after Condition \ref{order_preserving_condition}  and the discussion in \S{\ref{compatible_chapter}}, however, we needed the former to reliably define the latter.

\subsection{Proof that the criterion implies compatibility}
We now assume the autoequivalence $\Phi$ satisfies

\begin{enumerate}
\item[C1]\label{C1} $\coh{i}(\Phi(\sA)) = 0$ if $i \neq \{M, M-1\}$, $M \in \mZ$.
\item[C2]\label{C2} $[\Phi]$ descends to an automorphism $[[\Phi]]$ of $\image(Z)$.
\item[C3]\label{C3} Given $v,w \in \image(Z)_{eff \cap comp}$ with $\phi_f(v) $ {\tiny $\stackrel{<}{(=)}$} $\phi_f(w)$, then $\phi_f([[\Phi]](v))$ {\tiny $\stackrel{<}{(=)}$} $ \phi_f([[\Phi]](w))$ (i.e. strict preservation).
\end{enumerate}
See Definition \ref{faux_phase} or \S\ref{notation} for the definition of $\phi_f$.  Without loss of generality, we will assume $M = 0$ and fix the choice of log branch to be discontinuous on $\mR^{>0}$ and satisfying $\phi_f(A) \in (0,1]$ for $A \in \s{A}$.

We will repeatedly use the following proposition:

\begin{proposition}
\label{membership_please}
There exists an $m \in [0, 1]$ such that 
 \begin{enumerate}
  \item[(i)] $\Phi(\sP(0, m]) \subset \s{A}$
  \item[(ii)] $\Phi(\sP(m, 1]) \subset \s{A}[1]$
 \end{enumerate}
 \end{proposition}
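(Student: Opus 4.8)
The plan is to stop fighting the two-term object $\Phi(F)$ directly and instead pull the canonical truncation triangle of $\Phi(F)$ back through $\Phi^{-1}$, turning the problem into an honest short exact sequence inside the abelian category $\s{A}$, where the torsion pair defining $\s{A}_0,\s{A}_1$ and Lemma \ref{psi_property} apply verbatim. Concretely, for $F\in\s{A}$ set $B=\coh{-1}(\Phi(F))$ and $C=\coh{0}(\Phi(F))$, both in $\s{A}$ by Condition \ref{cohomological_condition} (with $M=0$), and consider the truncation triangle $B[1]\to\Phi(F)\to C$. Applying the exact functor $\Phi^{-1}$ produces a triangle $\Phi^{-1}(B)[1]\to F\to\Phi^{-1}(C)$, and the entire argument is driven by the $\s{A}$-cohomology of this triangle.

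The key preliminary step is to show that $\Phi^{-1}$ also obeys a two-term estimate, $\Phi^{-1}(\s{A})\subset\s{D}^{\geq 0}\cap\s{D}^{\leq 1}$, deduced from Condition \ref{cohomological_condition} alone by a t-exactness computation: since $\Phi(\s{A})\subset\s{D}^{\leq 0}$ and $\s{D}^{\leq 0}$ is generated under extensions by the $\s{A}[n]$ with $n\geq 0$, one gets $\Phi(\s{D}^{\leq 0})\subset\s{D}^{\leq 0}$, hence $\Phi^{-1}(\s{D}^{\geq 0})\subset\s{D}^{\geq 0}$; dually $\Phi(\s{A})\subset\s{D}^{\geq -1}$ gives $\Phi(\s{D}^{\geq 0})\subset\s{D}^{\geq -1}$ and so $\Phi^{-1}(\s{D}^{\leq 0})\subset\s{D}^{\leq 1}$. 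Granting this, $\Phi^{-1}(B)$ and $\Phi^{-1}(C)$ both live in degrees $\{0,1\}$, and chasing the long exact $\s{A}$-cohomology sequence of $\Phi^{-1}(B)[1]\to F\to\Phi^{-1}(C)$ (using $F\in\s{A}$, so $\coh{i}(F)=0$ for $i\neq0$) forces $\coh{0}(\Phi^{-1}(B))=0$ and $\coh{1}(\Phi^{-1}(C))=0$. Thus $\Phi^{-1}(B)=\bar B[-1]$ and $\Phi^{-1}(C)=\bar C$ are \emph{pure}, with $\bar B,\bar C\in\s{A}$, and the triangle collapses to a short exact sequence
\[
0\to\bar B\to F\to\bar C\to 0
\]
in $\s{A}$, where $\Phi(\bar B)=B[1]$ and $\Phi(\bar C)=C$.

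With this sequence in hand I would finish by combining the torsion-pair structure with charge positivity. For (i), let $F\in\s{A}_0=\s{P}(0,\psi]$; as $\s{P}(0,\psi]$ is the torsion-free class of $(\s{P}(\psi,1],\s{P}(0,\psi])$ it is closed under subobjects, so $\bar B\subset F$ gives $\phi^+(\bar B)\leq\psi$. On the other hand, if $\bar B\neq 0$ then $Z(\Phi(\bar B))=Z(B[1])=-Z(B)\in-\mathbb{H}^\prime$ since $0\neq B\in\s{A}$; but were every Harder--Narasimhan factor of $\bar B$ of phase $\leq\psi$, Lemma \ref{psi_property} together with the additive closure of $\mathbb{H}^\prime$ would force $Z(\Phi(\bar B))\in\mathbb{H}^\prime$, a contradiction, so $\phi^+(\bar B)>\psi$. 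This is incompatible with $\phi^+(\bar B)\leq\psi$, hence $\bar B=0$, i.e. $\coh{-1}(\Phi(F))=0$ and $\Phi(F)\in\s{A}$. Part (ii) is the mirror image: for $F\in\s{A}_1=\s{P}(\psi,1]$ (a torsion class, closed under quotients) the quotient $\bar C$ has $\phi^-(\bar C)>\psi$, whereas $\bar C\neq 0$ gives $Z(\Phi(\bar C))=Z(C)\in\mathbb{H}^\prime$ while the complementary form of Lemma \ref{psi_property} ($\phi(G)>\psi\Rightarrow Z(\Phi(G))\in-\mathbb{H}^\prime$, immediate from the supremum description of $\psi$ in Definition \ref{psi_define} and Condition~(2)) forces $Z(\Phi(\bar C))\in-\mathbb{H}^\prime$; thus $\bar C=0$ and $\Phi(F)\in\s{A}[1]$.

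The main obstacle is precisely the purity step. A priori $\Phi(F)$ genuinely occupies two cohomological degrees, and the charge $Z(\Phi(F))$ only records an ``average'' phase, never the cohomological spread, so no argument phrased directly on $\Phi(F)$ can succeed; the essential device is the two-term estimate for $\Phi^{-1}$ and the resulting collapse of $\Phi^{-1}(B)$ and $\Phi^{-1}(C)$ to pure objects, which transports the question into an honest short exact sequence where the torsion-pair closure and Lemma \ref{psi_property} become applicable. I would also watch the boundary phase $\phi=\psi$, where the suprema defining $\psi$ and $\omega$ may only be approached, to ensure the strict and non-strict inequalities align, though this does not affect the structure of the argument.
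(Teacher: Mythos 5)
Your proof is correct, and its first half coincides with the paper's own argument: the short exact sequence $0\to\bar B\to F\to\bar C\to 0$ in $\s{A}$, with $\Phi(\bar B)=\coh{-1}(\Phi(F))[1]$ and $\Phi(\bar C)=\coh{0}(\Phi(F))$, is precisely Lemma \ref{exact_sequence}, and your derivation of it (the two-term estimate $\Phi^{-1}(\s{A})\subset\s{D}^{\geq 0}\cap\s{D}^{\leq 1}$ obtained by orthogonality from Condition \ref{cohomological_condition}, followed by the cohomology chase) is the same one, spelled out more carefully than in the paper. Where you genuinely diverge is in how you kill $\bar B$ and $\bar C$. The paper first reduces to semistable objects: Lemma \ref{region_pure} converts purity of the image into the phase dichotomy at $\psi$, Lemma \ref{preserves_sheaves} then shows every semistable object has cohomologically pure image, and the Proposition follows by applying this to the Harder--Narasimhan factors and invoking extension-closedness of $\s{A}$ and $\s{A}[1]$. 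You never reduce to semistable objects: the torsion-pair closure of $\s{P}(0,\psi]$ under subobjects (resp.\ of $\s{P}(\psi,1]$ under quotients) bounds the phases of $\bar B$ (resp.\ $\bar C$), and then additivity of $Z\circ[[\Phi]]$ over Harder--Narasimhan factors, Lemma \ref{psi_property}, the additive closure of $\mathbb{H}^\prime$, and the disjointness $\mathbb{H}^\prime\cap(-\mathbb{H}^\prime)=\emptyset$ force $\bar B=0$ (resp.\ $\bar C=0$). This bypasses Lemmas \ref{region_pure} and \ref{preserves_sheaves} entirely; what it needs instead is the complementary form of Lemma \ref{psi_property} --- $\phi(G)>\psi$ implies $Z([[\Phi]][G])\in-\mathbb{H}^\prime$ for semistable $G$ --- and your justification of it is right: if $Z([[\Phi]][G])\in\mathbb{H}^\prime$, then $-[[G]]$ lies in $\coimage(Z)_{eff}\cap\coimage(Z)_{comp}$ with fake phase $\phi(G)+1>\omega$, contradicting Definition \ref{psi_define}. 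Each route has its merits: yours is shorter, self-contained, and makes explicit an estimate the paper leaves implicit, while the paper's detour through Lemma \ref{preserves_sheaves} produces a statement that is reused twice later in the proof of Theorem \ref{stable_preserving}, so its extra generality is not wasted. Finally, your worry about the boundary phase is unfounded: the non-strict inequality in Lemma \ref{psi_property} matches $\s{A}_0=\s{P}(0,\psi]$ exactly, and the strict inequality in the complementary form matches $\s{A}_1=\s{P}(\psi,1]$, so the inequalities align without any limiting argument.
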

\begin{remark} If $m = 0$ we take $\sP(0, 0]$ to mean the category consisting of only the zero object.
\end{remark}

\begin{remark}
By Proposition \cite[Lemma 1.1.2]{polishchuk}, Condition (C1) implies $\Phi(\sA)$ is a tilt of $\sA$.  However, $\sA^0$ and $\sA^1$ are not the tilting subcategories. 
\end{remark}

To prove the proposition, let $m  \in \mR$ be the minimal phase such that if $v \in \image(Z)_{eff \cap comp}$ then $\phi_f(v) \leq m + 1$. Clearly $m$ depends on both $\Phi$ and $\s{A}$. Condition (C3) ensures $m$ has the following important property:

\begin{lemma}
\label{psi_property}
If $F \in \s{A}$ is semistable and $\phi(F) \leq m$, then $Z([\Phi(F)]) \in \mathbb{H}^\prime$. Further, $m$ is maximal with respect to this property.
\end{lemma}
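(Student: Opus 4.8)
The plan is to reformulate everything in terms of the implied phase $\phi_f$ and then feed the order preservation of Lemma \ref{semistable_phases} into the definition of $\psi$. Since we have normalized to $M = 0$, the chosen log branch is discontinuous on $\mathbb{R}^{>0}$, the function $\phi_f$ takes values in $(0,2]$, and for a nonzero class $w$ one has $Z(w) \in \mathbb{H}^\prime$ if and only if $\phi_f(w) \in (0,1]$. In particular the conclusion $Z(\Phi([F])) \in \mathbb{H}^\prime$ is equivalent to $\phi_f(\Phi(F)) \leq 1$, strict positivity being automatic. The first step is to unwind Definition \ref{psi_define}: for a semistable $G \in \s{A}$ the class $[[G]]$ always lies in $\coimage(Z)_{eff} \cap \coimage(Z)_{comp}$ with $\phi_f([[G]]) = \phi(G) \in (0,1]$, while its negative $-[[G]]$ lies in $\coimage(Z)_{comp}$ (hence in the intersection) exactly when $Z([[\Phi]][[G]]) = Z(\Phi([G])) \in \mathbb{H}^\prime$, in which case $\phi_f(-[[G]]) = \phi(G)+1 \in (1,2]$. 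Because the positive classes contribute only phases $\leq 1$, the supremum $\omega$ is governed by these negative classes, yielding the clean description $\psi = \omega - 1 = \sup\{\phi(G) : G \in \s{A} \text{ semistable},\ Z(\Phi([G])) \in \mathbb{H}^\prime\}$.

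With this description in hand, I would call a semistable $G \in \s{A}$ \emph{good} when $Z(\Phi([G])) \in \mathbb{H}^\prime$ (equivalently $\phi_f(\Phi(G)) \leq 1$) and \emph{bad} otherwise. Since $\Phi(\s{A})$ is concentrated in degrees $0$ and $-1$ by Condition \ref{cohomological_condition}, the hypothesis $\coh{-2}(\Phi(-)) = 0$ of Lemma \ref{semistable_phases} is automatic for objects of $\s{A}$, so that lemma applies and shows $G \mapsto \phi_f(\Phi(G))$ is order preserving on the semistable objects of $\s{A}$. This immediately separates good from bad phases: if $G$ is good and $H$ is bad, then $\phi(H) \leq \phi(G)$ would force $\phi_f(\Phi(H)) \leq \phi_f(\Phi(G)) \leq 1$, contradicting badness, so every good phase is strictly below every bad phase. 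Now let $F$ be semistable with $\phi(F) \leq \psi$. Using that $\psi$ is the supremum of the good phases, I would choose a good $G$ with $\phi(G) \geq \phi(F)$; the order-preserving inequality $\phi_f(\Phi(F)) \leq \phi_f(\Phi(G)) \leq 1$ then gives $Z(\Phi([F])) \in \mathbb{H}^\prime$, as desired.

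The one delicate point — and the step I expect to be the main obstacle — is producing a good $G$ with $\phi(G) \geq \phi(F)$ in the boundary case $\phi(F) = \psi$, i.e. the realization of the extremal phase by an actual good semistable object rather than merely as a supremum. When $\phi(F) < \psi$ this is immediate from the definition of the supremum, and the strict part of Lemma \ref{semistable_phases} closes the argument with room to spare; moreover $\phi_f(\Phi(F)) < 1$ places $Z(\Phi([F]))$ even in $\mathbb{H}$. For $\phi(F) = \psi$ one needs the supremum defining $\omega$ to be attained, which is where the local finiteness of $(Z,\s{P})$ (and, in all the applications, the finite rank of $\coimage(Z)$) enters: it guarantees $\omega = \phi_f(v)$ for some comparable effective $v$, hence $\psi = \phi(G_0)$ for a good $G_0$, and the equality clause of Lemma \ref{semistable_phases} applied to $F$ and $G_0$ gives $\phi_f(\Phi(F)) = \phi_f(\Phi(G_0)) \leq 1$. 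I would therefore organize the write-up as: (i) the dictionary between $Z(\Phi([F])) \in \mathbb{H}^\prime$ and $\phi_f(\Phi(F)) \leq 1$; (ii) the identification of $\psi$ with the supremal good phase; (iii) the good/bad separation via Lemma \ref{semistable_phases}; and (iv) the attainment of $\psi$, which disposes of the equality case.
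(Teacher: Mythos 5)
The paper itself offers no proof of this lemma (it is asserted as an immediate consequence of ``strict preservation of the ordering''), so your write-up is measured against the statement rather than against an argument in the text. Your steps (i)--(iii) are correct and are the right formalization of what the paper leaves implicit: the dictionary $Z(\Phi([F]))\in\mathbb{H}^\prime \Leftrightarrow \phi_f(\Phi(F))\le 1$, the identification $\psi=\sup\{\phi(G): G\ \text{semistable},\ Z(\Phi([G]))\in\mathbb{H}^\prime\}$ (valid since $\psi>0$), and the good/bad separation via Lemma \ref{semistable_phases}, whose hypothesis $\coh{-2}(\Phi(-))=0$ is indeed automatic on $\s{A}$ once $M=0$. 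For $\phi(F)<\psi$ your argument is complete.

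The gap is the attainment claim in step (iv). Local finiteness of $(Z,\s{P})$ does \emph{not} guarantee that the supremum defining $\omega$ is realized by a comparable effective class, and neither does finite rank of $\coimage(Z)$: local finiteness only bounds lengths inside thin slices $\s{P}((t-\epsilon,t+\epsilon))$ and does not make the set of phases closed or discrete --- already for slope stability on a smooth elliptic curve (locally finite, $\coimage(Z)$ of rank $2$) the phases of semistable objects are dense in $\mR$. What actually rescues the boundary case in the paper's applications is a different mechanism, which your proof never articulates: there $[[\Phi]]$ extends to an orientation-preserving $\mR$-linear map $T$ of $\mC$, so ``good'' is a \emph{sector} condition, namely membership in $\mathbb{H}^\prime\cap T^{-1}(\mathbb{H}^\prime)$; the separation property forces this sector to have the form $\{z\in\mathbb{H}^\prime : \phi_f(z)\le s\}$ with its top edge $T^{-1}(\mR^{<0})$ included (because $\mR^{<0}\subset\mathbb{H}^\prime$), and since $\psi\le s$, \emph{every} effective class of phase $\le\psi$ lies in the sector and is good --- no attainment of the supremum is needed, and indeed it can fail (the edge ray need contain no effective classes), in which case your argument produces no $G_0$ at all. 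Without such linearity of $[[\Phi]]$, which is not among the hypotheses of Theorem \ref{stable_preserving}, neither your attainment claim nor this sector argument applies, so the case $\phi(F)=\psi$ --- which is genuinely needed downstream, since $\s{A}_0=\s{P}(0,\psi]$ contains objects of phase exactly $\psi$ --- is left open. In short: your reduction to the boundary case is right, but the step you flagged as ``the main obstacle'' is resolved by an incorrect appeal to local finiteness rather than by a valid argument, and this is precisely the point the paper also glosses over.
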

\begin{proof}
This is a consequence of the definitions and Lemma \ref{semistable_phases} below: by definition there exists a semistable $G$ with $[[G]] \in \image(Z)_{eff\cap comp}$,  $\phi(F) \leq \phi(G) \leq m_{\sA}$, and $Z([\Phi(G)]) \in \mathbb{H}^\prime$.  Lemma \ref{semistable_phases} then gives the lemma.
\end{proof}

\begin{lemma}
\label{semistable_phases}
Let $F, G \in \mr{Obj}(\s{A} \cup \s{A}[1])$ be semistable with $\coh{-2}(\Phi(G)) = 0$.  Then $\phi(F) $ {\tiny $\stackrel{<}{(=)}$} $ \phi(G)$ implies $\phi_f(\Phi(F)) $ {\tiny $\stackrel{<}{(=)}$} $ \phi_f(\Phi(G))$.
\end{lemma}
\begin{proof}
By definition, $[[F]], [[G]] \in \image(Z)_{eff}$.  The choice of log branch ensures the fake and real phases of $F$ and $G$ are equal.  Assume that $F, G \in \sA$.   Then $[[F]], [[G]] \in \image(Z)_{comp}$ and applying C3 gives the result.

In the case that $F \in \sA$ and $G \in \sA[1]$, we first note that  C1 and the assumption $\coh{-2}(\Phi(G)) = 0$ imply that $Z([G[-1]]) \in \mathbb{H}^\prime$ and $Z([\Phi]([G[-1]])) \in \mathbb{H}^\prime$.  This shows that $[[G]] \in \image(Z)_{comp}$.  Applying C3 gives the result.  

Lastly, let $F, G \in \sA[1]$.  We must show that $\phi(F) \leq \phi(G)$ ensures $[[F]] \in \image(Z)_{comp}$.  The case above applies to $F[-1]$ and $G[-1]$, showing $\phi_f(\Phi(F[-1])) \leq \phi_f(\Phi(G[-1])$.  Since $[[G]] \in \image(Z)_{comp}$, $\phi_f(\Phi(G)) \leq 1$.  This is enough to show that $Z([F[-1]]) \in \mathbb{H}^\prime$ and $Z([\Phi]([F[-1]])) \in \mathbb{H}^\prime$.  Therefore, $[[F]] \in \image(Z)_{comp}$ and applying C3  gives the result.
\end{proof}




Now that we have defined $m$ we will prove a series of lemmas. The first is adapted from \cite{Ruiperez_Lopez}.
\begin{lemma}
\label{region_pure}
Let $F \in \s{A}$,
\begin{enumerate}
\item if $\coh{i}(\Phi(F)) = 0$ for $i \neq -1$ then $m < \phi_f(F) \leq 1$.
\item if $\coh{i}(\Phi(F)) = 0$ for $i \neq 0$ then $0 < \phi_f(F) \leq m$.
\end{enumerate}
\end{lemma}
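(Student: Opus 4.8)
The plan is to prove both parts by contradiction, in each case transferring the cohomological purity of $\Phi(F)$ to an extremal Harder--Narasimhan factor of $F$ and then contradicting the resulting phase bound. The two halves are dual: part (2) uses the maximal factor $A^+$ together with the defining property of $\omega$, while part (1) uses the minimal factor $A^-$ together with Lemma \ref{psi_property}. The one reusable ingredient I would record first is convexity of the fake phase: if $F \in \s{A}$ has Harder--Narasimhan factors with phases in $(0,1]$, then $Z([F]) = \sum_j Z([A_j])$ is a sum of vectors whose arguments all lie in the arc $[\pi\phi^-, \pi\phi^+]$ of length $\le \pi$, so $\phi^- \le \phi_f(F) \le \phi^+$. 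In particular the bounds $\phi_f(F) \le 1$ in (1) and $0 < \phi_f(F)$ in (2) are automatic from $F \in \s{A}$, and only the inequalities involving $\psi$ carry content.

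For (2), assume $\Phi(F) \in \s{A}$ and suppose for contradiction $\phi_f(F) > \psi$, so that $\phi^+ \ge \phi_f(F) > \psi$ by convexity. I would apply $\Phi$ to the triangle $\Phi(A^+) \to \Phi(F) \to \Phi(F/A^+) \xrightarrow{+1}$ expressing $A^+$ as the maximal destabilizing subobject, where $A^+, F/A^+ \in \s{A}$ so that both outer terms are concentrated in degrees $\{0,-1\}$ by Condition \ref{cohomological_condition}. In the long exact cohomology sequence the segment $\coh{-2}\Phi(F/A^+) \to \coh{-1}\Phi(A^+) \to \coh{-1}\Phi(F)$ has both outer terms zero, forcing $\coh{-1}\Phi(A^+) = 0$ and hence $\Phi(A^+) \in \s{A}$; thus $Z(\Phi([A^+])) \in \mathbb{H}^\prime$. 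Setting $v = [[A^+]]$, we then have $Z(v) \in \mathbb{H}^\prime$ and $Z([[\Phi]]v) \in \mathbb{H}^\prime$, so $-v \in \coimage(Z)_{comp}$, while semistability of $A^+$ gives $-v \in \coimage(Z)_{eff}$. The defining property of $\omega$ now yields $\phi(A^+) + 1 = \phi_f(-v) \le \omega$, i.e. $\phi(A^+) \le \psi$, contradicting $\phi^+ > \psi$.

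For (1), assume $\Phi(F) \in \s{A}[1]$ and suppose $\phi_f(F) \le \psi$, so $\phi^- \le \phi_f(F) \le \psi$ by convexity. Applying $\Phi$ to the triangle $\Phi(E_{n-1}) \to \Phi(F) \to \Phi(A^-) \xrightarrow{+1}$ expressing $A^-$ as the lowest Harder--Narasimhan quotient (again with outer terms in $\s{A}$, hence in degrees $\{0,-1\}$), the segment $\coh{0}\Phi(F) \to \coh{0}\Phi(A^-) \to \coh{1}\Phi(E_{n-1})$ has vanishing outer terms, so $\coh{0}\Phi(A^-) = 0$ and $\Phi(A^-) \in \s{A}[1]$, giving $Z(\Phi([A^-])) \in -\mathbb{H}^\prime$. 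But $A^-$ is semistable with $\phi(A^-) \le \psi$, so Lemma \ref{psi_property} forces $Z(\Phi([A^-])) \in \mathbb{H}^\prime$, and $\mathbb{H}^\prime \cap (-\mathbb{H}^\prime) = \emptyset$ gives the contradiction.

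I expect the genuine obstacle to be the bookkeeping in part (2): confirming that $-[[A^+]]$ really lies in $\coimage(Z)_{eff} \cap \coimage(Z)_{comp}$ so that the maximality of $\omega$ may be invoked, and that with the log branch discontinuous on $\mathbb{R}^{>0}$ one indeed has $\phi_f(-[[A^+]]) = \phi(A^+) + 1$. By contrast the long-exact-sequence steps are routine once the correct extremal triangle is chosen --- the top factor as a subobject in (2), the bottom factor as a quotient in (1) --- and the semistable case $A^- = A^+ = F$ reduces each statement directly to its conclusion with no filtration needed.
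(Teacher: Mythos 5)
Your proposal is correct and takes essentially the same route as the paper: part (1) uses the identical Harder--Narasimhan triangle $F^- \to F \to A^-$, the same long exact sequence with the outer vanishing supplied by Condition 1, and Lemma \ref{psi_property}, with the contradiction merely run in the opposite direction (you deduce $\coh{0}\Phi(A^-) = 0$ and contradict Lemma \ref{psi_property}, while the paper deduces $\coh{0}\Phi(A^-) \neq 0$ from Lemma \ref{psi_property} and contradicts purity of $\Phi(F)$). Your part (2), taking $A^+$ as the maximal destabilizing subobject and invoking the defining property of $\omega$ on $-[[A^+]]$, is precisely the dual argument the paper leaves implicit with ``the second is shown by similar methods.''
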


\begin{proof}
We will prove the first statement; the second is shown by similar methods.  Suppose the contrary: $\coh{i}(\Phi(F)) = 0$ for $i \neq -1$ yet  $0 < \phi_f(F) \leq m$.  The Harder-Narasimhan filtration of $F$ gives a short exact sequence
\begin{displaymath}
\shortexactseq{F_-}{F}{A_-(F)}
 \end{displaymath}
\noindent where $0 < \phi(A_{-}(F)) \leq m$.  The first inequality follows from the fact that $F \in \s{A}$.
Applying $\Phi$ gives a long exact sequence
\begin{align*}
\xymatrix@C=.95em{
\ldots \ar[r] & \coh{-1}(\Phi(F)) \ar[r] & \coh{-1}(\Phi(A_{-}(F))) \ar[r] &  \coh{0}(\Phi(F_{-}))\ar[r] &  & \ \qquad
} \\
\xymatrix@C=.95em{\ &\coh{0}(\Phi(F)) \ar[r] & \coh{0}(\Phi(A^{-}(F))) \ar[r] & 0}
\end{align*}
where the last $0$ is due to C1.

By Lemma \ref{psi_property}, we have $Z([A_{-}(F)]) \in \mathbb{H}^\prime$ and a semistable factor $A^\pr = A_{-}(\Phi(A_{-}(F)))$ with $\phi(A^\pr) \leq 1$ guaranteeing that $\coh{0}(\Phi(A_{-}(F))) \neq 0$.  The exactness of the above sequence then shows $\coh{0}(\Phi(F)) \neq 0$,  giving us our desired contradiction.
\end{proof}

For an object $F \in \s{A}$, the condition $0 < \phi_f(F) \leq m$ is coarser than $F \in \s{P}(0, m]$.  This allows us to work with objects without assuming membership in $\sP(0, m]$ or $\sP(m, 1]$.

\begin{lemma}[\cite{Ruiperez_Lopez}]
\label{exact_sequence}
Let $F \in \s{A}$.  Then there exists a short exact sequence
 \begin{displaymath}
\shortexactseq{\Phi^{-1}(\coh{-1}(\Phi(F))[1])}{F}{\Phi^{-1}(\coh{0}(\Phi(F)))}
\end{displaymath}
of objects in $\s{A}$.
\end{lemma}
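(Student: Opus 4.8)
The plan is to produce the short exact sequence by truncating $\Phi(F)$ with respect to the original t-structure and transporting the result back through $\Phi^{-1}$. First I would invoke Condition \ref{cohomological_condition} in the reduced form $M = 0$, so that $\coh{i}(\Phi(F)) = 0$ for $i \neq -1, 0$; equivalently $\Phi(F) \in \s{D}^{\leq 0} \cap \s{D}^{\geq -1} =: \s{D}^{[-1,0]}$. The canonical truncation triangle of an object concentrated in degrees $-1$ and $0$ reads
$$\coh{-1}(\Phi(F))[1] \longrightarrow \Phi(F) \longrightarrow \coh{0}(\Phi(F)) \longrightarrow \coh{-1}(\Phi(F))[2],$$
since $\tau^{\leq -1}\Phi(F) = \coh{-1}(\Phi(F))[1]$ and $\tau^{\geq 0}\Phi(F) = \coh{0}(\Phi(F))$. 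Applying the exact autoequivalence $\Phi^{-1}$, which sends $\Phi(F)$ to $F$, produces a triangle in $\s{T}$ with middle term $F$ and outer terms exactly $\Phi^{-1}(\coh{-1}(\Phi(F))[1])$ and $\Phi^{-1}(\coh{0}(\Phi(F)))$. Since a triangle all three of whose vertices lie in the heart $\s{A}$ is automatically a short exact sequence in $\s{A}$, the whole problem reduces to showing these two outer terms lie in $\s{A}$.

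The key observation I would isolate is that, because $\Phi(\s{A}) \subset \s{D}^{[-1,0]}$ by Condition \ref{cohomological_condition}, the heart $\s{B} := \Phi(\s{A})$ is a \emph{tilt} of $\s{A}$ at a torsion pair $(\s{T}, \s{F})$, with $\s{T} = \s{A} \cap \s{B}$ and $\s{F} = \s{A} \cap \s{B}[-1]$. In such a tilt every object $X \in \s{B}$ sits in a canonical short exact sequence $0 \to \coh{-1}(X)[1] \to X \to \coh{0}(X) \to 0$ of objects of $\s{B}$, namely its torsion/torsion-free decomposition for the induced torsion pair $(\s{F}[1], \s{T})$; here $\coh{0}(X) \in \s{T} \subset \s{B}$ and $\coh{-1}(X)[1] \in \s{F}[1] \subset \s{B}$. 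Taking $X = \Phi(F) \in \s{B}$, this shows that both $\coh{-1}(\Phi(F))[1]$ and $\coh{0}(\Phi(F))$ themselves lie in $\s{B} = \Phi(\s{A})$, and that the truncation triangle above is in fact a short exact sequence in $\s{B}$.

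To finish, I would note that $\Phi^{-1}$ restricts to an exact equivalence of abelian categories $\s{B} = \Phi(\s{A}) \xrightarrow{\sim} \s{A}$, so it carries the short exact sequence $0 \to \coh{-1}(\Phi(F))[1] \to \Phi(F) \to \coh{0}(\Phi(F)) \to 0$ in $\s{B}$ to the asserted sequence $0 \to \Phi^{-1}(\coh{-1}(\Phi(F))[1]) \to F \to \Phi^{-1}(\coh{0}(\Phi(F))) \to 0$ in $\s{A}$, with the stated sub/quotient roles (the torsion part $\coh{-1}(\Phi(F))[1]$ giving the sub, the torsion-free part $\coh{0}(\Phi(F))$ the quotient).

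The step I expect to be the main obstacle is the middle paragraph: verifying that $\s{B} = \Phi(\s{A})$ genuinely is a tilt of $\s{A}$ and that the $\s{A}$-cohomology objects of an element of $\s{B}$ land back in $\s{B}$. This is standard t-structure theory --- every heart squeezed into $\s{D}^{[-1,0]}$ relative to a reference heart arises by Happel--Reiten--Smal\o{} tilting, and the $\s{A}$-truncation of an object of the tilt recovers precisely its torsion decomposition there --- but it is the one place where genuine categorical input beyond Condition \ref{cohomological_condition} enters, so I would state it carefully rather than treat it as a formality. It is worth emphasizing that only Condition \ref{cohomological_condition} is needed for this structural lemma; Conditions (2) and (3) play no role here and enter only in the order-preserving arguments.
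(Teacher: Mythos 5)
Your proof is correct, uses only Condition \ref{cohomological_condition} (as the paper's proof does), and shares the same skeleton: truncate $\Phi(F)$ in the original t-structure and carry the triangle back through $\Phi^{-1}$. The difference lies in where heart-membership of the two outer terms is checked. The paper never introduces $\s{B} = \Phi(\s{A})$: it applies $\Phi^{-1}$ to the truncation triangle immediately and chases the long exact sequence of $\s{A}$-cohomology, using that Condition \ref{cohomological_condition} forces $\coh{k}(\Phi^{-1}(A)) = 0$ for $k \neq 0, 1$ when $A \in \s{A}$; this both kills the end terms (giving exactness with middle term $F$) and, on a second pass through the same sequence, shows that $\Phi^{-1}(\coh{-1}(\Phi(F))[1])$ and $\Phi^{-1}(\coh{0}(\Phi(F)))$ are concentrated in degree $0$. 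You work upstairs instead, invoking the Happel--Reiten--Smal\o{} tilting formalism to conclude that the truncation pieces of $\Phi(F)$ already lie in $\s{B}$, and then transport the entire short exact sequence in one step. Your route is more conceptual --- it names the structural reason the lemma holds (a heart squeezed into $\s{D}^{\leq 0} \cap \s{D}^{\geq -1}$ is a tilt, and $\s{A}$-truncation realizes the torsion decomposition) --- while the paper's is a self-contained diagram chase. Note, however, that the two write-ups defer exactly the same content at exactly the same spot: the paper's unproved assertion that ``the assumptions on $\Phi$'' control $\Phi^{-1}$, and your appeal to standard t-structure theory, each come down to the orthogonality argument that $\Phi(\s{D}^{\leq 0}) \subset \s{D}^{\leq 0}$ and $\Phi(\s{D}^{\geq 0}) \subset \s{D}^{\geq -1}$ imply $\Phi^{-1}(\s{A}) \subset \s{D}^{\geq 0} \cap \s{D}^{\leq 1}$; spelling that two-line computation out would make either version fully rigorous.
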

\begin{proof}
We will include our own proof.  Let $G := \Phi(F)$.  We have the triangle
\begin{displaymath}
\xymatrix@C=.2em@R=.9em{\coh{-1}(G)[1]   & \ar[rr]&&& G \ar[dll] \\
		&& \coh{0}(G) \ar@{-->}[ull] &&}
\end{displaymath}
coming from the t-structure.  Applying $\Phi^{-1}$ results in the long exact sequence:
\begin{align*}
\xymatrix@C=.9em{
\ldots \ar[r] & \coh{-1}(\Phi^{-1}(\coh{0}(G))) \ar[r] & \coh{0}(\Phi^{-1}(\coh{-1}(G)[1])) \ar[r] &  F \ar[r] &  & \ \qquad
} \\
\xymatrix@C=.9em{\ &\coh{0}(\Phi^{-1}(\coh{0}(G))) \ar[r] & \coh{1}(\Phi^{-1}(\coh{-1}(G)[1])) \ar[r] & \ldots}
\end{align*}
C1 ensures $\coh{k}(\Phi^{-1}(\coh{-1}(G)[1])) = 0$ for $k \neq 0, -1$ and $\coh{k}(\Phi^{-1}(\coh{0}(G))) = 0$ for $k \neq 0, 1$.  Thus the two end groups in this sequence vanish.

We are left showing
\begin{align*}
 \coh{0}(\Phi^{-1}(\coh{-1}(G)[1])) \cong \Phi^{-1}(\coh{-1}(G)[1]) \\
\coh{0}(\Phi^{-1}(\coh{0}(G))) \cong \Phi^{-1}(\coh{0}(\Phi(F))).
\end{align*}
The first isomorphism is a consequence of $\coh{-1}(\Phi^{-1}(\coh{-1}(G)[1])) = 0$, which can be seen from further up on the same sequence:
\begin{displaymath}
 \xymatrix@C=.9em{
 \coh{-2}(\Phi^{-1}(\coh{0}(G)) \ar[r] & \coh{-1}(\Phi^{-1}(\coh{-1}(G)[1]))  \ar[r] & \coh{-1}(F)
 }
\end{displaymath}
Clearly the left and right groups are zero, thus giving the result.  A similar argument holds for the case of $\coh{0}(\Phi^{-1}(\coh{0}(G))) \cong \Phi^{-1}(\coh{0}(\Phi(F)))$, concluding the proof.
\end{proof}

We will now analyze how $\Phi$ acts on semistable objects of $\s{A}$.
\begin{lemma}
\label{preserves_sheaves}
Let $F \in \s{A}$ be semistable. Then, $\Phi(F)$ is cohomologically pure, i.e.,  $\coh{i}(\Phi(F)) = 0$ for all but one integer.
\end{lemma}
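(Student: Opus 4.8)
The plan is to argue by contradiction, playing the short exact sequence of Lemma \ref{exact_sequence} against the phase bounds of Lemma \ref{region_pure}. Since we have reduced to the case $M = 0$, Condition \ref{cohomological_condition} already gives $\coh{i}(\Phi(F)) = 0$ for $i \neq -1, 0$, so the statement ``$\coh{i}(\Phi(F)) = 0$ for all but one integer'' amounts to showing that $\coh{-1}(\Phi(F))$ and $\coh{0}(\Phi(F))$ cannot both be nonzero. I would therefore assume both are nonzero and derive a contradiction with the semistability of $F$. (If either vanishes, $\Phi(F)$ is automatically concentrated in a single degree and there is nothing to prove.)

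First I would apply Lemma \ref{exact_sequence} to $F$, producing the short exact sequence $0 \to A \to F \to B \to 0$ in $\s{A}$ with $A := \Phi^{-1}(\coh{-1}(\Phi(F))[1])$ and $B := \Phi^{-1}(\coh{0}(\Phi(F)))$. Because $\Phi^{-1}$ is an equivalence and both cohomology objects were assumed nonzero, $A$ and $B$ are nonzero; hence $A$ is a proper nonzero subobject of $F$ and $B$ a proper nonzero quotient. By construction $\Phi(A) = \coh{-1}(\Phi(F))[1]$ is concentrated in cohomological degree $-1$, and $\Phi(B) = \coh{0}(\Phi(F))$ is concentrated in degree $0$, so each of $A$ and $B$ satisfies the purity hypothesis of one clause of Lemma \ref{region_pure}.

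The key step is then to read off fake phases. Lemma \ref{region_pure}(1) applied to $A$ gives $\psi < \phi_f(A) \leq 1$, while Lemma \ref{region_pure}(2) applied to $B$ gives $0 < \phi_f(B) \leq \psi$; combining these yields $\phi_f(B) \leq \psi < \phi_f(A)$. On the other hand, $F$ is semistable, so with respect to the stability function every subobject has fake phase at most $\phi_f(F)$ and every quotient has fake phase at least $\phi_f(F)$; applied to the sequence $0 \to A \to F \to B \to 0$ this forces $\phi_f(A) \leq \phi_f(F) \leq \phi_f(B)$, i.e. $\phi_f(A) \leq \phi_f(B)$. This contradicts $\phi_f(A) > \phi_f(B)$, so in fact one of $\coh{-1}(\Phi(F))$, $\coh{0}(\Phi(F))$ vanishes, giving the claim.

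The delicate points are bookkeeping rather than genuine obstacles: one must note that $\phi_f$ is well defined on the possibly unstable objects $A$ and $B$, which holds because nonzero objects of the heart $\s{A}$ satisfy $Z \in \mathbb{H}^\prime$ and hence $Z \neq 0$; and one must use precisely the characterization of semistability as the comparison of fake phases of subobjects and quotients, so that the final contradiction is legitimate. The step I expect to carry the most weight is Lemma \ref{region_pure}, which is what converts the cohomological purity of $\Phi(A)$ and $\Phi(B)$ into an inequality separated by $\psi$; once that separation is in hand, the clash with semistability is immediate.
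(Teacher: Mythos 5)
Your proof is correct and uses exactly the paper's ingredients: the short exact sequence of Lemma \ref{exact_sequence}, the phase bounds of Lemma \ref{region_pure}, and the subobject/quotient characterization of semistability via $\phi_f$. The only difference is organizational: the paper splits into the cases $0 < \phi(F) \leq \psi$ and $\psi < \phi(F) \leq 1$, using in each case just one of the two objects (the subobject $\Phi^{-1}(\coh{-1}(\Phi(F))[1])$ or the quotient $\Phi^{-1}(\coh{0}(\Phi(F)))$) against the assumed location of $\phi(F)$, whereas you apply both clauses of Lemma \ref{region_pure} simultaneously and contradict the see-saw inequality $\phi_f(A) \leq \phi_f(F) \leq \phi_f(B)$ -- a slight streamlining of the same argument, not a different route.
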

\begin{proof}
We separate the proof into two cases.

$0 < \phi(F) \leq m$:  Suppose that both $\coh{0}(\Phi(F)) \neq 0$ and $\coh{-1}(\Phi(F)) \neq 0$.  The exact sequence in Lemma \ref{exact_sequence}  shows that $\Phi^{-1}(\coh{-1}(\Phi(F))[1])$ is a subsheaf of $F$ (and non-zero by assumption). $F$ being semistable, combined with its assumed phase ensures $\phi_f(\Phi^{-1}(\coh{-1}(\Phi(F))[1])) \leq m$.  However, $\Phi^{-1}(\coh{-1}(\Phi(F))[1])$ satisfies Lemma \ref{region_pure}(1),  showing $\phi_f(\Phi^{-1}(\coh{-1}(\Phi(F))[1])) > m$, a contradiction.

$m < \phi(A) \leq 1$:  Again, suppose the contrary.  Then $\Phi^{-1}(\coh{0}(\Phi(F)))$ is a quotient sheaf of $F$, showing $\phi_f(\Phi^{-1}(\coh{0}(\Phi(F)))) > m$.  However, $\Phi^{-1}(\coh{0}(\Phi(F)))$ satisfies Lemma \ref{region_pure}(2), giving  $\phi_f(\Phi^{-1}(\coh{0}(\Phi(F)))) \leq m$, a contradiction.
\end{proof}
Proposition \ref{membership_please} is an easy consequence of Lemma \ref{preserves_sheaves} since it is generated from semistable elements through extension.
\begin{proof}[Proof of Proposition \ref{membership_please}]
Let $\sA^0 = \sP(0,m]$ and $\sA^1 := \sP(m, 1]$.  Assume $F \in \s{A}^{i}, i \in \{0, 1\}$.  Let $A_j(F)$ denote the semistable objects in the Harder-Narasimhan filtration of $F$.   By definition of $\s{A}^i$,  $A_j(F) \in \s{A}^i$.  Lemma \ref{preserves_sheaves} and Lemma \ref{region_pure} imply that $\Phi(A_j(F))$ is cohomologically pure and are concentrated in degree $-i$ for all $j$ (since $-0$ is $0$).  Since $\s{A}$ and $\s{A}[-1]$ are extension closed,  we have that $\Phi(F)$ is concentrated in degree $-i$, completing the proof.
\end{proof}

From Lemma \ref{preserves_sheaves}, we know that $\Phi(F)$ is cohomologically pure. However, this t-structure was arbitrary in the family of t-structures generated by $\stab$.  Since semistable objects are the objects that are cohomologically pure in \textit{every} t-structure generated from $\stab$, one can show that $\Phi(F)$ is semistable if $\Phi$ satisfies our conditions for all generated t-structures.  This motivates the proof of our main theorem.  Before stating the next theorem, we recall the definition of two key subsets of $\image(Z)$:
\begin{displaymath}
\image(Z)_{eff} =  \{\pm v \;| \;\exists \textrm{ s.s. } G \in \s{A} \textrm{ with } Z([G]) = v\}.
\end{displaymath}
\begin{displaymath}
\image(Z)_{comp} =  \{ v | \; Z(v) \in \mathbb{H}^\prime \}  \cup \{\pm v |\;  Z(v) \in \mathbb{H}^\prime \textrm{ and } Z([[\Phi]]v) \in \mathbb{H}^\prime\}
\end{displaymath}

\begin{theorem}
\label{stable_preserving}
Let $\Phi \in \mr{Aut}(\s{T})$.  Given $(Z, \s{P}) \in \mr{Stab}(\s{T})$, a locally finite stability condition, one has the t-structure $\s{D}^{\leq 0} = \s{P}(0, \infty)$, with heart $\s{A}$.
If $\Phi$ satisfies
\begin{enumerate}
\item[(i)] $\Phi(\s{A}) \subset \s{D}^{\leq M} \cap \s{D}^{\geq M -1}$, $M \in \mZ$.
\item[(ii)] $\Phi$ descends to an automorphism $[[\Phi]]$ of $\image(Z) = K(\s{T})/\ker(Z)$.
\item[(iii)]  for $v,w \in \image(Z)_{eff \cap comp}$, the relation $\phi_f(v)$ {\tiny $\stackrel{<}{(=)}$} $\phi_f(w)$ implies $\phi_f([[\Phi]](v))$ {\tiny $\stackrel{<}{(=)}$} $\phi_f([[\Phi(G)]](w))$; here $\phi_f$ is the ``implied phase" obtained from a log branch.
\end{enumerate}
then $\Phi$ is compatible with $\stab$.
\end{theorem}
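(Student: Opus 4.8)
The plan is to bootstrap from cohomological purity in the single heart $\mathcal{A}=\mathcal{P}(0,1]$, which Lemma~\ref{preserves_sheaves} already supplies, to purity in \emph{every} heart of the stability condition, and then invoke the characterisation of semistable objects as precisely those that are pure in every such heart. Concretely, a nonzero $E$ is semistable if and only if for each $a\in\mathbb{R}$ it is concentrated in a single cohomological degree of the t-structure with heart $\mathcal{P}(a,a+1]$: if $E$ has Harder--Narasimhan factors of top phase $\phi^{+}$ and bottom phase $\phi^{-}<\phi^{+}$, then choosing $a$ just below $\phi^{-}$ with $a+1<\phi^{+}$ (possible precisely because the factors are genuinely separated) places the top factor in degree $-1$ and the bottom factor in degree $0$, so $E$ is impure. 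First I would, as in the preamble, use that the shift is strongly compatible (Proposition~\ref{properties}) to reduce to $M=0$, fixing the log branch discontinuous on $\mathbb{R}^{>0}$ with $\phi_f(\mathcal{A})\subset(0,1]$.

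Next I would run the apparatus of \S3 simultaneously for every rotated heart. For $a\in\mathbb{R}$ let $\sigma_a=(Z,\mathcal{P})\cdot g_a$ be the stability condition of Lemma~\ref{groupactions} whose heart is $\mathcal{A}_a:=\mathcal{P}(a,a+1]$; its semistable objects coincide with those of $(Z,\mathcal{P})$, only relabelled. I claim conditions (i)--(iii) hold for each $\sigma_a$. Condition (ii) is immediate, since the $\widetilde{\mathrm{GL}}^{+}(2,\mathbb{R})$-action leaves $\ker Z$, hence $\coimage(Z)$ and the induced $[[\Phi]]$, unchanged. For condition (i), Proposition~\ref{membership_please} together with Lemma~\ref{preserves_sheaves} assigns to each semistable object of phase $\theta$ a single cohomological degree $d(\theta)$; the function $d$ is non-increasing (by the order preservation of Lemma~\ref{semistable_phases}) and satisfies $d(\theta+1)=d(\theta)-1$, so on any half-open unit interval it takes at most two consecutive integer values. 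Hence $\Phi(\mathcal{A}_a)$ is confined to two consecutive degrees and condition (i) holds with some $M=M(a)$. With (i)--(iii) secured, Lemma~\ref{preserves_sheaves} applied to $\sigma_a$ forces every semistable $F$ to have $\Phi(F)$ pure in the heart $\mathcal{A}_a$.

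Letting $a$ range over all of $\mathbb{R}$, $\Phi(F)$ is then pure in every heart, so by the characterisation above $\Phi(F)$ is semistable; thus $\Phi$ is semistable preserving. It remains to verify the two ordering clauses of Definition~\ref{compatibility}. Since $\Phi(F)$ is now known to be semistable, its implied phase $\phi_f(\Phi(F))$ agrees with its genuine phase, so Lemma~\ref{semistable_phases} (whose hypothesis $\coh{-2}(\Phi(G))=0$ is automatic for $G\in\mathcal{A}$ once $M=0$) already gives strict order preservation for semistable $F,G\in\mathcal{A}$. The assignment $\theta\mapsto\rho(\theta)$ determined by $\Phi(\mathcal{P}(\theta))\subset\mathcal{P}(\rho(\theta))$ is therefore strictly increasing on $(0,1]$ and satisfies $\rho(\theta+1)=\rho(\theta)+1$ because $\Phi$ commutes with the shift; these two properties make $\rho$ strictly increasing on all of $\mathbb{R}$, which is exactly Definition~\ref{compatibility} for arbitrary semistable pairs. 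This establishes compatibility.

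The main obstacle is the claim, buried in the second paragraph, that condition (iii) transfers to every $\sigma_a$. Conditions (i) and (ii) transfer almost formally, but (iii) is stated relative to the log branch and to the heart-dependent set $\coimage(Z)_{comp}$, and the branch's discontinuity is exactly the locus where the before/after orderings cannot be compared. The real content is that $[[\Phi]]$, viewed through $Z$ as a map on $\image(Z)\subset\mathbb{C}$, preserves the \emph{cyclic} order of arguments, so that the order preservation verified in the single window for $\sigma_0$ propagates to the rotated window of each $\sigma_a$; the delicate point is to confirm that the effective, comparable classes for $\sigma_a$ never straddle the rotated discontinuity in a way that invalidates the comparison. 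I expect this verification---rather than the formal purity-to-semistability bootstrap---to absorb most of the work, with the local finiteness hypothesis entering precisely here to reduce the continuum of hearts $\sigma_a$ to finitely many genuinely distinct cases.
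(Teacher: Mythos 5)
Your skeleton is the same as the paper's: rotate the stability condition, show that hypotheses (i)--(iii) persist for every rotated heart, apply Lemma \ref{preserves_sheaves} in each one, and conclude semistability from purity in all hearts. But the two steps you treat as routine or defer are where the entire content of the paper's proof lies, and your argument for the first of them fails as written. Concretely, for the transfer of condition (i): knowing that each semistable $F \in \s{P}(a,a+1]$ has $\Phi(F)$ pure with respect to the \emph{original} t-structure, with the original degree taking two consecutive values $j_a$, $j_a+1$ as $\phi(F)$ runs over $(a,a+1]$, does not confine $\Phi(\s{P}(a,a+1])$ to two consecutive degrees of the \emph{rotated} t-structure, which is what condition (i) for $\sigma_a$ asserts. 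An object of $\s{A}[j]=\s{P}(j,j+1]$ has $\sigma_a$-cohomology exactly in the integers lying in the open interval $(a-j-1,\,a-j+1)$; taking the union over $j\in\{j_a,\,j_a+1\}$ gives the integers in $(a-j_a-2,\,a-j_a+1)$, which is \emph{three} consecutive degrees whenever $a-j_a\notin\mZ$. Ruling out the third degree requires correlating $\phi(F)$ with the implied phase $\phi_f(\Phi(F))$, i.e.\ the full strength of (iii), not just the degree function $d(\theta)$. This is exactly what the paper does via the quantity $\gamma_-$ (the best lower bound for $\phi_f(\Phi(F))$ over semistable $F$ with $\phi(F)>\theta$), a case split $\gamma_-\geq\theta$ versus $\gamma_-<\theta$, and, crucially, only for small rotations $\theta\leq\psi_0$ (Definition \ref{psi_define}) --- not for arbitrary $a$ in one shot.

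Second, you explicitly leave the transfer of (iii) --- the log-branch comparison --- as an expectation, and that is where most of the paper's proof is spent (rotating the branch, comparing $\coimage(Z)^{\theta}_{comp}$ with $\coimage(Z)^{0}_{comp}$ case by case). Your guess at the mechanism is also off: local finiteness is not used to cut the continuum of hearts down to finitely many cases. The paper instead iterates the small-rotation step, producing an increasing sequence $x_i=x_{i-1}+\psi_i$, and must handle the possibility that this sequence accumulates at a finite $\kappa$: there one proves $\Phi(\s{P}[\kappa,\kappa+1))\subset\s{P}[\kappa,\kappa+1)[1]$, replaces $\Phi$ by $[-1]\circ\Phi$, verifies (i)--(iii) at $\kappa$, and continues. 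This accumulation is an infinitely-many-steps phenomenon that ``finitely many genuinely distinct hearts'' cannot capture. A smaller but genuine gap: your closing claim that strict monotonicity of $\rho$ on $(0,1]$ together with $\rho(\theta+1)=\rho(\theta)+1$ forces strict monotonicity on all of $\mR$ is false without a constraint confining $\rho((0,1])$ to an interval of length one (take $\rho(\theta)=2\theta$ on $(0,1]$: then $\rho(0.9)=1.8>1.6=\rho(1.3)$); the correct route is to run the window argument in every rotated heart, which is available only after the transfer you deferred. So: right skeleton, but the theorem's actual substance --- the transfer of (i) and (iii) across rotations --- is either incorrect or missing.
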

\begin{proof}
Let $(Z_{\theta}, \s{P}_{\theta})$ be the stability condition $(Z, \sP)\cdot e^{i\pi\theta}$, with heart $A_{\theta} \cong \s{P}(\theta, \theta + 1] = \sP_{\theta}(0, 1]$.  We use the notation $M_\theta$, $m_{\theta}$, $\phi_f^\theta$, and $\image(Z)^\theta_{comp}$ for the obvious objects associated to $(Z_\theta, \s{P}_\theta)$.  Note that generally,  $\image(Z)^\theta_{comp}$ will not be equal to $\image(Z)^0_{comp}$: the log branch will change, thus changing which elements of $\image(Z)$ are ``comparable".  We claim if $\Phi$ satisfies (i), (ii), and (iii) above for $(Z, \sP)$, then it will also satisfy them for $(Z_{\theta}, \s{P}_{\theta})$, $\theta \in \mR$. Only (i) and (iii) are unclear.  Once this is shown, it is easy to see that $\Phi$ is semistable preserving: if not, we can choose $\theta$  to ensure $\Phi(F)$ is not cohomologically pure (in the t-structure with heart $\s{A}_\theta$).  The full compatibility easily follows.

To show that $\Phi$ satisfies (i), (ii), and (iii) with regard to $(Z_\theta, \s{P}_\theta)$ it is enough to assume that $\theta \in (0, 1]$, for the shift [1] is strongly compatible.  We split the argument into the cases $\theta < m_0$ and $\theta > m_0$.  The main difficulty with the latter is showing (i) is satisfied:  one encounters problems understanding $\Phi(\sP(m_0, \theta][1])$.  We first handle $\theta \leq m_{0}$.

By definition, $\s{A}_{\theta}$ is the extension-closed full subcategory of $\sT$ generated by $\s{P}(\theta, m_0]$, $\s{P}(m_0, 1]$, and $\s{P}(0, \theta)[1]$.  Let $m^\prime_\theta \in \mR$ be the maximal number such that for semistable $F \in \s{P}(\theta, 1]$,  $\phi_f(\Phi(F)) > m^\prime_\theta$ (it exists for the same reason $m_\theta$ exists).  Our choice of $\theta$ and $M_0$ ensures $0 < m^\prime_\theta  \leq 1$.  If $m^\prime_\theta \geq \theta$, then by Proposition \ref{membership_please}, and condition (iii)
\begin{align*}
\Phi(\s{P}(\theta, m_0]) & \subset \s{P}(\theta, 1] \subset  \s{A}_{\theta}, \\
\Phi(\s{P}(0, \theta)[1]) &\subset \s{P}(1, 2] \subset \s{D}_{\theta}^{\leq 0} \cap \s{D}_{\theta}^{\geq -1}, \\
\Phi(\s{P}(m_0, 1]) &\subset \s{P}(1, 2] \subset \s{D}_{\theta}^{\leq 0} \cap \s{D}_{\theta}^{\geq -1}.
\end{align*}
Thus showing that $\Phi$ satisfies (i) in $(Z_\theta, \sP_\theta)$ with $M_\theta = 0$.  On the other hand, if $m_\theta^\prime < \theta$, then
\begin{align*}
\Phi(\s{P}(\theta, \psi]) & \subset \s{P}(\theta -1, 1] \subset  \s{D}_{\theta}^{\leq 1} \cap \s{D}_{\theta}^{\geq 0}, \\
\Phi(\s{P}(0, \theta)[1]) &\subset \s{P}(1, m_{\theta}^\prime] \subset \s{D}_{\theta}^{\leq 1} \cap \s{D}_{\theta}^{\geq 0}, \\
\Phi(\s{P}(m_0, 1]) &\subset \s{P}(1, m_\theta^\prime + 1] \subset \s{D}_{\theta}^{\leq 1} \cap \s{D}_{\theta}^{\geq 0}.
\end{align*}
Again showing that $\Phi$ satisfies (i) ($M_\theta = 1$) with respect to $(Z_\theta,\sP_\theta)$.

To show (iii) is satisfied; we only need to check the case of strict inequality since equality is clearly satisfied.  Any choice of log branch splits $\mC^{\ast}$ into two symmetric pieces.  $\mathbb{H}^{\prime}$ is adapted to the branch with discontinuity at the positive real axis. By definition, $\image(Z)^{\theta}_{comp}$ is defined using $\mathbb{H}^{\prime} \cdot e^{i\pi\theta}$ and a log branch with discontinuity at the ray of angle $\pi\theta$ (relative to the positive real axis).  We can assume that on the positive real axis, the phases of the two branches agree.  With this description, we can rephrase (iii) for $\image(Z)_{eff} \cap \image(Z)^{\theta}_{comp}$ in terms of $\image(Z)_{eff} \cap \image(Z)^{0}_{comp}$.

Let $m_\theta^\prime \geq \theta$.  For $v,w \in \image(Z)_{eff} \cap \image(Z)^{\theta}_{comp} \cap \image(Z)^{0}_{comp}$ (iii) is clear: $v, w, [[\Phi]]v$, and $[[\Phi]]w$ will have the same phase assignment in both branches.  For the case $w \notin \image(Z)^{0}_{comp}$,  $Z([[\Phi]]w) \in \mathbb{H}^\prime$.   Our choice of log branch for $\phi_f^\theta$ ensures $\phi^\theta_f([[\Phi]]w) > 2$.  Since $v \in \image(Z)_{comp}^0$, $\phi^\theta_f([[\Phi]]v) \leq 2$, thus handling this case.  To handle $v,w \notin \image(Z)^{0}_{comp}$ one first notes Lemma \ref{semistable_phases} implies $m_0 + 1 < \phi_f(v)$.  Therefore, $-v, -w \in \image(Z)^{0}_{comp} \cap \image(Z)^{\theta}_{comp} $.  The linearity of $[[\Phi]]$, and the above case then gives the result.   To handle $m_\theta^\prime < \theta$ it suffices to prove (iii) for $[1] \circ \Phi$.  For this autoequivalence, identical arguments as above can be used.

We have now shown $\Phi$ satisfies (i), (ii), and (iii) with regard to $(Z_{\theta}, \s{P}_{\theta})$ for $\theta \leq m_0$. 
Consider the sequence $\omega_1 = m_0$, $\omega_2 = m_{\omega_1}$, etc.  The sequence has the property that $\Phi$ satisfies the conditions stated above for each $(Z_{\omega_i}, \sP_{\omega_i})$.   Note if the conditions are met for $(Z_{\omega}, \sP_{\omega})$ with $\theta - \omega \leq m_{\omega}$, then the above arguments ensure that it is true for $\theta$.  This implies if  $\lim_{i \to \infty} \omega_i =  \infty$, then we are done.  Thus to finish our claim that $\Phi$ satisfies (i), (ii), (iii) for all $(Z_{\theta}, \sP_{\theta})$ it suffices to show if $\lim_{i \to \infty} \omega_i = \kappa < \infty$ then $\Phi$ satisfies the condition for $(Z_{\kappa + \rho}, \sP_{\kappa + \rho})$ for $\rho \in [0, 1)$.  



We first show show that $\Phi(\sP[\kappa, \kappa + 1)) \subset \sP[\kappa + 1, \kappa + 2)$.  Let $F \in\sP[\kappa, \kappa + 1)$ be a semistable object.  Then for some $j$ with $\psi_j$ small (or possibly zero), $F \in \s{A}_{\omega_j}$ and $\phi(F) > m_{\omega_j}$.  By Proposition \ref{membership_please}, $\Phi(F) \in \s{A}_{\omega_j}[1]$.  If $\Phi(F)\notin \sP[\kappa, \kappa + 1)[1]$, then there exists at least one semistable factor of $\Phi(F)$ not contained $\s{B}_{\kappa}[1]$.  The definition of $\kappa$  implies that we must have a $k > j$ such that $F \in \s{A}_{\omega_k}$, yet $\Phi(F) \notin \s{A}_{\omega_k}[1]$.  Using Lemma \ref{preserves_sheaves} again,  $\Phi(F) \in \s{A}_{\omega_k}$ resulting in $\phi(F) < m_{\omega_k}$, a contradiction. 

The autoequivalence $[-1] \circ \Phi$ restricts to an element in $\Aut(\sP[\kappa, \kappa + 1))$.  Clearly this is enough to show $[-1] \circ \Phi$ satisfies (i) with respect to $(Z_\kappa, \sP_\kappa)$.  For (iii), note that given $v, w \in \image(Z)^{\kappa}_{comp}$, there exists $\omega_l$ such that $v, w \in \image(Z)^{\omega_l}_{comp}$.  The condition is assumed true for $v, w \in \image(Z)^{\omega_l}_{comp}$.  We can choose our log branches for $\omega_l$ and $\kappa$ in a compatible manner such that $\phi_f^{\omega_l}$ and $\phi_f^{\kappa}$ agree on $v, w, [[\Phi]]v$ and $[[\Phi]]w$, thus showing that (iii) is satisfied.   

We can therefore apply the above machinery to $\rho \leq m_{\kappa}$  (with respect to $[-1] \circ \Phi)$.  However, since $[-1] \circ \Phi(\sP[\kappa, \kappa + 1)) \subset \sP[\kappa, \kappa +1)$, it is clear that  if $m_{\omega_k} \neq 1$ and $\rho > m_\kappa$ then $(Z_{\kappa + \rho}, \sP_{\kappa + \rho})$ and $(Z_{\kappa + m_\kappa}, \sP_{\kappa + m_\kappa})$  differ only by a shift of ``fake" phases.  This proves our claim.

We will now show that $\Phi$ is compatible with $(Z, \s{P})$.  Given $F$ semistable in $(Z, \s{P})$ with $\phi(F) = \eta$, suppose that $\Phi(F)$ is not semistable.  The Harder-Narasimhan filtration of $\Phi(F)$ gives $\phi_{-}(\Phi(A)) < \phi_f(\Phi(F) <  \phi_0(\Phi(A))$.  If we set $\theta = \phi_f(\Phi(F)$, in the stability condition $(Z_{\theta}, \s{P}_{\theta})$, $\coh{0}(\Phi(F)) \neq 0$ and $\coh{1}(\Phi(F)) \neq 0$.  From above, we know that $\Phi$ satisfies conditions (i), (ii), and (iii) for $(Z_{\theta}, \s{P}_{\theta})$, allowing us to apply Lemma \ref{preserves_sheaves}, and get a contradiction.  Therefore $\Phi(F)$ is semistable in $(Z, \s{P})$, with phase $\theta$.  We can therefore remove the ``f" from $\phi_f$.  Condition (iii) is then the condition for compatibility.
\end{proof}

\begin{corollary}
\label{still_compatible}
If $\Phi$ is compatible with $(Z, \sP)$ then it is compatible with $(Z, \sP)\cdot g$ for $g \in \widetilde{\mr{GL}^{+}}(2, \mR)$.
\end{corollary}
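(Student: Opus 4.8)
The plan is to exploit the fact that the right action of $\widetilde{\mr{GL}}^+(2, \mR)$ leaves the class of semistable objects untouched and merely relabels their phases by an order-preserving map. Writing $g = (T, f)$ as in the proof of Lemma \ref{groupactions}, the stability condition $(Z, \sP) \cdot g = (Z', \sP')$ has $\sP'(\phi) = \sP(f(\phi))$, so an object is semistable for $(Z', \sP')$ precisely when it is semistable for $(Z, \sP)$. Thus the first requirement in the definition of compatibility---that $\Phi$ be semistable preserving---transfers immediately from $(Z, \sP)$ to $(Z, \sP) \cdot g$, and it remains only to check the ordering condition.

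The first concrete step is to record how the phase of a semistable object changes under $g$. If $F$ is semistable of phase $\phi(F)$ for $(Z, \sP)$, then $F \in \sP'(\phi)$ exactly when $f(\phi) = \phi(F)$, so the phase of $F$ for $(Z, \sP) \cdot g$ is $f^{-1}(\phi(F))$. Since $f$ is a strictly increasing map satisfying $f(\phi + 1) = f(\phi) + 1$, its inverse $f^{-1}$ is also strictly increasing; this monotonicity is the single property that drives the whole argument.

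With this in hand the verification is a direct monotonicity computation. For semistable $F, G$, strict inequality of their $(Z, \sP)\cdot g$-phases, namely $f^{-1}(\phi(F)) < f^{-1}(\phi(G))$, is equivalent by strict monotonicity of $f^{-1}$ to $\phi(F) < \phi(G)$. Compatibility of $\Phi$ with $(Z, \sP)$ then gives $\phi(\Phi(F)) < \phi(\Phi(G))$, and applying $f^{-1}$ once more yields the desired strict inequality between the $(Z, \sP)\cdot g$-phases of $\Phi(F)$ and $\Phi(G)$. The equality case runs identically using injectivity of $f^{-1}$. Hence $\Phi$ strictly preserves the ordering for $(Z, \sP) \cdot g$, which together with the semistable-preserving property established above is exactly compatibility of $\Phi$ with $(Z, \sP) \cdot g$.

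Since every step is a direct translation through the order-preserving relabeling $f$, I expect no substantive obstacle here; the corollary is essentially the observation that compatibility is a statement purely about the order relation on phases of semistable objects, and that the $\widetilde{\mr{GL}}^+(2, \mR)$-action moves that relation only by an increasing change of coordinates. The only point requiring genuine care is the bookkeeping of the direction of the phase relabeling (it is $f^{-1}$ rather than $f$ that acts on phases), and confirming that the strict monotonicity of $f$---guaranteed by its membership in $\widetilde{\mr{GL}}^+(2, \mR)$---is precisely what keeps both the strict inequality and the equality invariant.
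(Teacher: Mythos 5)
Your proof is correct and is exactly the argument the paper leaves implicit: the $\widetilde{\mr{GL}}^+(2,\mR)$-action preserves the class of semistable objects (as noted in the proof of Lemma \ref{groupactions}) and relabels phases by the strictly increasing map $f^{-1}$, so both clauses of Definition \ref{compatibility} transfer verbatim. You also correctly identified the one bookkeeping subtlety, namely that phases transform by $f^{-1}$ rather than $f$, since $\sP'(\phi) = \sP(f(\phi))$.
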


\begin{corollary}
Let $\Phi$ satisfy all conditions of Theorem \ref{stable_preserving}.  Then $\Phi$ is strongly compatible if and only if the induced automorphism on $\image(Z)$ extends to an orientation preserving $\mR$-linear automorphism of $\mC$.
\end{corollary}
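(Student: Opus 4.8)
The plan is to translate ``strongly compatible'' directly into the two defining equations of the group actions from Lemma~\ref{groupactions} and then match them against the hypothesis. Writing $g=(T,f)\in\widetilde{\mr{GL}}^+(2, \mR)$, the equation $\Phi\cdot(Z,\s{P})=(Z,\s{P})\cdot g$ unwinds into a \emph{charge equation} $Z\circ[\Phi]^{-1}=T^{-1}\circ Z$ and a \emph{slice equation} $\Phi(\s{P}(\phi))=\s{P}(f(\phi))$ for all $\phi$. Let $\Upsilon\colon\image(Z)\to\image(Z)$ denote the automorphism induced by $[[\Phi]]$ under the identification $\coimage(Z)\cong\image(Z)$; concretely $Z(\Phi(E))=\Upsilon(Z(E))$ for every object $E$. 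Applied on all of $K(\s{T})$, the charge equation is then precisely the statement that $T$ restricts to $\Upsilon$ on $\image(Z)$. This reduces everything to producing a suitable $T$ (and $f$).

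First I would dispatch the easy direction. If $\Phi$ is strongly compatible, pick $g=(T,f)$ realizing it. By the description of $\widetilde{\mr{GL}}^+(2, \mR)$ recalled in the proof of Lemma~\ref{groupactions}, $T$ is an orientation preserving $\mR$-linear automorphism of $\mR^2\cong\mC$, and the charge equation forces $T|_{\image(Z)}=\Upsilon$. Thus the induced automorphism on $\image(Z)$ extends to such a $T$, as required.

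For the converse, assume $\Upsilon$ extends to an orientation preserving $\mR$-linear automorphism $T$ of $\mC$; the charge equation then holds by construction, and it remains to manufacture $f$ realizing the slice equation. Since $\Phi$ satisfies (i)--(iii), Theorem~\ref{stable_preserving} shows $\Phi$ is compatible, so by Proposition~\ref{order_preserve_prop} it is semistable preserving and strictly order preserving; hence there is a strictly increasing map $b$, defined on the phases $\phi$ with $\s{P}(\phi)\neq 0$, with $\Phi(\s{P}(\phi))=\s{P}(b(\phi))$. Because $Z(\Phi(E))=T(Z(E))$ for $E\in\s{P}(\phi)$, the ray of $Z(\Phi(E))$ is the image under $T$ of the ray of $Z(E)$; identifying a phase $\phi$ with the direction $e^{i\pi\phi}$, this says $b(\phi)\equiv\bar T(\phi)\pmod 2$, where $\bar T$ is the orientation preserving circle homeomorphism induced by $T$. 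Every increasing lift $f$ of $\bar T$ automatically satisfies $f(\phi+1)=f(\phi)+1$ (since $T$ is linear, $\bar T$ is antipodal equivariant), so each such $f$ gives a bona fide element $(T,f)\in\widetilde{\mr{GL}}^+(2, \mR)$.

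The heart of the argument is pinning down the correct lift. Fixing one phase $\phi_0$ with $\s{P}(\phi_0)\neq 0$, I would choose the lift $f$ of $\bar T$ with $f(\phi_0)=b(\phi_0)$, which is possible since $b(\phi_0)\equiv\bar T(\phi_0)\pmod 2$. Then $f-b$ takes values in $2\mZ$ on nontrivial phases, is $1$-periodic (both $f$ and $b$ satisfy $x(\phi+1)=x(\phi)+1$, the latter because $\Phi$ commutes with $[1]$ and $\s{P}(\phi+1)=\s{P}(\phi)[1]$), and vanishes at $\phi_0$. A short monotonicity estimate on the interval $(\phi_0,\phi_0+1]$, where both $f$ and $b$ increase by exactly $1$, rules out any nonzero even value of $f-b$, forcing $b=f$ wherever $\s{P}(\phi)\neq 0$. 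Finally, since $\Phi$ and $f$ are bijections matching nontrivial phases to nontrivial phases, trivial slices map to trivial slices, so $\Phi(\s{P}(\phi))=\s{P}(f(\phi))$ for every $\phi$; with both the charge and slice equations in hand, $g=(T,f)$ witnesses strong compatibility. I expect this last lift-matching step to be the main obstacle, since it is where the exact (not merely mod-$2$) value of the phase rearrangement must be reconciled with the purely linear datum $T$.
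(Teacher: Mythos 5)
Your proof is correct. The paper states this corollary without proof, treating it as immediate from Theorem \ref{stable_preserving} and the definitions of the two group actions in Lemma \ref{groupactions}; your unwinding into the charge equation ($T$ restricting to the induced map on $\image(Z)$) and the slice equation (choosing the lift $f$ of the circle map with $f(\phi_0)=b(\phi_0)$ and using monotonicity plus the mod-$2$ constraint to force $f=b$ on nontrivial phases, with Proposition \ref{properties} handling trivial slices) is precisely the argument the paper leaves implicit.
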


In the case that $K(\s{T})$ is finite rank, Theorem \ref{stable_preserving}(ii) can be rephrased.

\begin{proposition}
$\Phi$ descends to an automorphism of $\image(Z)$ if and only if $\Phi$ restricts to an automorphism of $\ker Z$.  Further, if $\rank(\ker Z)$ is finite, this condition is equivalent to $\Phi(\ker Z) \subseteq \ker Z$.
\end{proposition}
\begin{proof}
The first statement is a consequence of the triangle axioms in the derived category of $\mZ$ modules.  For the second, it is clearly a necessary condition.  For sufficiency, first note that the map is injective, so we just need to show surjectivity.
If not surjective, there exists an element $a \in \ker(Z) \notin \Phi(\ker{Z})$, thus $\Phi^{-1}(\ker Z) \nsubseteq \ker Z$.  Our assumption can be rephrased as $\ker Z \subset \Phi^{-1}(\ker Z)$.  The equivalence of rank (since both are finite rank) implies that $\Phi^{-1}(\ker Z)/\ker Z$ is a torsion group.  However, $\Phi^{-1}(\ker Z)/\ker Z \subset \image Z \cong \image Z \subset \mathbb{C}$ which has no torsion points, a contradiction.  Thus $\Phi$ is an automorphism when restricted to $\ker(Z)$.
\end{proof}

\section{n-gons}
\label{n-gons_chapter}
This chapter is a collection of facts and definitions about n-gons and their derived categories.  We define the ``classical" stability condition, which will be used throughout the rest of this paper.  Throughout the rest of the paper, our base field will be $\mC$.  When no confusion can arise, we omit the structure sheaf in the tensor notation.

\subsection{The geometry of n-gons.}
Let $\E_n$, $n \in \mathbb{N}$, denote the n-gon:  projective singular reducible curves consisting of a cycle of n components, all isomorphic to $\mP$, with nodal singularities (i.e. transverse intersections).  $\E_m$ is a Galois cover of $\E_n$ if and only if $n|m$.  In particular, n-gons are Galois covers of the Weierstrass nodal cubic.  We fix a consistent choice of morphisms $\{\pi_{m,n} \in \phom(\E_m, \E_n)\}_{m,n \in \mZ, n|m}$ and  $\{\iota_{m,n} \in \mr{Gal}(\E_m, \E_n)\}_{m,n \in \mZ, n|m}$ satisfying $\pi_{m, n} \circ \pi_{l, m} = \pi_{l,n}$ and $\pi_{m,n} \iota_{m,l}  = \iota_{n,l}$ for $l | n | m$ and $\iota_{n,1}$ corresponds to the first root of unity under the natural isomorphism $\mr{Gal}(\E_m, \E_n) \cong \mu_n$.   We oftentimes omit the second number when it is 1 e.g.  $\pi_{m}$ for $\pi_{m,1}$.

The normalization of $\E_n$ is $\amalg_{n} \mathbb{P}^1$.  
 We label the components with $\mZ/n\mZ$ in manner  consistent with the action of our choice of $\iota_n$.

We will also need the projective genus 0 singular curves $\I_m$.  These curves are a chain of  $m$ reducible components, all isomorphic to $\mP$.  They can be obtained as partial normalizations of the $m$-gon at any one of its singular points.  Using these curves, we can alternatively characterize $\iota_{m,n}$ as a consistent choice of elements in $\mr{Gal}(\E_m, \E_n)$ such that $\iota_{m,n} \I_n \cap \I_n \neq \emptyset$ for any $\I_n \subset \E_m$.

\begin{figure}[htb!]
\centering
\includegraphics{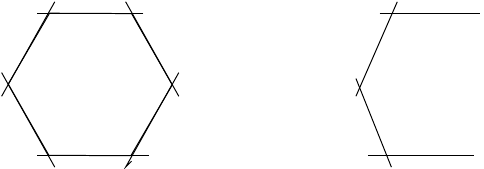}
\caption{$\E_6$ and $\I_4$}
\end{figure}

\subsection{$K(\bderive{\E_n})$ and $\sigma_{cl}(n)$}
The isomorphism $K(\bderive{\E_n}) \cong \mZ^{n+1}$ is demonstrated by analyzing the short exact sequences supplied by the adjunction map $\mr{Id} \rightarrow \eta_{\ast}\eta^{\ast}$, see \cite[Proposition 2.3]{Ruiperez_Lopez}.  Our preferred basis for $K(\bderive{\E_n})$ consists of $e_{0} = [k(p)]$ for a smooth point $p \in \E_n$, and $e_{i} = [\eta_\ast(\s{O}_{\mP_{i}}(-1))]$, $0 < i \leq n$.  Define $\mr{rank}_{i}(\sF)$ as the dimension of the vector space obtained by restricting $\sF$ to the generic point of the $i$th component.  Clearly $[\sF] = \chi(\sF)e_0 + \Sigma_{n}\mr{rank}_i(\sF)e_i$.

Let  $\sigma_{cl}(n) = (Z_{cl}, \s{P}_{cl})$ denote the stability condition with charge $Z_{cl}(F) = -\chi(F) + i\rktot(F)$, with heart $\mr{Coh}(\E_n)$.  Here $\rktot$ designates the function $\sum_{0 < i \leq n} \mr{rank}_{i}$.  It is clear that $\ker(Z_{cl})$ is finite rank and $\image(Z_{cl})$ is rank 2.

This stability condition is an extension of classical slope to the case of n-gons.  Given a torsion free $\sF \in \Coh(\E_n)$, one can define its slope $\psi(\sF) := \frac{\chi(\sF)}{\rktot(\sF)}$.   In the case that $\sF$ is semistable (in $\sigma_{cl}(n)$), the conversion between $\psi(\sF)$, its slope, and $\phi(\sF)$, its phase, is given by
\begin{displaymath}
\psi(\sF) = - \cot(\pi \phi(\sF)).
\end{displaymath}

\subsection{Classification of torsion free sheaves.}

\begin{theorem}[\cite{MR2264108} Theorem 1.3]
\label{tf_bundles}
With $\E_n$ and $\mathbb{I}_k$ as above, let $\sE$ be an indecomposable torsion free sheaf on $\E_n$.
\begin{enumerate}
\item If $\s{E}$ is locally free, then there is an $\acute{e}tale$ covering $\pi_{nr,n}: \E_{nr} \rightarrow \E_n$, a line bundle $ \s{L} \in Pic(\E_{nr})$, and a natural number $m \in \mathbb{N}$ such that
\begin{displaymath}
\s{E} \cong \pi_{nr,r\ast}(\s{L}\otimes\s{F}_m),
\end{displaymath}
where $\s{F}_{m}$ is indecomposable and locally free on $\E_{nr}$, recursively defined by the sequences
\begin{displaymath}
\begin{array}{lll}
\begin{CD}
 0 @>>> \s{F}_{m-1} @>>> \s{F}_m @>>> \s{O}_{\E_{nr}} @>>> 0
\end{CD}, & m \geq 2, & \s{F}_{1} = \s{O}_{\E_{nr}}
\end{array}.
\end{displaymath}
\item If $\sE$ is not locally free then there exists a finite map $p_k: \mathbb{I}_k \rightarrow \E_n$ and a line bundle $\sL \in Pic(\mathbb{I}_k)$ (where $k, p_k$ and $\sL$ are determined by $\sE$) such that $\sE \cong p_{k\ast}(\sL)$. 
\end{enumerate}
\end{theorem}

\begin{example}
\label{sph_vb}
If $\sV \cong  \pi_{nr,r\ast}(\s{L}\otimes\s{F}_m)$ with $m > 1$, then $\dim \End(\sV) > 1$. 
\end{example}

\subsection{Automorphism and Picard groups of $\E_n$}

There are natural isomorphisms $\Aut(\E_1) \cong \mC^\ast \rtimes Z_2$ and $\Pic^0(\E_1) \cong \mathbb{G}_m \cong \mC^\ast$.  For any closed $\lambda \in \mC^\ast$, we will let $t_\lambda \in \Aut(\E_1)$ denote the automorphism that restricts to translation by $\lambda$ on the smooth locus (it fixes the singular point) and $\sO_{\E_1}(0; \lambda)$ will denote the invertible sheaf $\sO_{\E_1}(t_{\lambda_1}(p) - p)$ for a choice of smooth closed $p \in \E_1$.  It is independent of the choice of $p$ since if $\sL \in \Pic^0(X)$ then $t_{\lambda_1}^\ast \sL \cong \sL$.  This behavior mirrors similar statements on elliptic curves. Note that the invariance implies $\sO_{\E_1}(0; \lambda_1) \otimes \sO_{\E_1}(0; \lambda_2) \cong \sO_{\E_1}(0; \lambda_1 \lambda_2)$. 

\subsection{ $\Coh(\E_n)$ as $\pi_{n\ast}\sO_{\E_n}$-modules.}
\label{littlemod}
The maps $\pi_{n}$ are affine, thus $\pi_{n\ast}\sO_{\E_n}$ has a natural monoid structure in $\Coh(\E_1)$ (with the standard tensor monoidal structure).  That is to say,  there exists an associative morphism $\pi_{n\ast}\sO_{\E_n} \otimes \pi_{n\ast}\sO_{\E_n} \xrightarrow{B} \pi_{n\ast}\sO_{\E_n}$.  As carried out in \cite{EGAII}, one has an equivalence $\Coh(\E_n) \cong \pi_{n\ast}\sO_{\E_n}$-mod.  The purpose of this section is to give an explicit description of the ring structure on $\pi_{n\ast}\sO_{\E_n}$ and the action of $\iota_{n\ast}^k$ on $\pi_{n\ast}\sO_{\E_n}$-mod.  Throughout this section $\xi$ is a primitive $n$th root of unity.  

Over $\mC$,  $\pi_{n\ast}\sO_{\E_n} \cong \oplus_{0 \leq j < n} \sO_{\E_1}(0; \xi^k)$. This decomposition, combined with the calculation
\begin{displaymath}
\phom(\sO_{\E_1}(0; \lambda_1), \sO_{\E_1}(0; \lambda_2)) = \begin{cases} 0 & \lambda_1 \neq \lambda_2 \\ \mC & \lambda_1 = \lambda_2 \end{cases}
\end{displaymath}
 and isomorphism $\sO_{\E_1}(0; \xi^j)\otimes \sO_{\E_1}(0; \xi^k) \cong \sO_{\E_1}(0; \xi^{j + k})$, implies $B$ can be described by a choice of isomorphism:
\begin{displaymath}
B_{jk}:   \sO_{\E_1}(0; \xi^j) \otimes \sO_{\E_1}(0; \xi^k) \rightarrow \sO_{\E_1}(0; \xi^{j + k})
\end{displaymath}
This choice satisfies a strong consistency condition: setting $B_{i_0, i_1, \ldots, i_s} = B_{i_0, \Sigma_{0 < j \leq s} i_{j}}\circ \ldots \circ B_{i_{s-2}, i_{s-1} + i_{s}} \circ B_{i_{s-1}, i_s}$, then $B_{i_0, i_1, \ldots, i_s}$ is independent of composition order. Therefore, $B$ consists of a series of structure maps: 
\begin{displaymath}
  B_{i_0, i_1, \ldots, i_s}: \sO_{\E_1}(\sO; \xi^{i_0}) \otimes \sO_{\E_1}(0; \xi^{i_1}) \otimes \ldots \otimes \sO_{\E_1}(0; \xi^{i_s}) \xrightarrow{\cong} \sO_{\E_1}(0; \xi^{\Sigma_{0 \leq j < s} i_j})
\end{displaymath}
for $s \in \mN$ satisfying relations of the form $B_{i, j+l} \circ B_{j, l} = B_{i, j, l} = B_{i + j, l}\circ B_{i, j}$, $i, j, k \in \mZ_n$ (and many others).  

The data $\iota_n^k: \E_n \to \E_n$ (specifically, the isomorphism $\sO_{\E_n} \to \iota_{n\ast}^k \sO_{\E_n}$) translates to a monoid isomorphism $\hat \iota_n^k: \pi_{n\ast}\sO_{\E_n} \to \pi_{n\ast}\sO_{\E_n}$ (since $\pi_n \circ \iota_n \cong \pi_n$).  The above fact about $\mr{Hom}$-spaces ensures $\hat \iota_n^k|_{\sO_{\E_1}(0; \xi^l)} = \xi^{kl} \cdot \mr{Id}$, i.e., it acts by the scalar $\xi^{kj}$ on the $j$th component.  Given $\sF \in \Coh(\E_n)$ and $M_{\sF}$ a choice of corresponding $\pi_{n\ast}(\sO_{\E_n})$-module with structure morphism $\pi_{n\ast}(\sO_{\E_n}) \otimes M_{\sF} \xrightarrow{m} M_{\sF}$, $M_{\iota_{n\ast}^k \sF} = M_{\sF}$ (as objects in $\Coh(\E_1)$) with structure morphism $\hat m$ given by $\hat m|_{\sO_{\E_1}(0; \xi^j)\otimes M} = \xi^{kj} m|_{\sO_{\E_1}(0; \xi^j)\otimes M}$.

\subsection{ $\Coh(\E_n \times \E_n)$ as $(\pi_n \times \pi_n)_\ast \sO_{\E_n \times \E_n}$-modules}
\label{mod_structure}
For $\E_n\times \E_n$ and $\E_1 \times \E_n$, much of the statements in the previous section apply.  Mainly, $\pi_n \times \pi_n$ and $\mr{Id} \times \pi_n$ are flat and affine.  Thus, we know $\Coh(\E_n \times \E_n) \cong (\pi_n \times \pi_n)_\ast \sO_{\E_n \times \E_n}$-mod (similarly for $\Coh(\E_1 \times \E_n)$).  Our goal is to give explicit description of the ring structure on $(\pi_n \times \pi_n)_\ast \sO_{\E_n \times \E_n}$ and the action of $\iota^j_n \times \iota_n^k$.  First, we decompose $(\pi_n \times \pi_n)_\ast \sO_{\E_n \times \E_n}$.  To simplify notation, when convenient we set $\wtpi^{nm}_{kl} := \pi_{n, k} \times \pi_{m, l}$, if $n = k$ then $\wtpi_{nl}^{nm} := \mr{Id} \times \pi_{m, l}$ (likewise for $m = l$).  Let $\rho^i$ be the projection of $\E_1 \times \E_1$ onto the $i$th factor. 

 \begin{lemma}\mbox{}
\label{notouch}
   \begin{enumerate}
   \item   $(\pi_n \times  \pi_n)_\ast \sO_{\E_n \times \E_n} \cong \oplus_{0 \leq i, j < n} \rho^{1\ast}\sO_{\E_1}(0; \xi^i) \otimes_{\sO_{\E_1 \times \E_1}} \rho^{2\ast} \sO_{\E_1}(0; \xi^j)$. 
    \item  $\mr{Hom}_{\E_1 \times \E_1}(\rho^{1\ast} \sO_{\E_1}(0; \xi^k), \rho^{2\ast} \sO_{\E_1}(0; \xi^j)) \cong \begin{cases} 0 & k, j \neq 0 \\ \mC & k = j = 0 \end{cases}$.
    \end{enumerate}
\end{lemma}
\begin{proof}
To begin, we use the following commutative diagram to set notation.  The two squares are Cartesian. 
\begin{displaymath}
  \xymatrix{ \E_n \ar[d]^{\pi_{n}} & \E_n \times \E_n \ar[d]^{\wtpi_{1n}^{nn}} \ar[l]^{\rho_{nn}^{1}}&  \\ \E_1 & \ar[l]_{\rho_{1n}^1} \E_1 \times \E_n \ar[d]^{\wtpi_{11}^{1n}} \ar[r]^{\rho_{1n}^{2}} & \E_n \ar[d]^{\pi_{n}} \\ & \E_1 \times \E_1 \ar[lu]^{\rho_{11}^1} \ar[r]^{\rho_{11}^{2}} & \E_1}
\end{displaymath}
Using proper base change,
\begin{align*}
  \wtpi^{n n}_{1n\ast} \sO_{\E_n \times \E_n} \cong \wtpi^{nn}_{1n\ast} \rho_{nn}^{1\ast} \sO_{\E_n} \cong \rho_{1n}^{1\ast} \pi_{n\ast}\sO_{\E_n} \cong \oplus_{0 \leq k < n} \rho_{1n}^{1\ast} \sO_{\E_1}(0; \xi^k).
\end{align*}
However,
\begin{align*}
  \widetilde \pi^{1n}_{11\ast} \rho_{1 n}^{1\ast} \sO_{\E_1}(0; \xi^k) & \cong \widetilde \pi^{1n}_{11\ast} \wtpi^{1n\ast}_{1 1} \rho_{11}^{1*} \sO_{\E_1}(0; \xi^k) \\
& \cong  \wtpi^{1n}_{11\ast} \sO_{\E_1 \times \E_n} \otimes_{\sO_{\E_1 \times \E_1}} \rho_{11}^{1*} \sO_{\E_1}(0; \xi^k) \\
& \cong \widetilde \pi^{1n}_{1,1\ast} (\rho_{1n}^{2\ast} \sO_{\E_n}) \otimes_{\sO_{\E_1 \times \E_1}} \rho_{11}^{1*} \sO_{\E_1}(0; \xi^k)\\
& \cong (\rho_{11}^{2\ast} \pi_{n\ast} \sO_{\E_n}) \otimes_{\sO_{\E_1 \times \E_1}} \rho_{11}^{1\ast} \sO_{\E_1}(0; \xi^k) \\
& \cong \oplus_{0 \leq i < n} \rho_{11}^{2\ast} \sO_{\E_1}(0 ; \xi^i) \otimes_{\sO_{\E_1 \times \E_1}} \rho_{11}^{1\ast} \sO_{\E_1}(0; \xi^k)
\end{align*}
Summing over these equivalences gives the first isomorphism.

For the second, we note
\begin{align*}
 \mr{Hom}_{\E_1 \times \E_1}(\rho^{1\ast} \sO_{\E_1}(0; \xi^k), \rho^{2\ast} \sO_{\E_1}(0; \xi^j)) & \cong \mr{Hom}_{\E_1}(\sO_{\E_1}(0; \xi^k), R\rho_\ast^{1}\rho^{2\ast} \sO_{\E_1}(0; \xi^j)) \\
 & \cong \mr{Hom}_{\E_1}(\sO_{\E_1}(0; \xi^k), \Phi_{\sO_{\E_1 \times \E_1}}(\sO_{\E_1}(0; \xi^j))\\
 \end{align*}
Here, the symmetry of $\sO_{\E_1 \times \E_1}$ allows us to reverse the direction of the transform.  We have a triangle in $\bderive{\E_1}:$
 \begin{align*}
  \Phi_{\sO_{\E_1 \times \E_1}}(\sO_{\E_1}(0; \xi^j)) \to \Phi_{\sO_{\Delta}}(\sO_{\E_1}(0; \xi^j)) \to \Phi_{\s{I}_\Delta[1]}(\sO_{\E_1}(0; \xi^j)) \to   
 \end{align*}
Since $\Phi_{\sO_{\Delta}}(\sO_{\E_1}(0; \xi^j)) \cong \sO_{\E_1}(0; \xi^j)$, and from \cite{bk_fm_nodal}, $\Phi_{\s{I}_\Delta[1]}(\sO_{\E_1}(0; \xi^j)) \cong \sO_{\E_1}(0; \xi^j)$ we must have $\Phi_{\sO_{E_1 \times E_1}}(\sO_{\E_1}(0; \xi^j)) \cong 0$ or a self extension.  However,
\begin{displaymath}
\mr{Hom}_{\E_1}(k(s), \Phi_{\sO_{E_1 \times E_1}}(\sO_{\E_1}(0; \xi^j))) \cong \mr{Hom}_{\E_1}(\sO_{\E_1}, \sO_{\E_1}(0; \xi^j))
\end{displaymath}
 for any closed $s \in \E_1$.  This implies that if $j \neq 0$, $\Phi_{\sO_{\E_1 \times \E_1}}(\sO_{\E_1}(0; \xi^j)) \cong 0$. The result then follows from the previous section.
\end{proof}
Let
\begin{displaymath}
\hat B: (\pi_n \times \pi_n)_\ast \sO_{\E_n \times \E_n} \otimes (\pi_n \times \pi_n)_\ast \sO_{\E_n \times \E_n} \rightarrow (\pi_n \times \pi_n)_\ast \sO_{\E_n \times \E_n}  
\end{displaymath}
be the monoid structure morphism.  Using the same arguments as in the previous section, if $\hat B_{k_0, k_1; j_0, j_1}$ denotes the restriction of $\hat B$ to the summand
\begin{displaymath}
(\rho^{1\ast}(\sO_{\E_1}(0; \xi^{k_0}) \otimes \rho^{2\ast}(\sO_{\E_1}(0; \xi^{j_0})) \otimes (\rho^{1\ast}(\sO_{\E_1}(0; \xi^{k_1})\otimes \rho^{2\ast}(\sO_{\E_1}(0; \xi^{j_1}))
\end{displaymath}
 Lemma \ref{notouch} implies $\hat B_{k_0, k_1; j_0, j_1}$ gives a isomorphism
 \begin{align*}
(\rho^{1\ast}(\sO_{\E_1}(0; \xi^{k_0}) \otimes \rho^{2\ast}(\sO_{\E_1}(0; \xi^{j_0})) \otimes (\rho^{1\ast}(\sO_{\E_1}(0; \xi^{k_1})\otimes \rho^{2\ast}(\sO_{\E_1}(0; \xi^{j_1})) \qquad \quad  \\ \xrightarrow{\cong} \rho^{1\ast}(\sO_{\E_1}(0; \xi^{k_0 + k_1})\otimes \rho^{2\ast}(\sO_{\E_1}(0; \xi^{j_0 +  j_1})   
 \end{align*}
In fact, it is clear we can assume $\hat{B}_{k_0, k_1; j_0, j_1} = \rho^{1\ast} B_{k_0, k_1} \otimes \rho^{2\ast} B_{j_0, j_1}$, and in general 
\begin{displaymath}
\hat B_{k_0, k_1, \ldots, k_s; j_0, j_1, \ldots j_s} = \rho^{1\ast} B_{k_0, k_1, \ldots, k_s} \otimes \rho^{2\ast} B_{j_0, j_1, \ldots, j_s} \quad s \in \mN.
\end{displaymath}
Here we are using the same notation as in \S~\ref{littlemod}.  The $\hat B_{k_0, k_1, \ldots, k_s; j_0, j_1, \ldots j_s}$ satisfy composition equalities identical to the composition equalities on $B_{i_0, \ldots, i_s}$ detailed in the previous section.  It is also clear that $\hat B_{0, 0; 0, 0}$ is the canonical identification.

With the structure morphisms set, for the same reasons as in \S \ref{littlemod}, $(\iota_n^p \times \iota_n^q)_\ast$ acts on $(\pi_n \times \pi_n)_\ast \sO_{\E_n \times \E_n}$-mod as follows: if $\sF \in \Coh(\E_n \times \E_n)$ and $M_{\sF}$ is an associated $(\pi_n \times \pi_n)_\ast \sO_{\E_n \times \E_n}$-module with structure morphism $m$, if we write $m = \Sigma_{0 \leq k, j < n} m_{k,j}$ where $m_{k, j}$ is $m$ restricted to the summand $\rho^{1 \ast} \sO_{\E_1}(0; \xi^k) \otimes \rho^{2\ast} \sO_{\E_1}(0; \xi^j) \otimes M_{\sF}$, then $(\iota_n^p \times \iota_n^q)_\ast \sF$ has as associated module $M_{\sF}$ (as an object in $\Coh(\E_1 \times \E_1)$) with structure morphism $\hat m = \Sigma_{0 \leq k, l < n} \hat m_{k,l}$ where $\hat m_{k,l} = \xi^{pk + ql}m_{k,l}$.

\section{Compatible autoequivalences of \bderive{\E_n}}
\label{gamma_chapter}
In this section we will apply Theorem \ref{stable_preserving} to the example of $n$-gons.  This serves the purpose of showing how the conditions of Theorem \ref{stable_preserving} many times reduce to easily applicable and natural conditions. 

\subsection{Reduction of conditions in Theorem \ref{stable_preserving} for $\sigma_{cl}(n)$}\label{reductions}

\begin{lemma}
\label{commute_with_iota}
Given $\Phi \in \mr{Aut}(\bderive{\E_n})$, suppose $\Phi\circ\iota_n^{\ast} \cong \iota_n^{q\ast} \circ \Phi$ for some $0 < q \leq n$, then $\Phi$ preserves the kernel of $Z_{cl}$.
\end{lemma}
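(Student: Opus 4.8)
The plan is to reduce the statement to a computation in linear algebra on $K(\bderive{\E_n})\otimes\mR$, exploiting that $\iota_n^\ast$ has finite order and that $\ker Z_{cl}$ is precisely its ``non-invariant'' part. Write $\tau$ for the automorphism of $K(\bderive{\E_n})$ induced by $\iota_n^\ast$ and $\phi$ for the one induced by $\Phi$. First I would compute $\tau$ in the preferred basis $e_0,\dots,e_n$: since $\iota_n$ generates the deck group $\mr{Gal}(\E_n,\E_1)\cong\mZ/n\mZ$, it cyclically permutes the $n$ components of $\E_n$ and sends a smooth skyscraper to a smooth skyscraper, so $\tau(e_0)=e_0$ and $\tau(e_i)=e_{i\pm 1}$ with indices read cyclically in $\{1,\dots,n\}$. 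In particular $\tau^n=\mr{id}$, and since $\chi$ and $\rktot$ are unchanged by permuting components, $Z_{cl}\circ\tau=Z_{cl}$.

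The key identification is that $\ker Z_{cl}\otimes\mR$ coincides with the sum $W$ of the eigenspaces of $\tau$ for eigenvalues $\neq 1$; equivalently $W=\ker S$, where $S=\sum_{k=0}^{n-1}\tau^k$ is the symmetrizer of $\langle\tau\rangle$. To see this I would observe that the $\tau$-invariant subspace of $K(\bderive{\E_n})\otimes\mR$ is spanned by $e_0$ and $e_1+\cdots+e_n$, hence two-dimensional, so $\dim W=(n+1)-2=n-1=\rank\ker Z_{cl}$. As $Z_{cl}(e_0)$ and $Z_{cl}(e_1+\cdots+e_n)$ are $\mR$-linearly independent in $\mC$, the invariant subspace injects into $\image(Z_{cl})$ and therefore meets $\ker Z_{cl}$ trivially. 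Being $\tau$-stable, of the correct dimension, and transverse to the invariants, $\ker Z_{cl}\otimes\mR$ must equal $W$.

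Now I would bring in the hypothesis. The isomorphism $\Phi\circ\iota_n^\ast\cong\iota_n^{q\ast}\circ\Phi$ descends to the identity $\phi\tau=\tau^q\phi$ on $K(\bderive{\E_n})$, i.e.\ $\phi\tau\phi^{-1}=\tau^q$. Conjugation preserves order, so $\tau^q$ has order $n$, forcing $\gcd(q,n)=1$; consequently $\{qk\bmod n : 0\le k<n\}=\{0,\dots,n-1\}$ and $\phi S=\phi\sum_k\tau^k=\sum_k\tau^{qk}\phi=S\phi$. Thus $\phi$ commutes with $S$ and preserves $\ker S=W$. Finally, because $\phi$ is defined over $\mZ$ and $\ker Z_{cl}=K(\bderive{\E_n})\cap W$ is the set of integral points of $W$ (the quotient $\coimage Z_{cl}\cong\image Z_{cl}\subset\mC$ is torsion-free, so $\ker Z_{cl}$ is a saturated sublattice), we conclude $\phi(\ker Z_{cl})\subseteq\ker Z_{cl}$, which is the assertion.

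The main obstacle, and the only place where geometry genuinely enters, is the first step: verifying that $\iota_n^\ast$ acts on the chosen basis by cyclically permuting the classes $e_i$ and fixing $e_0$. Everything afterward is formal. Two minor points deserve care: the deduction $\gcd(q,n)=1$, which is exactly what makes $\sum_k\tau^{qk}$ reassemble into $S$, and the passage from the real statement $\phi(W)=W$ back to the integral lattice, which is clean precisely because $\ker Z_{cl}$ is saturated.
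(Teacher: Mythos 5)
Your proof is correct, but it takes a genuinely different route from the paper's. You work with the whole subspace at once: after computing that $\tau=[\iota_n^{\ast}]$ fixes $e_0$ and cyclically permutes $e_1,\dots,e_n$, you identify $\ker Z_{cl}\otimes\mR$ with the non-invariant isotypic part $W=\ker S$, $S=\sum_{k}\tau^{k}$, deduce $\gcd(q,n)=1$ because conjugation by $[\Phi]$ preserves the order of $\tau$, conclude $[\Phi]S=S[\Phi]$ and hence $[\Phi](W)\subseteq W$, and descend to the lattice using that $\ker Z_{cl}$ is saturated. The paper instead argues element-by-element: it records that $\ker Z_{cl}=\{\sum_{i\geq 1}a_ie_i \mid \sum a_i=0\}$, uses $e_i=\tau^{i-1}e_1$ together with the commutation relation to write $[\Phi](t)=\sum_i a_i\,\tau^{q(i-1)}[\Phi](e_1)$, and then evaluates the two $\tau$-invariant functionals $\rktot$ and $\chi$ term by term, getting $\rktot([\Phi](t))=\bigl(\sum_i a_i\bigr)\rktot([\Phi](e_1))=0$ and likewise for $\chi$. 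The paper's computation is shorter and needs neither coprimality of $q$ and $n$ nor real coefficients (it works verbatim for any $0<q\leq n$), whereas your version requires the extra inputs of semisimplicity of the $\langle\tau\rangle$-action (implicit in your ``$\tau$-stable, correct dimension, transverse to invariants'' step --- worth a word, since a merely $\tau$-stable complement need not equal $W$ without Maschke) and the saturation of $\ker Z_{cl}$. What your route buys is structural information the paper leaves tacit: $\ker Z_{cl}$ is precisely the non-trivial isotypic component of the deck-group action on $K(\bderive{\E_n})$, and the hypothesis forces $q$ to be a unit modulo $n$ --- a fact the paper only surfaces later when analyzing the lifted kernels $\sK_n$.
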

\begin{proof}
By definition, an element $t \in K(\bderive{E_{n}})$ is in the kernel of $Z_{cl}$ if and only if  $t = a_{1}e_1 + \ldots a_{n}e_n$ with $\Sigma a_i = 0$.  We have $[\iota_n^{\ast}]e_i = [\iota_n^{\ast}][\eta_{\ast}\s{O}_{\mP_{i}}(-1)] =[\eta_{\ast}\s{O}_{\mP_{i+1}}(-1)] = e_{i+1}$ (if $i = n$ then $i + 1  = 1$). Thus $[\iota^{\ast}_{n}]$ acts as the identity on $e_0$, and cyclically permutes $\{e_i\}_{\mZ/n\mZ}$.  Since $\rktot = \Sigma_{0 < i \leq n} e_{i}^{\vee}$ and $\chi = e_{0}^{\vee}$ it is clear that $[\iota_{n}^{\ast}] \cdot \rktot = \rktot$ and $[\iota_{n}^{\ast}] \cdot \chi = \chi$.  Clearly the same holds for powers of $\iota^{\ast}_n$.

Using the commuting relation of $\Phi$ and $\iota_{n}^{\ast}$, we can write 
\begin{displaymath}
[\Phi]t = \Sigma_{0 < i \leq n} a_{i}[\iota_{n}^{q(i-1)\ast}]([\Phi]e_1).
\end{displaymath}  Applying $\rktot$ to the left and right yields
\begin{align*}
 \rktot([\Phi](t)) & = \rktot(\Sigma_{0 < i \leq n} a_{i}[\iota_{n}^{q(i-1)\ast}][\Phi]e_1) \\
 						& = \Sigma_{0 < i \leq n} \rktot(a_i[\iota_{n}^{q(i-1)\ast}][\Phi]e_1) \\
 						& = \Sigma_{0 < i \leq n} (a_i \rktot([\iota_{n}^{q(i-1)\ast}][\Phi]e_1)) \\
 						& = \Sigma_{0 < i \leq n} (a_i \rktot([\Phi]e_1))\\
						& = (\Sigma_{0 < i \leq n} a_i) * \rktot([\Phi]e_1) \\
						& = 0 * \rktot([\Phi]e_1) \\
 						& = 0.
\end{align*}
These equations only used the additive property and invariance of $\rktot$ under $\iota_{n}^{\ast}$.  Thus the same will be true for $\chi$ and $[\Phi]t$ is in the kernel of $Z_{cl}$.
\end{proof}

As an initial test of compatibility, we have the the following:
\begin{lemma}
\label{must be integer}
Assume $\rank(\image(Z)) = 2$ and $\image(Z)\otimes\mR = \mC$.  If $\Phi$ satisfies Theorem \ref{stable_preserving}(iii) then $[[\Phi]] \in \mr{SL}(\image(Z))$. 
\end{lemma}
\begin{proof}
Choose a basis $z_0, z_1 \in \mC$ of $\image(Z)$.  Then $z_0, z_1 \in \mathbb{H}^\prime$ and forms a basis of $\mC$ showing that $[[\Phi]]$ extends to a $\mR$-linear automorphism of $\mC$.  Since the choice of heart gives a basis for $K(\s{T})$ consisting of elements of the form $[F]$ with $F \in \sA$, we can assume $z_0, z_1 \in \image(Z)_{eff\cap comp}$ with $\phi_f(z_0) < \phi_f(z_1)$.

With our choice of basis, the half plane decided by the line $\mR \cdot z_0$ and $z_1$ does not intersect the positive real axis.  If $[[\Phi]]$ satisfies Theorem \ref{stable_preserving}(iii), the same will be true for the half plane decided by $R \cdot [[\Phi]](z_0)$ and $[[\Phi]](z_1)$. This shows the orientation on $\{[[\Phi]](z_0), [[\Phi]](z_1)\}$ imparted by the log branch agrees with the orientation as elements in $\mC$.  Thus, $[[\Phi]]\otimes \mR$ preserves orientation so $[[\Phi]] \in \mr{SL}(\image{Z})$.
\end{proof}

\subsection{The autoequivalence group $\mr{Aut}_{cl}(n)$}

Using these reductions we will produce the group $\Aut_{cl}(n) \subset \Aut(\bderive{\E_n})$ of autoequivalences compatible with $\sigma_{cl}(n)$.  We obtain this subgroup by explicitly constructing autoequivalences of $\bderive{\E_n}$.  This group will be an extension of $\Gamma_0(n) \subset \mr{SL}(2, \mZ)$, the congruence subgroup consisting of elements that are upper triangular under reduction of coefficients $\mr{SL}(2, \mZ) \rightarrow \mr{SL}(2, \mZ/n\mZ)$.
We begin by lifting autoequivalences of $\bderive{\E_1}$ to endomorphisms of $\bderive{\E_n}$.  Once this shown, we ascertain that these endomorphisms are strongly compatible autoequivalences.  As in \S \ref{n-gons_chapter}, our base field will be $\mC$ and $\xi$ will be a primitive $n$th root of unity. 

\subsubsection{The kernels $K^A$,  $A \in \mr{SL}(2, \mZ)$}
The lifting of autoequivalences relies on lifting certain objects in $\bderive{\E_1 \times \E_n}$ to objects in $\bderive{\E_n \times \E_n}$.  The underlying reason we can do this centers on a symmetry of our chosen objects.  This symmetry is the second statement of Lemma \ref{coherentthis}, and allows us to realize a $(\pi_{n} \times \mr{Id})_\ast \sO_{\E_n \times \E_n}$-module structure on the pullback $(\mr{Id} \times \pi_n)^\ast$ of these symmetrical objects.  This is enough to show a lift.  We begin by showing a nice relation regarding integral transforms of $\bderive{\E_1}$.  This relation is well known for elliptic curves.  We then define our sheaves of interest and show how this relation gives them additional symmetry. Throughout this section, $\rho_{nm}^i$, $i = 1, 2$,  will denote the standard projection of $\E_n \times \E_m$ onto the $i$th factor.   Also, when no confusion can arise, we omit the structure sheaf from the tensor notation.  We set $\sK_{\lambda_1, \lambda_2} := \rho_{11}^{2\ast} \sO_{\E_1}(0; \lambda_1) \otimes \sO_{\Gamma_{t_{\lambda_2}}}$, with $\lambda_1, \lambda_2 \in \mC^\ast$.  Using Proposition \ref{geometric_automorphism_relation} and the invariance of $\Pic^0(\E_1)$ under translation, we see that under the convolution product (up to isomorphism), the $\sK_{\lambda_1, \lambda_2}$ form an abelian group isomorphic to $\mC^\ast \times \mC^\ast$.  


Let $p$ be a smooth closed point of $\E_1$.  In \cite{bk_fm_nodal} the sheaf  $\sP := \rho_{11}^{1\ast} \sO_{\E_1}(p) \otimes \s{I}_{\Delta}[1] \otimes \rho_{11}^{2\ast} \sO_{\E_1}(p) \in \Coh(\E_1 \times \E_1)$ was shown to have similar properties to the dual of the Poincar\'e sheaf of an elliptic curve.  In particular, it is known that if $\hat \sP := \sP^\vee[1]$, then  $\sP * \hat \sP \cong \sO_{\Delta}$.  Let 
\begin{align*}
\sK_h & := \sO_{\Delta}\otimes  \rho_{11}^{2\ast} \sO_{\E_1}(-p) \\  
\hat{\sK_h} & := \sO_{\Delta} \otimes \rho_{11}^{2\ast} \sO_{\E_1}(p)\\
\sK_v & := \sP * \sK_h * \hat \sP \\
\hat{\sK_v} & := \sP * \hat \sK_h * \hat{\sP}
\end{align*}
Both $\Phi_{\sK_h}$ and $\Phi_{\sK_v}$ are autoequivalences and it is clear 
\begin{align*}
 [\sK_h] \cong \left[ \begin{array}{cc} 1  & -1 \\ 0  & 1 \end{array} \right]& \qquad [\Phi_{\sP}] \cong \left[ \begin{array}{cc} 0  & 1 \\ -1  & 0 \end{array} \right] &  [\sK_v] \cong \left[ \begin{array}{cc} 1  & 0 \\ 1  & 1 \end{array} \right] 
\end{align*}


\begin{lemma}
\label{vertical}
  $\sK_{\lambda_1,\lambda_2} * \sK_h \cong \sK_h * \sK_{\lambda_1\lambda_2, \lambda_2}$
\end{lemma}
\begin{proof}
This is equivalent to showing $\hat{\sK_h} * \sK_{\lambda_1, \lambda_2} * \sK_h \cong \sK_{\lambda_1 \lambda_2,\lambda_1}$.  This is essentially Proposition \ref{geometric_automorphism_relation}.  Using the graph morphism $\E_1 \xrightarrow{\Gamma_{t_{\lambda_1}}} \E_1 \times \E_1$ and proper base change
  \begin{align*}
    \hat{\sK_h} * \sK_{\lambda_1,\lambda_2} * \sK_h & \cong \rho_{1,1}^{1\ast} \sO_{\E_1}(p) \otimes \sK_{\lambda_1,\lambda_2} \otimes \rho_{1,1}^{2\ast} \sO_{\E_1}(-p) \\
   & \cong \rho_{1,1}^{1\ast} \sO_{\E_1}(p) \otimes \Gamma_{t_{\lambda_2}\ast}(\sO_{\E_1}) \otimes \rho_{1,1}^{2\ast} \sO_{\E_1}(-p) \otimes \rho_{1,1}^{2\ast} \sO_{\E_1}(0; \lambda_1) \\ 
   & \cong \Gamma_{t_{\lambda_2}\ast}(\sO_{\E_1}(p))  \otimes \rho_{1,1}^{2\ast} \sO_{\E_1}(-p) \otimes \rho_{1,1}^{2\ast} \sO_{\E_1}(0; \lambda_1) \\
   & \cong \Gamma_{t_{\lambda_2}\ast}(\sO_{\E_1})) \otimes \rho_{1,1}^{2\ast}(\sO_{\E_1}(t_{\lambda_2}(p)) \otimes \sO_{\E_1}(-p) \otimes \sO_{\E_1}(0; \lambda_1))
\end{align*}
 Since $\sO_{\E_1}(t_{\lambda_{2}}(p) - p) \cong \sO_{\E_1}(0; \lambda_2)$ by definition,  
\begin{align*}
\Gamma_{t_{\lambda_2}\ast}(\sO_{\E_1})) \otimes \rho_{1,1}^{2\ast}(\sO_{\E_1}(t_{\lambda_2}(p)) \otimes \sO_{\E_1}(-p) \otimes \sO_{\E_1}(0; \lambda_1)) \qquad \qquad \\ \cong \sO_{\Gamma_{t_{\lambda_2}}} \otimes \rho_{1,1}^{2\ast} (\sO_{\E_1}(0; \lambda_2) \otimes \sO_{\E_1}(0; \lambda_1)) 
 \end{align*}
and this last object is $\sK_{\lambda_1 \lambda_2, \lambda_2}$, thus the result.
\end{proof}


\begin{lemma}
$\sP * \sK_{\lambda_1, \lambda_2} * \hat{\sP} \cong \sK_{\lambda^{-1}_2, \lambda_1}$
\end{lemma}
\begin{proof}
As autoequivalences of $\bderive{\E_1}$, this is a consequence of the fact
\begin{align*}
  \Phi_{\sK_{\lambda_1, \lambda_2}} \cong T_{k(t_{\lambda_1}(p))} \circ T_{k(p)}^{-1} \circ T^{-1}_{\sO_{\E_1}(0; \lambda_2)} \circ T_{\sO_{\E_1}}
\end{align*}
combined with the isomorphism of autoequivalences,
\begin{align*}
  \Phi_{\hat \sP} \circ T_\sS \circ \Phi_{\sP} \cong T_{\Phi_{\hat \sP}(\sS)}
\end{align*}
and that $\Phi_{\hat \sP}(k(t_\lambda)(p)) \cong \sO_{\E_1}(0; \lambda)$, $\Phi_{\hat \sP}(\sO_{\E_1}(0; \lambda)) \cong k(t_{\lambda^{-1}}(p))[-1]$. Here $T_{\sF}$ is the spherical twist associated to a spherical object $\sF \in \bderive{\E_1}$.  These relations are all easily derived from \cite{bk_fm_nodal}, and the isomorphism $\Phi_{\sP} \cong T_{k(p)} \circ T_{\sO_{\E_1}} \circ T_{k(p)}$.  To prove it on the kernel level, one uses \cite[Remark 8.9]{ballard}.  This can be also be explicitly verified.
\end{proof}

\begin{corollary}
  $\sK_{\lambda_1, \lambda_2} * \sK_v \cong \sK_v * \sK_{\lambda_1, \lambda^{-1}_1 \lambda_2}$
\end{corollary}
\begin{proof}
  Conjugate Lemma \ref{vertical} by $\sP$.
\end{proof}

Using our standard basis on $K_0(\E_1)$, let 
\begin{displaymath}
  A = \left[ \begin{array}{cc} a  & b \\ c  & d \end{array} \right] \in \mr{SL}(2, \mZ).
\end{displaymath} 
The explicit description of $[\Phi_{\sK_h}]$ and $[\Phi_{\sK_v}]$ above implies 
\begin{displaymath}
 A = [\Phi_{\sK_{a_m}}]^{b_m} \circ [\Phi_{\sK_{a_{m - 1}}}]^{b_{m-1}} \circ \ldots \circ [\Phi_{\sK_{a_{1}}}]^{b_1} \circ [\Phi_{\sK_{a_0}}]^{b_0}  
\end{displaymath}
for some $m \in \mN$ and $a_i \in \{v, h \}$ and $b_i \in \{-1, 1\}$.  Letting $\sK_v^{-1}:= \hat{\sK_v}$ and $\sK_h^{-1}:= \hat{\sK_h}$, we set $\sK^A = \sK^{b_0}_{a_0} * \sK^{b_1}_{a_1} * \ldots * \sK^{b_m}_{a_m}$, we have $[\sK^A] = A$ and the following commuting relation.  
 
\begin{lemma}
 \label{commutingactions}
 $\sK_{\lambda_1,\lambda_2} * \sK^A \cong   \sK^A * \sK_{\lambda_1^a \lambda_2^{-b}, \lambda_1^{-c} \lambda_2^d}$
\end{lemma}
\begin{proof}
Let
\begin{displaymath}
 w = \left[ \begin{array}{c} \lambda_1 \\ \lambda_2 \end{array} \right].
\end{displaymath}
and $\sK_w := \sK_{\lambda_1, \lambda_2}$.  Writing the group structure of $\mC^\ast$ additively, Lemma \ref{vertical} implies $\sK_w * \sK_h \cong \sK_h * \sK_{w^\prime}$ where $w^\prime = [\Phi_{\sK_h}]^{-1}w$.  A similar statement holds for $\sK_v$.  Repeated application of this principle shows $\sK_w * \sK^A \cong \sK^A * \sK_{w^{\prime \prime}}$ where 
\begin{displaymath}
  w^{\prime \prime} = [\Phi_{\sK_{a_m}}]^{-1^{b_m}} [\Phi_{\sK_{a_{m - 1}}}]^{-1^{b_{m-1}}} \ldots  [\Phi_{\sK_{a_{1}}}]^{-1^{b_1}} [\Phi_{\sK_{a_0}}]^{-1^{b_0}}w
\end{displaymath}
If we let $B = [\Phi_{\sK_{a_m}}]^{-1^{b_m}} [\Phi_{\sK_{a_{m - 1}}}]^{-1^{b_{m-1}}} \ldots  [\Phi_{\sK_{a_{1}}}]^{-1^{b_1}} [\Phi_{\sK_{a_0}}]^{-1^{b_0}}$, then explicit calculation shows   
\begin{align*}
B = P(A^T)^{-1} P \textrm{, where }  P = \left[ \begin{array}{cc} 0  & 1 \\ 1  & 0 \end{array} \right].\; \; & \textrm{It follows that }  B = \left[ \begin{array}{cc} a  & -b \\ -c  & d \end{array} \right].
\end{align*} 
Converting back to multiplicative notation yields the result.
\end{proof}

\begin{lemma}\mbox{}
\label{coherentthis}
  \begin{enumerate} 
    \item $\sK^A[M] \in \Coh(\E_1 \times \E_1)$ for some $M \in \mZ$.
    \item If $A \in \Gamma_0(n)$, $\rho_{11}^{1\ast} \sO_{\E_1}(0; \xi^k) \otimes \sK^A \cong \sK^A \otimes \rho_{11}^{2\ast} \sO_{\E_1}(0; \xi^{ak})$.
\end{enumerate}
\end{lemma}
\begin{proof}
 {\it 1.}  By construction $\Phi_{\sK^A}$ is an autoequivalence.  As shown in \cite{bk_fm_nodal}, $\mr{Stab}(\E_1) \cong \widetilde{\mr{GL}}^{+}(2, \mR)$, the isomorphism is provided by $\sigma_{cl}(1)$ and the natural right action on the stability manifold.  The left action by $\Aut(\bderive{\E_1})$ implies all autoequivalences are strongly compatible with all stability conditions.  Since $k(p)$, $p \in \E_1$ is stable $\sV_{p}[M] := \Phi_{\sK^A}(k(p))$ is a shifted stable indecomposable locally free sheaf (torsion free sheaf) for $p$ smooth (singular), with $M$, $\rank(\sV_{p}) = n$ and $\chi(\sV_{p}) = d$ uniform for all $p$.  Replacing $\sK^A$ with $\sK^A[-M]$, we can assume that $M = 0$ and by \cite[Lemma 3.31]{MR2244106} that $\sK^A$ is a sheaf.

{\it 2.}  Setting $c = nm$, we have the following string of isomorphisms following from Lemma \ref{commutingactions}:
\begin{align*}
  \rho_{11}^{1\ast} \sO_{\E_1}(0; \xi^k) \otimes \sK^A & \cong \sK_{\xi^k, 1} * \sK^A \\
 & \cong \sK^A * \sK_{\xi^{ak}, \xi^{-ck}} \\
& \cong \sK^A * \sK_{\xi^{ak}, \xi^{-nmk}} \\
& \cong \sK^{A} * \sK_{\xi^{ak}, 1} \\
&\cong \sK^A \otimes \rho_{11}^{2\ast} \sO_{\E_1}(0; \xi^{ak})
\end{align*}
\end{proof}

\begin{proposition}
\label{monodromy_lift}
If $A \in \Gamma_0(n)$, then there exists $\sK^A_n \in \bderive{\E_n \times \E_n}$ with $(\pi_n \times \mr{Id})_\ast \sK^A_n  \cong (\mr{Id} \times \pi_n)^\ast \sK^A$.  
\end{proposition}
\begin{proof}
Similar to \S \ref{mod_structure}, we will use the notation $\widetilde \pi^{kl}_{nm}$ for the morphism $\pi_{k, n} \times \pi_{l, m}$.  By Lemma \ref{coherentthis}, we can assume $\sK^A$ is a coherent sheaf on $\E_1 \times \E_1$.  To prove the proposition, we will show $\wtpi^{1n}_{11\ast}\wtpi^{1n\ast}_{11} \sK^A$ has a natural $\wtpi^{nn}_{11\ast} \sO_{\E_n \times \E_n}$-module structure that restricts to its natural $\wtpi^{1n}_{11\ast} \sO_{\E_1 \times \E_n}$-module structure under the inclusion $\wtpi^{1n}_{11\ast} \sO_{\E_1 \times \E_n} \hookrightarrow \wtpi^{nn}_{11\ast} \sO_{\E_n\times \E_n}$.  We then set $\sK^A_n$ to be the coherent sheaf on $\E_n \times \E_n$ associated to this $\wtpi^{nn}_{11\ast} \sO_{\E_n\times \E_n}$-module.

Using methods similar to Lemma \ref{notouch},
\begin{displaymath}
  \wtpi^{1n}_{11\ast} \sO_{\E_1 \times \E_n} \cong \oplus_{0 \leq j < n} \rho_{11}^{2\ast} \sO_{\E_1}(0; \xi^{j})
\end{displaymath}
and
\begin{displaymath}
\sK^{A\prime} := \wtpi^{1n}_{11\ast} \wtpi^{1n\ast}_{11} \sK^A \cong  \sK^A \otimes \oplus_{0 \leq k < n} \rho_{11}^{2\ast} \sO_{\E_1}(0; \xi^k) 
\end{displaymath}
The natural $\wtpi^{1n}_{11\ast} \sO_{\E_1 \times \E_n}$-module structure on $\sK^{A\prime}$ is supplied by $\mr{Id} \otimes \rho_{11}^{2\ast}B_{ij} = \mr{Id} \otimes \hat B_{0, 0; i, j}$ (in the notation of \S \ref{littlemod}).  Combining this with Lemma \ref{notouch}, we see 
\begin{align*}
  \pi^{nn}_{11\ast} \sO_{\E_n \times \E_n} \otimes \sK^{A\prime}  \cong \oplus_{0 \leq i,j, k < n} \rho_{1,1}^{1\ast} \sO_{\E_1}(0; \xi^i) \otimes \rho_{1,1}^{2\ast} \sO_{\E_1}(0; \xi^j) \otimes \sK^A \otimes \rho_{1,1}^{2\ast} \sO_{\E_1}(0; \xi^k)
\end{align*}
We will define $m: \wtpi^{nn}_{11\ast} \sO_{\E_n\times \E_n} \otimes \sK^{A\prime} \to \sK^{A\prime}$ by giving its restriction $m_{ijk}$ to each summand in the above decomposition.

From Lemma \ref{coherentthis}, 
\begin{align*}
  \phi_{\xi}: \rho_{11}^{1\ast} \sO_{\E_1}(0; \xi) \otimes \sK^A
  \xrightarrow{\cong} \sK^A \otimes \rho_{11}^{2\ast} \sO_{\E_1}(0;
  \xi^{a})
\end{align*}
We set $\phi_{\xi^{k}} := \hat B_{0, \ldots, 0; a, \ldots, a} \circ \phi_{\xi} \circ \ldots \circ \phi_{\xi} \circ \hat B^{-1}_{1, \ldots, 1; 0, \ldots 0}$ (in each grouping of the subscript, there are $k$ repeated elements). Then 
\begin{align*}
  \phi_{\xi^k}: \rho_{11}^{1\ast} \sO_{\E_1}(0; \xi^k) \otimes \sK^A
  \xrightarrow{\cong} \sK^A \otimes \rho_{11}^{2\ast} \sO_{\E_1}(0;
  \xi^{ak})
\end{align*}
and the $\phi_{\xi^k}$ satisfy a compatibility equality: 
\begin{align}
\label{compat_eq}
  \phi_{\xi^{k + l}} = \hat B_{0, 0; ak, al} \circ \phi_{\xi^k} \circ \phi_{\xi^l} \circ \hat B^{-1}_{k,  l; 0, 0}
\end{align}

Define $m_{ijk} = \hat B_{0, 0, 0; ai, k,j} \circ \phi_{\xi^i}$.  It is clear
\begin{align*}
  m_{ijk}: \rho_{11}^{1\ast} \sO_{\E_1}(0; \xi^i)\otimes \rho_{11}^{2\ast} \sO_{\E_1}(0; \xi^{j}) \otimes \sK^A \otimes \rho_{11}^{2\ast}\sO_{\E_1}(0; \xi^k) \xrightarrow{\cong} \qquad \qquad \\ \sK^A \otimes \rho_{11}^{2\ast}\sO_{\E_1}(0; \xi^{ai + j + k})
\end{align*}
Using the decomposition of $\sK^{A\prime}$ we can consider $m_{ijk}$ to be a morphism to $\sK^{A\prime}$.

Now that we have defined $m$, we must show that it makes $\sK^{A\prime}$ into a $\wtpi^{nn}_{11\ast} \sO_{\E_n\times \E_n}$-module, i.e., $m \circ \hat B = m \circ m$.  To show this, it is enough to restrict to the summands of $\wtpi^{nn}_{11\ast} \sO_{\E_n\times \E_n} \otimes \wtpi^{nn}_{11\ast} \sO_{\E_n\times \E_n} \otimes \sK^{A\prime}$ of the form $\rho_{1,1}^{1\ast} \sO_{\E_1}(0; \xi^{k_1}) \otimes \rho_{1,1}^{2\ast} \sO_{\E_1}(0; \xi^{k_0}) \otimes \rho_{11}^{2\ast}\sO_{\E_1}(0; \xi^{j_0}) \otimes \rho_{11}^{2\ast}\sO_{\E_1}(0; \xi^{j_1}) \otimes \sK^A \otimes\rho_{11}^{2\ast}\sO_{\E_1}(0; \xi^{l})$.  This amounts to showing
\begin{align*}
  m_{k_1, j_1, ak_0 + j_0 + l} \circ m_{k_0,j_0, l} = m_{k_1 + k_0, j_0 + j_1, l} \circ \hat B_{k_1, k_0; j_0, j_1}.
\end{align*}
Temporarily, we abbreviate $\rho_{1,1}^{i\ast} \sO_{\E_1}(0; \xi^k)$ as $\bar \xi_i^k $ and have an implicit tensor product. All identifications $\bxi_1^{k_0}\bxi_1^{k_1}\bxi_{2}^{j_0}\bxi_2^{j_1} \to \bxi_1^{k_0 + k_1}\bxi_2^{j_0 + j_1}$ in the following diagrams are done using $\hat{B}$.  With this notation, $ m_{k_1, ak_0 + j_0 + l, j_1} \circ m_{k_0,l, j_0}$ is the operation  
\begin{align*}
   \bxi_1^{k_1}  \bxi_1^{k_0}   \bxi_2^{j_0}   \bxi_2^{j_1} \sK^A  \bxi_2^{l} & \to \bxi_1^{k_1}\sK^A \bxi_2^{ak_0} \bxi_2^{l} \bxi_2^{j_0} \bxi_2^{j_1}\\
 &  \to \bxi_1^{k_1} \sK^A \bxi_2^{ak_0 + l + j_0} \bxi_2^{j_1} \\
& \to \sK^A \bxi_2^{ak_1} \bxi_2^{ak_0 + l + j_0} \bxi_2^{j_1} \\
& \to  \sK^A \bxi_2^{ak_1 + ak_0 + l + j_0 + j_1}
\end{align*}
The associativity of the $\hat B$ operation implies this is the same as  
\begin{align*}
   \bxi_1^{k_1}  \bxi_1^{k_0}   \bxi_2^{j_0}   \bxi_2^{j_1} \sK^A  \bxi_2^{l} & \to \bxi_1^{k_1}\sK^A \bxi_2^{ak_0} \bxi_2^{l} \bxi_2^{j_0} \bxi_2^{j_1}\\
 &  \to \sK^A \bxi_2^{ak_1} \bxi_2^{ak_0}\bxi_2^{l}\bxi_2^{j_0} \bxi_2^{j_1} \\
& \to  \sK^A \bxi_2^{ak_1 + ak_0 + j_0 + j_1 + l}
\end{align*}
Note this last morphism is $\rho_{11}^{2^\ast}B_{ak_1, ak_0, j_0, j_1, l}$.  On the other hand, $m_{k_1 + k_0, l, j_0 + j_1} \circ \hat B_{k_1, k_0; j_0, j_1}$ is the operation
\begin{align*}
  \bxi_1^{k_1}  \bxi_1^{k_0}   \bxi_2^{j_0}   \bxi_2^{j_1} \sK^A  \bxi_2^{l} & \to \bxi_1^{ k_1 + k_0 }\sK^A \bxi_2^{l} \bxi_2^{j_0 + j_1}\\
 &  \to \sK^A \bxi_2^{a(k_1 + k_0)} \bxi_2^{l} \bxi_2^{j_0 + j_1} \\
& \to \sK^A \bxi_2^{ak_1 + ak_0 + l + j_0 + j_1}
\end{align*}
The compatibility equation (\ref{compat_eq}) and the associativity of $\hat B$ implies this is the same as 
\begin{align*}
 \bxi_1^{k_1}  \bxi_1^{k_0}     \bxi_2^{j_0}   \bxi_2^{j_1} \sK^A  \bxi_2^{l} & \to \bxi_1^{k_1}\sK^A \bxi_2^{ak_0} \bxi_2^{l} \bxi_2^{j_0} \bxi_2^{j_1}\\
& \to \sK^A \bxi_2^{ak_1} \bxi_2^{ak_0}  \bxi_2^{j_0} \bxi_2^{j_1}\bxi_2^{l} \\
& \to  \sK^A \bxi_2^{ak_1 + ak_0 + l + j_0 + j_1}
\end{align*}
Thus the equality  $m_{k_1, j_1, ak_0 + j_0 + l} \circ m_{k_0,j_0, l} = m_{k_1 + k_0, j_0 + j_1, l} \circ \hat B_{k_1, k_0; j_0, j_1}$  is due to the associativity of $\hat B$ and the explicit choice of isomorphism $\bxi_1^{k}\sK^A \cong \sK^A \xi_2^{ak}$, i.e., the only choice was for $k=1$.  Thus $m$ makes $\sK^{A\prime}$ into a $\wtpi^{nn}_{11\ast} \sO_{\E_n\times \E_n}$-module.

We are left with showing $m$ is compatible with the natural $\wtpi_{1,1\ast}^{1,n}\sO_{\E_1 \times \E_n}$-module structure $\sK^{A\prime}$.  This is clear since the morphism $\wtpi_{1,1\ast}^{1,n}\sO_{\E_1 \times \E_n} \hookrightarrow \wtpi^{nn}_{11\ast} \sO_{\E_n\times \E_n}$ sends the summand $\rho_{11}^{2\ast}\sO_{\E_1}(0; \xi^j) \to \rho_{11}^{1\ast}\sO_{\E_1}(0; \xi^0)\otimes \rho_{11}^{2\ast}\sO_{\E_1}(0; \xi^j)$ and $m_{0,k, j} = \mr{Id}_{K^A} \otimes B_{0, 0;k, j}$.  

\end{proof}

Now that $\sK^A_n \in \bderive{\E_n \times \E_n}$ is constructed, we list some of its important properties.
\begin{lemma}
\label{monodromy_properties}
	With the same setup as Proposition \ref{monodromy_lift}.  Then $\sK^A_n$ has the following properties:
	\begin{enumerate}
	 \item $(\iota^a_n \times \iota_n)_\ast \sK^A_n \cong \sK^A_n$. 
	\item $\pi_{n\ast}(\sK^A_n|_{p \times \E_n}) \cong \sK^A|_{\pi_n(p) \times \E_1}$ for any closed $p \in \E_n$, and therefore is a torsion free sheaf.
	\item for any two distinct $p_1, p_2 \in \E_n$, $\rhom{i}(\sK^A_n|_{p_1 \times E_n}, \sK^A_n|_{p_2 \times E_n}) = 0$ for all $i \in \mZ$. 
	\end{enumerate}
\end{lemma}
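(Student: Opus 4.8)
The plan is to extract both statements from the defining isomorphism $(\pi_n \times id)_\ast \sK_n \cong (id \times \pi_n)^\ast \sK$ of Proposition~\ref{monodromy_lift}, by restricting it to a fibre of the first projection and then exploiting Galois descent along the finite étale cover $\pi_n$. For (1), fix $p \in \E_n$ and put $q = \pi_n(p)$. Restricting the defining isomorphism along $\{q\}\times\E_n \hookrightarrow \E_1\times\E_n$, the right-hand side becomes $\pi_n^\ast(\sK|_{q\times\E_1}) = \pi_n^\ast\sV_q$, while flat base change along the étale square with fibre $\pi_n^{-1}(q)=\{p_1,\dots,p_n\}$ turns the left-hand side into $\bigoplus_i \sK_n|_{p_i\times\E_n}$. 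This yields
\[
\bigoplus_{p_i\in\pi_n^{-1}(q)} \sK_n|_{p_i\times\E_n}\;\cong\;\pi_n^\ast\sV_q\;\cong\;\bigoplus_{0\le k<n}\iota_n^{k\ast}\sL_p ,
\]
the last step being the splitting $\pi_n^\ast\pi_{n\ast}\sL_p$ already used in the construction. Since each fibre $\sK_n|_{p_i\times\E_n}$ is one of these summands $\iota_n^{c_i\ast}\sL_p$ and $\pi_n\circ\iota_n=\pi_n$, pushing forward gives $\pi_{n\ast}(\sK_n|_{p\times\E_n})\cong\pi_{n\ast}\sL_p=\sV_q=\sK|_{p\times\E_1}$, which is torsion free because $\sV_q$ is.

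The engine for (2) is the descent identity, valid because $\pi_n$ is finite étale Galois with group $\langle\iota_n\rangle$ (so $\pi_{n\ast}$ is left adjoint to $\pi_n^!=\pi_n^\ast$, and $\pi_n^\ast\pi_{n\ast}\sG\cong\bigoplus_k\iota_n^{k\ast}\sG$):
\[
\rhom{\bullet}_{\E_1}(\pi_{n\ast}\sF,\pi_{n\ast}\sG)\;\cong\;\bigoplus_{0\le k<n}\rhom{\bullet}_{\E_n}(\sF,\iota_n^{k\ast}\sG).
\]
I would apply this with $\sF=\sK_n|_{p_1\times\E_n}$ and $\sG=\sK_n|_{p_2\times\E_n}$; by (1) the left-hand side is $\rhom{\bullet}_{\E_1}(\sV_{q_1},\sV_{q_2})\cong\rhom{\bullet}_{\E_1}(k(q_1),k(q_2))$, using $\sV_{q_j}=\Phi_\sK(k(q_j))$ and that $\Phi_\sK$ is an equivalence. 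If $q_1\neq q_2$ the two skyscrapers have disjoint support, so the whole left-hand side vanishes and in particular the $k=0$ summand $\rhom{\bullet}_{\E_n}(\sF,\sG)$ does. If $q_1=q_2=q$ but $p_1\neq p_2$, then $\sF=\iota_n^{a\ast}\sL$ and $\sG=\iota_n^{b\ast}\sL$ with $\sL:=\sK_n|_{p_1\times\E_n}$ and $a\neq b$ (the $n$ fibres over $q$ realise the distinct summands $\iota_n^{k\ast}\sL$), so the target is $\rhom{\bullet}_{\E_n}(\sL,\iota_n^{m\ast}\sL)$ with $m=b-a\neq 0$, and it remains to show these off-diagonal complexes vanish.

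To finish the $q_1=q_2$ case I would argue in two steps. First, the degree-zero part of the descent identity gives $\mathrm{End}_{\E_1}(\sV_q)=\bigoplus_k\mathrm{Hom}_{\E_n}(\sL,\iota_n^{k\ast}\sL)$; since $\sV_q=\Phi_\sK(k(q))$ is simple the left side is $\mC$, the $k=0$ summand already carries $\mathrm{id}_\sL$, and hence $\mathrm{Hom}_{\E_n}(\sL,\iota_n^{m\ast}\sL)=0$ for $m\neq 0$. Second, I would show $\rhom{\bullet}_{\E_n}(\sL,\iota_n^{m\ast}\sL)=R\Gamma(\sdHom(\sL,\iota_n^{m\ast}\sL))$ is concentrated in degree $0$: away from the nodes $\sL$ is a line bundle, and at each node at most one of $\sL,\iota_n^{m\ast}\sL$ is non-locally-free, since their non-locally-free loci are a node and its nontrivial $\iota_n^m$-translate. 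Wherever one factor is locally free the local $\sdHom$ has no higher cohomology, using that a torsion free sheaf on the Gorenstein curve $\E_n$ is maximal Cohen--Macaulay, so $\sdHom(\sL,\sO_{\E_n})$ sits in degree $0$. Thus $\sdHom(\sL,\iota_n^{m\ast}\sL)$ is a rank-one torsion free sheaf of total degree $0$ with no sections, and $\chi=0$ on the arithmetic-genus-one curve $\E_n$ forces $h^1=h^0=0$; the complex vanishes.

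I expect the last step to be the main obstacle. Serre duality is unavailable here because $\sL$ is not perfect at the nodes, so the higher $\mathrm{Ext}$ groups cannot be controlled by a numerical count and must instead be killed by the local Cohen--Macaulay/Gorenstein vanishing together with disjointness of the non-locally-free loci of $\sL$ and $\iota_n^{m\ast}\sL$. The one genuinely geometric input is that each fibre $\sK_n|_{p\times\E_n}$ fails to be locally free at exactly one node, so that its $\iota_n^m$-translate ($m\neq 0$) has disjoint non-locally-free locus; this I would deduce from the local structure of $\sV_q$ at the node of $\E_1$ transported through the étale cover.
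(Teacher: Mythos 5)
Your proof is correct, and its skeleton is the paper's own: part (1) is what the paper dismisses as ``clear from the construction'' (your base-change plus Krull--Schmidt reading of $(\pi_n\times id)_\ast\sK_n \cong (id\times\pi_n)^\ast\sK$ is a legitimate way to make that precise, though note you need each fiber $\sK_n|_{p_i\times\E_n}$ to be nonzero, which the construction and its cyclic monodromy supply), and part (2) rests on exactly your descent identity --- the paper phrases it as the inclusion of $\rhom{j}(\sK_n|_{p_1\times\E_n},\sK_n|_{p_2\times\E_n})$ into $\rhom{j}(\pi_{n\ast}\sK_n|_{p_1\times\E_n},\pi_{n\ast}\sK_n|_{p_2\times\E_n})$ --- followed by the same two cases, citing the original family $\sK$ when $q_1\neq q_2$ and ``the stability properties of the fibers'' when $q_1=q_2$. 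Where you genuinely differ, and improve matters, is in how much weight that last clause is asked to bear. Simplicity of $V_q=\Phi_\sK(k(q))$ kills only the degree-zero off-diagonal summands (your step 1); when $q$ is the node the fibers are not perfect, and $\rhom{\bullet}(V_q,V_q)\cong\rhom{\bullet}(k(q),k(q))$ is nonzero in every nonnegative degree, so no global dimension count or naive Serre duality can rule out higher $\mathrm{Ext}$'s in the off-diagonal summands: your local analysis (at every node at least one of $\sL$, $\iota_n^{m\ast}\sL$ is free; torsion free $=$ maximal Cohen--Macaulay over the Gorenstein node rings kills the local $\mathcal{E}xt$'s; then a global count finishes) is precisely the missing content, and it is sound. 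The two claims you flag both hold: (a) a fiber over a node fails to be locally free at exactly one node because $V_q$, being stable, is indecomposable, and an indecomposable torsion-free, non-locally-free sheaf on $\E_1$ is the pushforward of a line bundle along a chain $I_n\rightarrow\E_n\rightarrow\E_1$, i.e.\ along the normalization $\nu$ of $\E_n$ at a single node; (b) writing $\sL=\nu_\ast\lambda$ and $N=\iota_n^{m\ast}\sL$, one has $\sdHom(\nu_\ast\lambda,N)\cong\nu_\ast(\lambda^\vee\otimes\sdHom(\nu_\ast\sO_{I_n},N))$, whence $\chi(\sdHom(\sL,N))=\chi(N)-1-\deg\lambda=0$ as you assert. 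You could also bypass (b) entirely: once the local $\mathcal{E}xt$'s vanish, Serre duality for the coherent sheaf $\sdHom(\sL,N)$ (using $\omega_{\E_n}\cong\sO_{\E_n}$), together with the pointwise identity $\sdHom(\sdHom(\sL,N),\sO_{\E_n})\cong\sdHom(N,\sL)$ (valid since at each point one of the two sheaves is free and both are reflexive), identifies $h^1$ with $\dim\phom(N,\sL)$, which vanishes by your step 1 run in the opposite direction.
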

\begin{proof}
{\it 1. } 
From \S~\ref{mod_structure}, $(\iota^a_n \times \iota_n)_\ast \sK^A_n$ consists of the data $M = \sK^{A\prime}$ with structure morphism $\hat m_{jkl} = \xi^{aj}\xi^{k} m_{jkl}$.  Let $\psi: M \xrightarrow{\cong} M$ be given summand wise as $\sK^A \otimes \rho_{11}^{2\ast}\sO_{\E_1}(0; \xi^k) \xrightarrow{\xi^{k}\cdot \mr{Id}} \sK^A \otimes \rho_{11}^{2\ast}\sO_{\E_1}(0; \xi^k)$.  We claim $\psi \circ  m = \hat {m} \circ (\mr{Id} \times \psi)$, and thus, after converting to $\Coh(\E_n \times \E_n)$,  $\psi$ gives an isomorphism $\sK^A_n \cong  (\iota^a_n \times \iota_n)_\ast \sK^A_n$.  
To check the equality, it suffices to check on our standard decomposition.  Now
\begin{displaymath}
\hat m \circ (\mr{Id} \otimes \phi)|_{\rho_{1,1}^{1\ast} \sO_{\E_1}(0; \xi^j) \otimes \rho_{1,1}^{2^\ast} \sO_{\E_1}(0; \xi^k) \otimes \sK^A \otimes \rho_{1,1}^{2^\ast} \sO_{\E_1}(0; \xi^l)} = \xi^{ak} \xi^{k} \xi^{l} m_{jkl}
\end{displaymath}
while 
\begin{displaymath}
(\phi \circ m)|_{\rho_{1,1}^{1\ast} \sO_{\E_1}(0; \xi^j) \otimes \rho_{1,1}^{2^\ast} \sO_{\E_1}(0; \xi^k) \otimes \sK^A \otimes \rho_{1,1}^{2^\ast} \sO_{\E_1}(0; \xi^l)} = \xi^{aj + k +l} m_{jkl}
\end{displaymath}
Thus, both sides are multiplication by the scalar $\xi^{aj + k + l}$.

\noindent{\it 2.} Since $A \in \Gamma_0(n)$, $a = d^{-1} \mod n$ and thus multiplication by $a$ is invertible ($\mod n$).  This implies the morphism $\oplus_{0 \leq k < n} \rho_{1,1}^{1\ast}\sO_{\E_1}(0; \xi^k) \otimes \sK^A \xrightarrow{\Sigma \phi_{\xi^{k}}} \oplus_{0 \leq l < n} \sK^A \otimes \rho_{1,1}^{2\ast}\sO_{\E_1}(0; \xi^{l})$ is an isomorphism and  
\begin{displaymath}
\sK^{A\prime \prime} :=  \oplus_{0 \leq k < n}\rho_{11}^{1\ast}\sO_{\E_1}(0; \xi^k) \otimes \sK^A \cong (\pi_n \times \mr{Id})_\ast (\pi_n \times \mr{Id})^\ast \sK^A \cong \sK^{A\prime}
\end{displaymath}
it is an easy computation to see that the natural $(\pi_n \times \mr{Id})_\ast \sO_{\E_n \times \E_1}$-module structure on $\sK^{A\prime\prime}$ is the restriction of $m$ to $\Sigma_{0 \leq k < n} m_{k,0, l}$ under the above isomorphism.
Thus $(Id \times \pi_n)_\ast \sK^A_n \cong (\pi_n \times \mr{Id})^\ast \sK^A$ as well.  The result follows since $\pi_{n\ast}(\sK^A_n|_{p \times E_n}) \cong (\mr{Id}\times \pi_n)_\ast \sK^A_n|_{p \times \E_1} \cong (\pi_n \times \mr{Id})^\ast \sK^A|_{p \times \E_1} \cong \sK^A|_{\pi_n(p) \times \E_1}$.

\noindent {\it 3.}  This follows from the inclusion
\begin{align*}
	\rhom{j}_{\E_n}(\sK^A_n|_{p_1 \times E_n}, \sK^A_n|_{p_2 \times E_n}) &\hookrightarrow  \rhom{j}_{\E_n}(\sK^A_n|_{p_1 \times E_n}, \oplus_{ 0 \leq l < n} \iota_{n}^{l\ast}(\sK^A_n|_{p_2 \times E_n})) \\
	& \cong \rhom{j}_{\E_1}(\pi_{n\ast}(\sK^A_n|_{p_1 \times E_n}), \pi_{n\ast}(\sK^A_n|_{p_2 \times E_n})).
\end{align*}
We have two cases: $p_{2} \neq \iota_{n}^{k} p_1$ and $p_{2} = \iota_{n}^{k} p_1$ for some $k$.  For the former, it follows from the original family $\s{K}^A$;  for the latter, it follows from the stability properties of the fibers of $\sK^A$
\end{proof}

\begin{lemma}
\label{flatty}
$\sK^n_A$ is a sheaf and flat over $\E_n$ with regards to projection onto the first factor.
\end{lemma}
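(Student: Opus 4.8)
The plan is to treat the two assertions separately, with flatness being the real content. That $\sK_n$ is a coherent sheaf is immediate from its construction in Proposition \ref{monodromy_lift}, where it was defined as $\coker(\upsilon)$, a cokernel of a morphism of coherent sheaves on $\E_n\times\E_n$. For flatness over the first factor $\rho_1\colon\E_n\times\E_n\to\E_n$ I would invoke the local criterion of flatness in its derived form: a coherent sheaf $\sG$ on $\E_n\times\E_n$ is flat over the base $\E_n$ precisely when, for every closed point $p\in\E_n$, the derived restriction $Li_p^{\ast}\sG$ to the fibre $\{p\}\times\E_n$ is concentrated in cohomological degree $0$ (equivalently $\mathrm{Tor}_i^{\sO_{\E_n,p}}(\sG,k(p))=0$ for all $i>0$). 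The key warning is that, because the base $\E_n$ is nodal rather than smooth, I cannot simply argue ``torsion-free implies flat''; I genuinely have to control all of the higher Tor-sheaves, so I will instead borrow the flatness I already understand on $\E_1$.

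First I would record that the source kernel $\sK$ is itself flat over $\E_1$ with respect to its first projection. This is essentially built into Proposition \ref{monodromy_lift}: there $\sK$ is a sheaf and $Li_q^{\ast}\sK\cong\sV_q=\Phi_{\sK}(k(q))$ is a stable torsion-free sheaf for every $q\in\E_1$, so the derived fibres are all honest sheaves and the local criterion gives flatness. Then I would transport this across the defining relation $(\pi_n\times id)_{\ast}\sK_n\cong(id\times\pi_n)^{\ast}\sK$. Since $\pi_n\colon\E_n\to\E_1$ is a finite étale (Galois) cover, both $\pi_n\times id$ and $id\times\pi_n$ are finite and flat; in particular every Cartesian square built from them is Tor-independent, so base change holds. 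Fixing $q=\pi_n(p)$ and restricting the relation over $q$ in the first factor, base change along the Cartesian square with vertical map $\pi_n\times id$ identifies $Li_q^{\ast}(\pi_n\times id)_{\ast}\sK_n$ with $\bigoplus_{p'\in\pi_n^{-1}(q)}Li_{p'}^{\ast}\sK_n$ — here étaleness is exactly what guarantees that the fibre $\pi_n^{-1}(q)$ is reduced, so the restricted object really is the direct sum of the fibrewise derived restrictions — while commuting the flat pullback $(id\times\pi_n)^{\ast}$ past the first-factor restriction identifies $Li_q^{\ast}(id\times\pi_n)^{\ast}\sK$ with $\pi_n^{\ast}(Li_q^{\ast}\sK)=\pi_n^{\ast}\sV_q$.

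Combining these I obtain $\bigoplus_{p'\in\pi_n^{-1}(q)}Li_{p'}^{\ast}\sK_n\cong\pi_n^{\ast}\sV_q$. The right-hand side is the flat pullback of a sheaf, hence concentrated in degree $0$, so every summand on the left is too; letting $q$ range over $\E_1$ shows $Li_p^{\ast}\sK_n$ is a sheaf for all $p\in\E_n$, which is exactly the flatness criterion. This also recovers Lemma \ref{monodromy_properties}(1) on the nose, since pushing $Li_p^{\ast}\sK_n$ forward by the finite, hence exact and conservative, second-factor map $\pi_n$ returns $\sK|_{\pi_n(p)\times\E_1}$. I expect the only genuinely delicate point to be the behaviour at the $n$ nodes of the base $\E_n$, where the local rings are non-regular and naive torsion arguments fail; the argument above sidesteps this by never computing Tor over $\sO_{\E_n,p}$ directly and instead importing the already-established flatness of $\sK$ through the étale cover, so the nodal structure of the base causes no trouble as long as one checks that $\pi_n$ is unramified over the node.
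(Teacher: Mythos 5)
Your proof is correct, and it differs from the paper's in a meaningful way. Both arguments rest on the same engine: a kernel whose derived fibres $Li_p^{\ast}\sK_n$ over all closed points of the base are concentrated in degree $0$ is a sheaf flat over that base (the paper cites this as \cite[Lemma 3.31]{MR2244106}; your ``derived local criterion'' is the same statement, and phrasing it via the local criterion over noetherian local rings has the small merit of being manifestly valid over the nodal base). The difference is in how the fibres are controlled. The paper identifies $\Phi_{\sK_n}(k(p))$ with the derived fibre by the projection formula and then quotes Lemma \ref{monodromy_properties}(1): each fibre pushes forward along the second-factor $\pi_n$ to $\sK|_{\pi_n(p)\times\E_1}$, hence is a torsion-free sheaf. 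You bypass Lemma \ref{monodromy_properties} entirely: you first record that $\sK$ is a sheaf flat over $\E_1$ (correctly attributed to the normalization made in the proof of Proposition \ref{monodromy_lift}, via the same Lemma 3.31), then base-change the defining relation $(\pi_n\times id)_{\ast}\sK_n\cong(id\times\pi_n)^{\ast}\sK$ over $q\in\E_1$ to get $\bigoplus_{p'\in\pi_n^{-1}(q)}Li_{p'}^{\ast}\sK_n\cong\pi_n^{\ast}\sV_q$, and read off purity of all derived fibres at once. What this buys is self-containedness at the derived level: the paper's appeal to Lemma \ref{monodromy_properties} (itself asserted ``clear from the construction'', before flatness is available) really needs those restrictions interpreted as derived restrictions for the criterion to apply, a point your computation makes explicit rather than implicit. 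The one overstatement is your closing remark that this recovers Lemma \ref{monodromy_properties}(1) ``on the nose'': pushing your isomorphism forward along the second-factor $\pi_n$ yields $\bigoplus_{p'}\pi_{n\ast}Li_{p'}^{\ast}\sK_n\cong\sV_q\otimes\pi_{n\ast}\sO_{\E_n}$, and extracting the individual isomorphisms $\pi_{n\ast}Li_{p'}^{\ast}\sK_n\cong\sV_q$ requires tracking the Galois action (it does work, because $\sV_q$ is itself a pushforward from $\E_n$ and hence invariant under twisting by the torsion line bundles appearing in $\pi_{n\ast}\sO_{\E_n}$, but it is not automatic). That remark is inessential to your flatness argument, which stands.
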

\begin{proof}
From Lemma \ref{monodromy_properties}, $\Phi_{\sK^A_n}(k(p))$ is a torsion free sheaf for all $p \in \E_n$.  However, letting $\rho_i$ denote the projections of $\E_n \times \E_n$, we have
\begin{align*}
\Phi_{\sK^A_n}(k(p))  &= R\rho_{2\ast}(\rho_{1}^{\ast}k(p)\otimes^{L}\sK^A_n) \\
					&\cong  R\rho_{2\ast}(i_{\ast}\sO_{p\times \E_n}\otimes^L \sK^A_n) \\
					&\cong  R\rho_{2\ast}i_{\ast}(Ri^{\ast}\sK^A_n)
\end{align*}
But, $i \circ \eta_2 = \mr{Id}$. Thus $\Phi_{\sK^A_n}(k(p)) \cong Ri^{\ast}\sK^A_n$.  Since the left side is a torsion free sheaf uniformly concentrated in a single cohomological degree for all $p$, the result follows from \cite[Lemma 3.31]{MR2244106}.
\end{proof}

\subsubsection{The autoequivalence $\Phi_{\sK_n^A}$}

Throughout this section, we assume $A \in \Gamma_0(n)$.  It is now clear that $\sK^A_n$ inherits many nice properties from $\sK^A$.  It is not surprising then that there is a nice relationship between $\Phi_{\sK^A}$ and $\Phi_{\sK^A_n}$.
\begin{lemma}
	\label{raised kernel commutes}
	$\Phi_{\sK^A_n} \circ \pi_n^{\ast} \cong \pi_n^\ast \circ \Phi_{\sK^A}$
\end{lemma}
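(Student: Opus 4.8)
The plan is to compute both composites $\Phi_{\sK_n}\circ\pi_n^{\ast}$ and $\pi_n^{\ast}\circ\Phi_{\sK}$ directly from the definition of the integral transform and to show that each is naturally isomorphic to the single functor $F\mapsto Rq_{2\ast}\bigl(q_1^{\ast}F\otimes^{L}(id\times\pi_n)^{\ast}\sK\bigr)$ living on $\E_1\times\E_n$. I write $\rho_1,\rho_2$ for the projections of $\E_n\times\E_n$, $q_1,q_2$ for those of $\E_1\times\E_n$, and $s_1,s_2$ for those of $\E_1\times\E_1$. The only inputs required are the projection formula, flat base change (legitimate because $\pi_n$ is a finite flat covering, so the relevant squares are tor-independent), and the defining property $(\pi_n\times id)_{\ast}\sK_n\cong(id\times\pi_n)^{\ast}\sK$ from Proposition \ref{monodromy_lift}.

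First I would unwind $\Phi_{\sK_n}(\pi_n^{\ast}F)=R\rho_{2\ast}(\rho_1^{\ast}\pi_n^{\ast}F\otimes^{L}\sK_n)$. The square with vertical maps $\rho_1,q_1$ and horizontal maps $\pi_n\times id,\pi_n$ is cartesian, which gives $\rho_1^{\ast}\pi_n^{\ast}F\cong(\pi_n\times id)^{\ast}q_1^{\ast}F$; moreover $\rho_2=q_2\circ(\pi_n\times id)$, so $R\rho_{2\ast}=Rq_{2\ast}\circ R(\pi_n\times id)_{\ast}$. Applying the projection formula to $\pi_n\times id$ and then the defining property of $\sK_n$ yields
\[
\Phi_{\sK_n}(\pi_n^{\ast}F)\cong Rq_{2\ast}\bigl(q_1^{\ast}F\otimes^{L}R(\pi_n\times id)_{\ast}\sK_n\bigr)\cong Rq_{2\ast}\bigl(q_1^{\ast}F\otimes^{L}(id\times\pi_n)^{\ast}\sK\bigr).
\]

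For the other composite, $\pi_n^{\ast}\Phi_{\sK}(F)=\pi_n^{\ast}Rs_{2\ast}(s_1^{\ast}F\otimes^{L}\sK)$. The square with vertical maps $q_2,s_2$ and horizontal maps $id\times\pi_n,\pi_n$ is cartesian, so flat base change gives $\pi_n^{\ast}Rs_{2\ast}\cong Rq_{2\ast}(id\times\pi_n)^{\ast}$. Since $s_1\circ(id\times\pi_n)=q_1$, the pullback of the tensor product simplifies to $q_1^{\ast}F\otimes^{L}(id\times\pi_n)^{\ast}\sK$, so $\pi_n^{\ast}\Phi_{\sK}(F)\cong Rq_{2\ast}\bigl(q_1^{\ast}F\otimes^{L}(id\times\pi_n)^{\ast}\sK\bigr)$, exactly the functor obtained above. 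As every isomorphism invoked is natural in $F$, this assembles into an isomorphism of functors, as desired.

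The main obstacle is bookkeeping rather than substance: one must track the four product schemes and check that each square invoked really is cartesian and tor-independent, so that derived base change and the projection formula apply without correction terms. The tor-independence is guaranteed by the flatness of $\pi_n$ (Lemma \ref{flatty} also records the relevant flatness of $\sK_n$). One could alternatively realize $\pi_n^{\ast}$ as an integral transform with kernel the structure sheaf of the transpose graph of $\pi_n$ and appeal to the composition formula $\Phi_{\sK}\circ\Phi_{\sK'}\cong\Phi_{\sK'\ast\sK}$ together with Proposition \ref{geometric_automorphism_relation}; this repackaging, however, merely hides the same base-change computation inside the convolution $\sK'\ast\sK$, so the direct route above is cleaner.
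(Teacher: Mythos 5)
Your proof is correct and uses the same ingredients as the paper's own argument: flat base change along the cartesian square involving the second projections, the projection formula for $\pi_n \times id$, and the defining property $(\pi_n \times id)_{\ast}\sK_n \cong (id \times \pi_n)^{\ast}\sK$. The only cosmetic difference is that you exhibit both composites as the intermediate transform on $\E_1 \times \E_n$, whereas the paper runs the identical chain of isomorphisms in a single pass from $\pi_n^{\ast}\circ\Phi_{\sK}$ to $\Phi_{\sK_n}\circ\pi_n^{\ast}$, passing through that same intermediate expression midway.
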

\begin{proof}
	We have the following diagram
	\begin{displaymath}
		\xymatrix{
		& & \E_n \times \E_n \ar[ld]^{\phi_1} \ar[rd]^{\phi_2} & & \\
		& \E_n \times \E_1 \ar[rd]^{\gamma_2} \ar[ld]^{\gamma_1} & & \E_1 \times \E_n \ar[ld]^{\sigma_1} \ar[rd]^{\sigma_2} & \\
		\E_n \ar[d]^{\pi_n} & &\E_1 \times \E_1 \ar[lld]^{\eta_1} \ar[rrd]^{\eta_2} & & \E_n \ar[d]^{\pi_n} \\
		\E_1 & & & & \E_1
		}
	\end{displaymath}
	where every square is Cartesian.  The result is a calculation using the above diagram, base change, and the projection formula.
\begin{align*}
\pi_n^{\ast}\circ \Phi_{\sK^A}(F) &= \pi_n^{\ast}\eta_{2\ast}(\eta_1^{\ast}F\otimes \sK^A)\\
								&\cong \sigma_{2\ast} \sigma_{1}^{\ast}(\eta_1^{\ast}F\otimes \sK^A) \\
								&\cong \sigma_{2\ast} (\sigma_{1}^{\ast} \eta_1^{\ast}F \otimes \sigma_1^{\ast}\sK^A) \\
								&\cong \sigma_{2\ast} (\sigma_{1}^{\ast} \eta_1^{\ast}F \otimes \phi_{2\ast}\sK^A_n) \\
								&\cong \sigma_{2\ast} \phi_{2\ast}(\phi_2^{\ast}\sigma_{1}^{\ast} \eta_1^{\ast}F \otimes \sK^A_n) \\
								&\cong \sigma_{2\ast} \phi_{2\ast}(\phi_1^{\ast}\gamma_2^{\ast} \eta_1^{\ast}F \otimes \sK^A_n) \\
								&\cong \sigma_{2\ast} \phi_{2\ast}(\phi_1^{\ast}\gamma_1^{\ast} \pi_n^{\ast}F \otimes \sK^A_n) \\
								&\cong \Phi_{\sK^A_n}(\pi_n^{\ast}F).
\end{align*}
\end{proof}

With this initial analysis done, we now show $\Phi_{\sK_n^A}$ is strongly compatible with $\sigma_{cl}(n)$.  In order to apply Theorem \ref{stable_preserving}, we must first know $\Phi_{\sK_n^A}$ is an equivalence.  Once this is shown, it is simply a matter of verifying the reduced conditions derived in the previous section.

\begin{proposition}
$\Phi_{\sK_n^A}$ is an equivalence.
\end{proposition}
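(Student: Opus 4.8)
The plan is to prove full faithfulness on a spanning class and then promote this to an equivalence via the Serre functor. Since $\E_n$ is a projective Gorenstein curve, $\Phi_{\sK_n}$ admits both a left and a right adjoint: its kernel is a sheaf, flat over the first factor by Lemma~\ref{flatty}, and the adjoint kernels are obtained by dualizing against the (line-bundle-shift) dualizing complex of $\E_n\times\E_n$. Moreover the skyscraper sheaves $\Omega=\{k(p)\mid p\in\E_n\text{ closed}\}$ form a spanning class of $\bderive{\E_n}$. By the standard spanning-class criterion for full faithfulness (\cite{MR2244106}), it then suffices to show that for all closed points $p,q$ the canonical comparison map
\[
\phom^{i}(k(p),k(q)) \longrightarrow \phom^{i}(\Phi_{\sK_n}(k(p)),\Phi_{\sK_n}(k(q)))
\]
is an isomorphism for every $i\in\mZ$; by Lemma~\ref{flatty} we identify $\Phi_{\sK_n}(k(p))\cong\sK_n|_{p\times\E_n}$.

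For $p\neq q$ both sides vanish: the left side since distinct points have no morphisms, and the right side by Lemma~\ref{monodromy_properties}(2). The diagonal case $p=q$ is the heart of the matter. Here I would use Lemma~\ref{monodromy_properties}(1) together with the étaleness of $\pi_n$: the isomorphism $\pi_{n\ast}(\sK_n|_{p\times\E_n})\cong\sK|_{\pi_n(p)\times\E_1}=\sV_{\pi_n(p)}$ and adjunction for the finite étale map $\pi_n$ reduce the self-$\mr{Ext}$ of $\sK_n|_{p\times\E_n}$ on $\E_n$ to the self-$\mr{Ext}$ of $\sV_{\pi_n(p)}$ on $\E_1$, exactly as in the inclusion displayed in the proof of Lemma~\ref{monodromy_properties} (the orbit terms $\iota_n^{k\ast}$ with $k\neq 0$ contribute nothing). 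Because $\Phi_{\sK}$ is an equivalence of $\bderive{\E_1}$, one has $\rhom{\bullet}(\sV_{\pi_n(p)},\sV_{\pi_n(p)})\cong\rhom{\bullet}(k(\pi_n(p)),k(\pi_n(p)))$, and since $\pi_n$ is a local isomorphism this graded algebra is canonically isomorphic to $\rhom{\bullet}(k(p),k(p))$ on $\E_n$. Checking that this chain of identifications is induced by the comparison map above then yields full faithfulness.

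I expect this diagonal computation to be the main obstacle, for two reasons. First, at the nodal points $\sK_n|_{p\times\E_n}$ is torsion free but not locally free, so one must verify that no spurious self-extensions appear beyond those predicted by $k(p)$; this is controlled by the stability and rigidity of the fibers already exploited in Lemma~\ref{monodromy_properties} and by the pushforward isomorphism, but it requires care to isolate the $k=0$ orbit summand cleanly. Second, one must confirm that the reduction is compatible with the \emph{canonical} map, rather than being merely an abstract isomorphism of graded vector spaces, so that full faithfulness and not just an equality of dimensions is genuinely obtained.

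Finally, to pass from full faithfulness to an equivalence I would invoke the Serre-functor criterion. The $n$-gon is a Gorenstein curve of arithmetic genus $1$ with $\omega_{\E_n}\cong\sO_{\E_n}$, so its Serre functor is the shift $[1]$, and since $\E_n$ is connected, $\bderive{\E_n}$ is indecomposable. Integral transforms commute with Serre functors, and here the dualizing twist is trivial on both sides, so $\Phi_{\sK_n}$ commutes with $[1]$. A fully faithful exact functor with adjoints that commutes with the Serre functor of an indecomposable target is an equivalence (\cite{MR2244106}), which completes the proof.
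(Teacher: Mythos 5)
Your proposal takes a genuinely different route from the paper, but it has two gaps; the first is the acknowledged missing core of your argument, and the second is a step that fails outright. For contrast: the paper never proves full faithfulness on a spanning class. It sets $\sK' = \sK^{\vee}[1]$ (the kernel of $\Phi_{\sK}^{-1}$), lifts it to $\sK'_n$ by Proposition \ref{monodromy_lift}, uses Lemma \ref{raised kernel commutes} to see that $\Phi_{\sK'_n}\circ\Phi_{\sK_n}$ fixes each orbit sum $\oplus_k k(\iota_n^k p)$, corrects by a deck transformation so that every skyscraper is fixed, and then cites \cite[Lemma 2.11]{bk_fm_fib} to identify the composite with tensoring by a line bundle, producing an explicit quasi-inverse.

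The first gap is your diagonal case. The spanning-class criterion requires the \emph{canonical} map $\operatorname{Ext}^i(k(p),k(p)) \to \operatorname{Ext}^i(\Phi_{\sK_n}k(p),\Phi_{\sK_n}k(p))$ to be bijective in \emph{every} degree, and at a node this is an infinite list of conditions, since $\operatorname{Ext}^i(k(p),k(p)) \neq 0$ for all $i \geq 0$ (the minimal resolution over $k[[x,y]]/(xy)$ is periodic); no smooth-variety shortcut in the style of Bondal--Orlov or Bridgeland applies. Your chain of adjunctions only yields an abstract isomorphism of graded vector spaces. It would compute the canonical map if one had a pushforward intertwining $\pi_{n\ast}\circ\Phi_{\sK_n} \cong \Phi_{\sK}\circ\pi_{n\ast}$, but the only intertwining available is the pullback one, $\Phi_{\sK_n}\circ\pi_n^{\ast} \cong \pi_n^{\ast}\circ\Phi_{\sK}$ (Lemma \ref{raised kernel commutes}); taking adjoints of that isomorphism intertwines the \emph{adjoint} of $\Phi_{\sK_n}$ with $\Phi_{\sK}^{-1}$, not $\Phi_{\sK_n}$ itself, and skyscrapers (unlike their orbit sums) are not pullbacks, so the needed compatibility is not a routine check.

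The second gap is fatal as written: $\bderive{\E_n}$ has \emph{no} Serre functor. If $S$ existed, Serre duality would give $\operatorname{Ext}^i(k(p),k(p)) \cong \phom(k(p), S(k(p))[-i])^{\vee}$ for all $i$, and the right side vanishes for $i \gg 0$ because both arguments are bounded complexes --- contradicting the nonvanishing in all degrees at a node. Triviality of $\omega_{\E_n}$ does not make $[1]$ a Serre functor on a singular curve; Serre duality only holds against perfect complexes there. Hence the criterion ``fully faithful $+$ commutes with the Serre functor $+$ indecomposable target $\Rightarrow$ equivalence'' from \cite{MR2244106} is simply unavailable. A correct Gorenstein substitute exists (in the theory of Hernandez Ruiperez--Lopez Martin--Sancho de Salas, full faithfulness plus $\sK_n\otimes\rho_1^{\ast}\omega \cong \sK_n\otimes\rho_2^{\ast}\omega$ suffices, and that condition is trivially satisfied here), and that same theory is also what you would need to justify the existence of both adjoints --- which your spanning-class criterion presupposes --- since $\sK_n$ is not perfect: its fibers at nodes are torsion free but not locally free. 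So your outline could probably be repaired, but both its central computation and its final promotion step are unproven as stated, whereas the paper's construction of an explicit inverse kernel avoids these issues entirely.
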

\begin{proof}

Let $\sK^A  \in \bderive{\E_1 \times \E_1}$ satisfy Proposition \ref{monodromy_lift}.  We have an associated $\hat \sK^A := \hat \sK_{a_m} * \hat \sK_{a_{m-1}}* \ldots * \hat \sK_{a_0}$. 
Reversing the roles of the two projections for $\hat \sK^A$, Proposition \ref{monodromy_lift} produces a (shifted) coherent sheaf $\hat \sK^{A}_n$. As noted in the proof of Lemma \ref{monodromy_properties}, this means that $\hat \sK^A_n$ satisfies $(\pi_n \times \mr{Id})_\ast \hat \sK_n^A \cong (\mr{Id} \times \pi_n)^\ast \hat \sK^A$ and Lemma \ref{raised kernel commutes} applies. 
 Lemma \ref{raised kernel commutes} gives an isomorphism of functors
\begin{displaymath}
\pi_n^\ast \circ \Phi_{\hat \sK} \circ \Phi_{\sK} \cong \Phi_{\hat \sK_n^A}\circ \pi_n^\ast \circ \Phi_{\sK} \cong \Phi_{\hat \sK_n^A} \circ \Phi_{\sK_n^A} \circ \pi_n^\ast.
\end{displaymath}
Applying these isomorphisms to $\oplus_{0 \leq i < n}k(\iota^i_n(p))$ for any $p \in \E_n$, yields the isomorphism $\Phi_{\hat \sK_n^A} \circ \Phi_{\sK_n^A}(\oplus_{0 \leq i < n} k(\iota^i_n p)) \cong \oplus_{0 \leq i < n} k(\iota^i_n p)$.  Correcting by some power of $\iota_n$ we can ensure that $\iota_n^{j\ast} \circ \Phi_{\hat \sK^A_n} \circ \Phi_{\sK_n^A}(k(p)) = k(p)$.  The geometric origin of our autoequivalence, the connectedness of $\E_n$,  and the fact that $\{ \iota^{k}_n (p) \}$ is a discrete subscheme combine to imply this identity holds for all $p$ (here we are using the continuity properties that our kernel affords).

Thus we know $\iota_n^{j\ast} \circ \Phi_{\hat \sK_n^A} \circ \Phi_{\sK_n^A}$ acts as the identity on $k(p)$ for all $p \in \E_n$.  By \cite[Lemma 2.11]{bk_fm_fib} we know $\iota_n^{j\ast} \circ \Phi_{\hat \sK_n^A} \circ \Phi_{\sK_n^A}$ is isomorphic to an autoequivalence obtained by tensoring by an invertible sheaf,  thus the result.
\end{proof}

\begin{proposition}
\label{they_are_strongly_compatible}
$\Phi_{\sK_n^A}$ is strongly compatible with $\sigma_{cl}(n)$ on $\E_n$.
\end{proposition}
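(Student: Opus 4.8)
The plan is to verify the three reduced hypotheses of Theorem~\ref{stable_preserving} for the heart $\s{A} = \mr{Coh}(\E_n)$ and then promote compatibility to strong compatibility via the corollary characterizing it. Two standing facts make the reductions of \S\ref{reductions} available: $\coimage(Z_{cl})$ has rank $2$ and $\image(Z_{cl}) \otimes \mR = \mC$, both immediate from $Z_{cl}(F) = -\chi(F) + i\rktot(F)$.

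Condition~(i) is the quickest. By Lemma~\ref{flatty}, $\sK_n$ is a sheaf, flat over the first factor, so for $F \in \mr{Coh}(\E_n)$ the object $\rho_1^{\ast}F \otimes^{L} \sK_n$ is an honest sheaf; since the second projection $\rho_2 \colon \E_n \times \E_n \to \E_n$ has one-dimensional fibres, $R\rho_{2\ast}$ has cohomological amplitude $[0,1]$. Hence $\Phi_{\sK_n}(\mr{Coh}(\E_n)) \subset \s{D}^{\leq 1} \cap \s{D}^{\geq 0}$, which is (i) with $M = 1$.

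For condition~(ii) I would establish the commutation relation $\Phi_{\sK_n} \circ \iota_n^{\ast} \cong \iota_n^{q\ast} \circ \Phi_{\sK_n}$ for some $0 < q \leq n$ and then apply Lemma~\ref{commute_with_iota}, which yields that $\Phi_{\sK_n}$ preserves $\ker Z_{cl}$; as $\ker Z_{cl}$ is finite rank, (ii) follows. By Proposition~\ref{geometric_automorphism_relation}(3),(4) the two composites are integral transforms with kernels $(\iota_n \times id)_{\ast}\sK_n$ and $(id \times \iota_n^{q})^{\ast}\sK_n$. Applying $(\pi_n \times id)_{\ast}$, using $\pi_n \circ \iota_n = \pi_n$, flat base change across the product, and the defining isomorphism $(\pi_n \times id)_{\ast}\sK_n \cong (id \times \pi_n)^{\ast}\sK$, both kernels push forward to $(id \times \pi_n)^{\ast}\sK$. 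By the uniqueness clause of Proposition~\ref{monodromy_lift} they therefore differ by a covering transformation of $(\pi_n \times id)$, and the remaining task is to choose $q$ so that this covering transformation is trivial. This is the step I expect to be the main obstacle: one must match the cyclic, order-$n$ monodromy on the two factors, and it is exactly here that the hypothesis $[\Phi_{\sK}] \in \Gamma_0(n)$ is used, since the relevant reduction modulo $n$ is then a unit, making $q \mapsto (\text{induced first-factor deck shift})$ a bijection of $\mZ/n\mZ$ and guaranteeing a solution.

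Condition~(iii) and the upgrade to strong compatibility both come from one $K$-group computation. Lemma~\ref{raised kernel commutes} gives $[\Phi_{\sK_n}] \circ [\pi_n^{\ast}] = [\pi_n^{\ast}] \circ [\Phi_{\sK}]$. Since $\pi_n$ is étale of degree $n$ and $\chi(\sO_{\E_1}) = 0$, a short Riemann--Roch check gives $Z_{cl}(\pi_n^{\ast}F) = n\,Z_{cl}(F)$, so $[\pi_n^{\ast}]$ is multiplication by $n$ on $\coimage(Z_{cl})$. As this scalar is central, the relation forces $n[\Phi_{\sK_n}] = n[\Phi_{\sK}]$, hence $[\Phi_{\sK_n}] = [\Phi_{\sK}] \in \Gamma_0(n) \subset \mr{SL}(2, \mZ)$. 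Lemma~\ref{must be integer} then yields (iii), so Theorem~\ref{stable_preserving} gives compatibility. Finally, because $[\Phi_{\sK}]$ lies in $\mr{SL}(2, \mZ)$ it has positive determinant and, using $\image(Z_{cl}) \otimes \mR = \mC$, extends to an orientation-preserving $\mR$-linear automorphism of $\mC$; the corollary to Theorem~\ref{stable_preserving} characterizing strong compatibility then upgrades the conclusion to strong compatibility, as desired.
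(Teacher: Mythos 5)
Your proof has the same overall skeleton as the paper's: verify the three reduced conditions and then upgrade via the corollary on orientation-preserving extensions. Condition (i) is argued exactly as in the paper (Lemma~\ref{flatty} plus one-dimensionality of the fibres of $\rho_2$). Condition (iii) is where you genuinely diverge, and your route is correct and arguably cleaner: the paper computes the matrix of $[\Phi_{\sK_n}]$ entry by entry, using Lemma~\ref{monodromy_properties} for the column $[k(p)]$ and Lemma~\ref{raised kernel commutes} applied to $\sO_{\E_n} = \pi_n^{\ast}\sO_{\E_1}$ (together with $\rktot(\pi_n^{\ast}F) = n\rktot(F)$ and $\chi(\pi_n^{\ast}F) = n\chi(F)$) for the other column; you instead push the intertwining relation $[\Phi_{\sK_n}]\circ[\pi_n^{\ast}] = [\pi_n^{\ast}]\circ[\Phi_{\sK}]$ down to coimages, where $[\pi_n^{\ast}]$ is multiplication by $n$, and cancel $n$ using torsion-freeness of $\image(Z_{cl})\subset\mC$ to get $[\Phi_{\sK_n}] = [\Phi_{\sK}]$ in one stroke. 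This is valid (and recovers the paper's conclusion that the two matrices coincide), provided (ii) is settled first so that $[\Phi_{\sK_n}]$ actually descends, which is the order you follow.

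The gap is in (ii), at exactly the step you flag. From the uniqueness clause of Proposition~\ref{monodromy_lift} you correctly conclude that $(\iota_n\times id)_{\ast}\sK_n$ and $(id\times\iota_n^{q})^{\ast}\sK_n$ differ by a covering transformation of $(\pi_n\times id)$, so that $q\mapsto f(q)$ is an affine map $f(q) = f(0) + qc$ of $\mZ/n\mZ$, where $c$ records the first-factor shift induced by a single second-factor deck transformation. Solvability of $f(q)=0$ requires $c$ to be a unit mod $n$, and your justification --- that $[\Phi_{\sK}]\in\Gamma_0(n)$ makes ``the relevant reduction mod $n$ a unit'' --- neither identifies $c$ nor explains why it equals that unit; as written it is an assertion, and it attributes the coprimality to the wrong source. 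In the paper this step is read off the construction of $\sK_n$: the fibres of $\sK_n$ over the $n$ preimages of $p$ are the $n$ pairwise non-isomorphic twists $\iota_n^{k\ast}\sL_p$, each occurring exactly once, so a first-factor deck transformation must shift the twist index by a generator of the cyclic monodromy. The coprimality is ultimately forced by indecomposability (stability) of the fibre $\sK|_{p\times\E_1}\cong\pi_{n\ast}\sL_p$: if the shift shared a factor with $n$, the monodromy orbits would split $\pi_{n\ast}\sL_p$ into proper direct summands. (The role of $\Gamma_0(n)$ is upstream, in Proposition~\ref{monodromy_lift} itself: it makes the rank of the fibres divisible by $n$ and $\chi\neq 0$, so the construction applies at all.) To close your argument you must descend into that construction, or into stability of the fibres of $\sK$; the matrix condition alone does not do it.
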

\begin{proof}
According to Theorem \ref{stable_preserving} and the reductions carried out in \S\ref{reductions}, we need to verify that
\begin{enumerate}
\item $\coh{i}(\Phi_{\sK_n^A}(F)) = 0$ for $i \neq M, M+1$, $M \in \mZ$ and $F \in \mr{Coh}(\E_n)$
\item $\Phi_{\sK_n^A}\circ \iota_{n}^{\ast}\cong \iota_{n}^{q\ast} \circ \Phi_{\sK_n^A}$, for some $0 \leq q < n$.
\item $[\Phi_{\sK_n^A}]$ satisfies Theorem \ref{stable_preserving}(iii).
\end{enumerate}

(1) Without loss of generality, we can assume that $M = 0$.  Let $\rho_i$ designate the $i^{th}$ projection $\E_n \times \E_n \xrightarrow{\rho_i} \E_n$. By Lemma \ref{flatty}, $\sK_n^A$ is flat over $\rho_1$.  Therefore $\rho_1^{\ast}F \otimes^L \sK_n^A \cong \rho_1^{\ast}F \otimes \sK_n^A$ and the assertion reduces to showing $R\rho^{j}_{2\ast}(G) = 0$ for $G \in \mr{Coh}(\E_n \times \E_n)$, $j > 1$.  This is true since $\rho_2$ is a fibration with fiber dimension 1.

(2) Proposition \ref{geometric_automorphism_relation} shows the left side is isomorphic to $\Phi_{(\iota_n \times \mr{Id})_{\ast}\s{K}^A_n}$ and the right is isomorphic to $\Phi_{(\mr{Id} \times \iota_n^{q})^{\ast} \s{K}^A_n} \cong \Phi_{(\mr{Id} \times \iota_n)^{q\ast} \s{K}^A_n} $.  By construction $(\iota^a_n \times \mr{Id})_{\ast}\sK_n^A \cong (\mr{Id} \times \iota_n)^{-1}_\ast \sK_n^A$.  Using $(\mr{Id} \times \iota_n^{-1})_\ast \cong (\mr{Id} \times \iota_n)^\ast$ and taking the previous isomorphism to the $d$th power (where $da = 1 \mod n)$ gives the result.

(3)  From (2) we know $[\Phi_{\sK_n^A}]$ descends and $[[\Phi_{\sK_n^A}]] \in \mr{Aut}(\coimage(Z_{cl}))$.  Temporarily, we denote $Z_{cl}$ on $K(\E_n)$ as $Z_{cl}^n$.  The factorization $Z_{cl}^n = Z_{cl}^1  \circ [\pi_{n\ast}] $ and surjectivity of $[\pi_{n\ast}]$ implies $\coimage(Z^n_{cl}) \cong \coimage(Z^1_{cl})$.  We claim $[[\Phi_{\sK_n^A}]] = [[\Phi_{\sK^A}]]$ under this equivalence.  

Assuming this, explicit calculation using the standard basis introduced in \S\ref{n-gons_chapter} shows $\coimage(Z_{cl}^n)_{eff} = \coimage(Z_{cl}^1)_{eff}$.  By definition, $\coimage(Z_{cl}^n)_{eff \cap comp}$ only depends on $[[\Phi_{\sK_n^A}]]$ and thus the isomorphism $[[\Phi_{\sK_n^A}]] \cong [[\Phi_{\sK^A}]]$ implies
\begin{displaymath}
\coimage(Z_{cl}^n)_{eff\cap comp} = \coimage(Z_{cl}^n)_{eff \cap comp}
\end{displaymath}
Since $[[\Phi_{\sK^A}]]$ satisfies Theorem \ref{stable_preserving}(iii), the same will be true of $[[\Phi_{\sK_n^A}]]$.

To prove this isomorphism, we show $[[\Phi_{\sK_n^A}]]$ and $[[\Phi_{\sK^A}]]$ represent the same matrix under the choice of basis $\{[k(p)], [\sO_{\mP}(-1)]\}$ for $K(\E_1)$. Since $\coimage Z_{cl}^1 \cong K(\E_1)$, this choice will serve as a basis for $\coimage Z_{cl}^n$ and $\coimage Z_{cl}^1$ as well.  Explicit calculation shows  
 \begin{displaymath}
 [[\Phi_{\sK^A}]] \cong \left[ \begin{array}{cc} d = \chi(\sK|{p\times \E_1}) & a \\ r = \rktot(\sK|_{p \times \E_1}) & b \end{array} \right]
 \end{displaymath}
with $db - ra = 1$. It suffices to calculate $[[\Phi_{\sK_n^A}(k(q))]]$ and $\frac{[[\Phi_{\sK^A_{n}}(\sO_{\E_n})]]}{n}$ for any $p^\prime \in \E_n$ with $\pi_n(p^\prime) = p$.  This is due to the equalities $[[k(p^\prime)]] = [[k(p)]]$ and $[[\sO_{\E_n}]] = \frac{[[\sO_{\mP}(-1)]]}{n}$ under the isomorphism between coimages. 

The statement for $[[\Phi_{\sK_n^A}(k(q))]]$ is implied by $\pi_{n\ast} (\Phi_{\sK_n^A}(k(q))) \cong \Phi_{\sK}(k(p))$, which follows from the construction of $\sK_n^A$.  For the other element, by Lemma \ref{raised kernel commutes} $\Phi_{\sK_n^A}(\sO_{\E_n}) \cong \pi_n^{\ast}\circ \Phi_{\sK^A}(\sO_{\E_1})$.  Thus, $[[\Phi_{\sK_n^A}(\sO_{\E_n})]] \cong [[\pi_{n}^\ast \circ \Phi_{\sK^A}(\sO_{\E_1})]]$.  Simple calculations show
\begin{displaymath}
 [\pi_{n\ast} \circ \pi_n^\ast] \cong \left[ \begin{array}{cc} n  & 0 \\ 0 & n \end{array} \right]
\end{displaymath}
Thus, 
\begin{displaymath}
  [[\Phi_{\sK_n^A}(\sO_{\E_n})]] = \left[ \begin{array}{cc} n & 0 \\ 0 & n\end{array} \right] \left[ \begin{array}{cc} d & a \\ r & b\end{array} \right] \left[ \begin{array}{c} 0  \\ 1\end{array} \right] = \left[ \begin{array}{c} n a   \\ n b\end{array} \right]
\end{displaymath}
This yields $\frac{[[\Phi_{\sK^A_{n}}(\sO_{\E_n})]]}{n} = [[\Phi_{\sK^A}(\sO_{\E_1})]]$.  Combining these statements gives 
\begin{displaymath}
[[\Phi_{\sK_n^A}]] = \left[ \begin{array}{cc} d & a \\ r & b\end{array} \right]
\end{displaymath}
and the equivalence is shown. 

\end{proof}

\subsubsection{The group of compatible autoequivalences}
We now have a large class of compatible autoequivalences.  We are interested in the subgroup of $\Aut(\bderive{\E_n})$ containing all autoequivalences compatible with $\sigma_{cl}(n)$.  We will first define $\Aut_{cl}(n)$ and give some of its important properties.  In the next section we show that it is maximal.  

\begin{definition} Let $\Aut^{triv}(n)$ be the subgroup of $\Aut(\bderive{\E_n})$ generated by
\begin{enumerate}
\item $\otimes \pi_n^\ast\sL, \sL \in Pic^0(\E_1)$,
\item $[2] := [1] \circ [1]$, where $[1]$ is the shift functor,
\item $\Aut(\E_n)$.
\end{enumerate}
\end{definition}

\begin{definition}
\label{max_compat_subgroup}
Let $\Aut_{cl}(n)$ be the subgroup of $\Aut(\bderive{\E_n})$ generated by
\begin{displaymath}
\{\Phi \; | \Phi \cong \Phi_{\sK^A_n} \textrm{,  } A \in \Gamma_{0}(n)\}
\end{displaymath}
 and $\Aut^{triv}(n)$.
\end{definition}

\begin{lemma}
$\Aut_{cl}(n)$ is a proper subgroup of $\Aut(\bderive{\E_n})$.
\end{lemma}
\begin{proof}
For any closed smooth point $p \in \E_n$, the autoequivalence $\otimes \s{I}(p)$ will not preserve semistable objects. 
\end{proof}

 \begin{theorem}
 \label{Gamma_0}
 	All autoequivalences of $\Aut_{cl}(n)$ are strongly compatible with $(Z_{cl}, \s{P}_{cl})$.  Further, $\Aut_{cl}(n)$ is an extension
 	 \begin{displaymath}
		\xymatrix{ 1 \ar[r] & ((\mC^\ast)^n \rtimes D_{2n}) \times \mZ \times \mC^\ast \ar[r] &  \Aut_{cl}(n) \ar[r] & \Gamma_0(n) \ar[r] & 1.}
	\end{displaymath}
Under the action of $\Aut_{cl}(n)$, there are $\Sigma_{d|n, d > 0} \phi(\gcd(d, \frac{n}{d}))$ equivalence classes of phases, where $\phi$ is Euler's function.  
 \end{theorem}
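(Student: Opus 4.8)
The plan is to establish the three assertions in turn: strong compatibility, the group extension, and the cusp count. Strong compatibility is the easiest. By Proposition \ref{properties} compatible autoequivalences are closed under composition, and since the two commuting actions of Lemma \ref{groupactions} make strong compatibility multiplicative (if $\Phi_i \cdot (Z,\s{P}) = (Z,\s{P})\cdot g_i$ then $\Phi_1\Phi_2 \cdot (Z,\s{P}) = (Z,\s{P})\cdot g_1 g_2$), it suffices to check each generator of $\Aut_{cl}(\E_n)$. The lifted kernels $\Phi_{\sK_n}$ are handled by Proposition \ref{they_are_strongly_compatible}. For the generators of $\Aut^{triv}_{cl}(\E_n)$ one checks directly that $\otimes\s{L}(0,\dots,0;\lambda)$ and $\Aut(\E_n)$ act trivially on $\coimage(Z_{cl})$ (a multidegree-zero twist leaves $\chi$ and $\rktot$ unchanged, and a geometric automorphism only permutes components, which is invisible to $\rktot$ and $\chi$), while $[2]$ acts as the identity; in each case $g$ is the identity or a pure phase shift, so each generator is strongly compatible.

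For the extension, I would build the homomorphism $p\colon \Aut_{cl}(\E_n) \to \mr{SL}(2,\mZ)$ sending $\Phi$ to the automorphism it induces on $\coimage(Z_{cl})\cong\mZ^2$; this lands in $\mr{SL}(2,\mZ)$ by Lemma \ref{must be integer} together with orientation preservation. Its image is contained in $\Gamma_0(n)$ because the matrix of $[\Phi_{\sK_n}]$ computed in Proposition \ref{they_are_strongly_compatible} has lower-left entry $r=\rktot(\Phi_{\sK}(k(p)))$, which is divisible by $n$ exactly when the rank $r/n$ of $\Phi_{\sK_n}(k(p))$ is an integer, i.e. exactly the condition $[\Phi_{\sK}]\in\Gamma_0(n)$ required for the lift, while the generators of $\Aut^{triv}_{cl}(\E_n)$ map to the identity. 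Surjectivity onto $\Gamma_0(n)$ follows from the surjectivity of $\Aut(\bderive{\E_1})\to\mr{SL}(2,\mZ)$ (\cite{bk_stab, bk_fm_nodal}): given $\gamma\in\Gamma_0(n)$ pick $\sK$ on $\E_1$ with $[\Phi_{\sK}]=\gamma$, lift it by Proposition \ref{monodromy_lift}, and observe $p(\Phi_{\sK_n})=\gamma$.

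The real content is the identification of $\ker p$ with $\Aut^{triv}_{cl}(\E_n)$, and this is the step I expect to be the main obstacle. One inclusion is the triviality computation above. For the reverse inclusion, take $\Phi\in\ker p$; since every generator of $\Aut_{cl}(\E_n)$ is a Fourier--Mukai transform, so is $\Phi$. Composing with a suitable power of $[2]$, strong compatibility with $g=\mr{id}$ lets me assume $\Phi$ fixes $(Z_{cl},\s{P}_{cl})$, hence preserves $\mr{Coh}(\E_n)$ and every slice. In particular $\Phi$ sends each length-one skyscraper $k(p)$ — a phase-$1$ stable object whose class in $\coimage(Z_{cl})$ is fixed by $\Phi$ — to another such object, i.e. to $k(\sigma(p))$ for a scheme automorphism $\sigma\in\Aut(\E_n)$ (here using that $\Phi$ is a genuine integral transform). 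After composing with $\sigma_\ast^{-1}\in\Aut^{triv}_{cl}(\E_n)$ the functor fixes all $k(p)$, so by \cite[Lemma 2.11]{bk_fm_fib} it is $\otimes\s{L}$ for a line bundle $\s{L}$; triviality on $\coimage(Z_{cl})$ forces $\s{L}$ to have multidegree zero, i.e. $\s{L}=\s{L}(0,\dots,0;\lambda)$, a generator of $\Aut^{triv}_{cl}(\E_n)$. Unwinding the compositions gives $\Phi\in\Aut^{triv}_{cl}(\E_n)$, completing the extension.

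Finally, for the count of phase classes, I would identify the nontrivial phases (those $a$ with $\s{P}_{cl}(a)\neq 0$, taken modulo the shift) with the primitive rays of $\coimage(Z_{cl})\cong\mZ^2$, via the class $Z_{cl}([F])\in\mathbb{H}'$ of a semistable object $F$; each direction is realized by a semistable sheaf through the classification of Theorem \ref{vector_bundles} (with torsion sheaves accounting for phase $1$). Two phases are $\Aut_{cl}(\E_n)$-equivalent exactly when the corresponding rays lie in a single orbit of the image $\Gamma_0(n)$ of $p$ acting on $\coimage(Z_{cl})$, since $\Aut^{triv}_{cl}(\E_n)=\ker p$ acts only by integer phase shifts. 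Because $-\mr{id}\in\Gamma_0(n)$, the opposite rays $v$ and $-v$ are identified, so these orbits are precisely the $\Gamma_0(n)$-orbits on $\mathbb{P}^{1}(\mathbb{Q})$, that is, the cusps of $\Gamma_0(n)$; their number is the classical $\sum_{d\mid n,\, d>0}\phi(\gcd(d,n/d))$, which is the asserted formula.
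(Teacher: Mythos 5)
Your proposal is correct, and its overall architecture matches the paper's proof: check strong compatibility on generators (via Proposition \ref{they_are_strongly_compatible} for the lifted kernels and Theorem \ref{stable_preserving} for the $K$-trivial generators), construct the homomorphism $p$ to $\mr{SL}(2,\mZ)$ landing in $\Gamma_0(n)$, identify $\ker p$ with $\Aut^{triv}_{cl}(\E_n)$, and reduce the phase count to the classical cusp count for $\Gamma_0(n)$. The one genuinely different step is the kernel identification, which you correctly flag as the real content. The paper, after normalizing by an even shift so that the autoequivalence preserves $\mr{Coh}(\E_n)$, invokes Gabriel's theorem \cite{GabrielAbeliancategories} to decompose any autoequivalence of the abelian category $\mr{Coh}(\E_n)$ as $(\otimes\sL)\circ\alpha_{\ast}$, and then kills the multidegree of $\sL$ using triviality on $\coimage(Z_{cl})$. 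You instead exploit that every element of $\Aut_{cl}(\E_n)$ is an integral transform: skyscrapers go to skyscrapers (being the stable phase-$1$ objects of charge $-1$ fixed in $\coimage(Z_{cl})$), the kernel is then supported on the graph of an automorphism $\sigma$, and after composing with $\sigma_{\ast}^{-1}$ you apply \cite[Lemma 2.11]{bk_fm_fib} to conclude the functor is a line-bundle twist, with multidegree zero forced as before. Both routes are sound; yours leans on the Fourier--Mukai machinery already developed in the paper and has the merit of making explicit both the containment of the image of $p$ in $\Gamma_0(n)$ and the surjectivity onto $\Gamma_0(n)$, which the paper leaves implicit in Definition \ref{max_compat_subgroup} and Proposition \ref{they_are_strongly_compatible}(3). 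The paper's Gabriel argument, on the other hand, applies to an \emph{arbitrary} autoequivalence preserving $\mr{Coh}(\E_n)$, with no Fourier--Mukai representability assumed; that extra generality is exactly what gets reused in the subsequent corollary that $\Aut_{cl}(\E_n)$ is maximal, where your FM-based argument would not apply directly without first knowing that every autoequivalence of $\bderive{\E_n}$ is an integral transform.
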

\begin{proof}

It is easily calculated that all elements of $\Aut^{triv}(n)$ act trivially on $K(\E_n)$ and $\Aut^{triv}(n) \cong \Aut(\E_n) \times \mZ \times \mr{Pic}^0(\E_n) \cong ((\mC^\ast)^n \rtimes D_{2n}) \times \mZ \times \mC^\ast$.  We claim that the above generators of $\Aut_{cl}(n)$ are strongly compatible.  From Definition \ref{max_compat_subgroup} and Proposition \ref{they_are_strongly_compatible}, we are reduced to verifying this for elements in $\Aut^{triv}(n)$.  However, this is an easy application of Theorem \ref{stable_preserving}: any autoequivalence acting trivially on $K(\bderive{E_n})$ will clearly satisfy conditions (ii) and (iii).  Our specific choice of autoequivalences shows they satisfy condition (i).

Since every element in $\Aut_{cl}(n)$ satisfies Theorem \ref{stable_preserving}(ii), $Aut_{cl}(n)$ acts on $\image(Z)\cong \mZ^{2}$.  Let $m: \Aut_{cl}(n) \rightarrow \mr{GL}(2, \mZ)$ be the resulting homomorphism.  For $\E_1$, as a result of \cite{bk_fm_nodal}, any autoequivalence is of the form $\Phi_{\sK}$ for some $\sK \in \bderive{\E_1 \times \E_1}$.  By Proposition \ref{monodromy_lift} and Proposition \ref{they_are_strongly_compatible}, if $\Phi \in \Aut(\bderive{\E_1})$ and $[\Phi] \in \Gamma_0(n)$, then there exists an autoequivalence $\Phi_n$ strongly compatible with $\sigma_{cl}(n)$ lifting $\Phi$.  This shows $m(\Aut_{cl}(n)) \cong \Gamma_0(n)$.   

Let $H$ be the kernel of $m$.  To obtain the exact sequence above, we show $\Aut^{triv}(n) \cong H$.   It is clear $\Aut^{triv}(n) \subset H$.  To show it is surjective,  we write any autoequivalence $\Phi \in H$ as a composition of generators of $\Aut^{triv}(n)$.   First, if $a := \phi(\Phi(k(p))$, then $a$ is an odd integer.  For if it was not, $[[\Phi]]$ would act non-trivially on $\mZ^2$.  Let $\Psi_0 := [-a + 1] \circ \Phi$.  Then $\Phi(\sP_{cl}(1)) \subset \sP_{cl}(1)$.   As noted in Section \ref{compatible_chapter}, we then have $\Psi_0(\mr{Coh}(\E_n)) \cong  \mr{Coh}(\E_n)$ as subcategories of $\bderive{\E_n}$. Gabriel's theorem \cite{GabrielAbeliancategories} guarantees that $\Psi_0$ is generated by automorphisms of $X$ and tensoring $\mr{Coh}(\E_n)$ by invertible sheaves.  Since the latter elements are a normal subgroup of $\mr{Aut}(\mr{Coh}(\E_n))$, we can write this as a composition $(\otimes\sL) \circ \alpha_{\ast}$.  It is clear if $\sL  \notin \Aut^{triv}_{cl}(n)$, then $\otimes \sL$ will not preserve $\ker Z_{cl}$.  We have now written $\Phi$ as the composition of elements in $\Aut^{triv}(n)$.

The last statement of the theorem follows from well known computations for the number of cusps under the action of $\Gamma_0(n)$ on the upper half plane \cite{MR1291394}.  More precisely, we use the shift [2] to reduce the computation to that of equivalence classes of slopes under the action of $\Gamma_0(n)$.  In fact, a more precise statement can be made: as (redundant) representatives for the equivalence classes of phases under the action of $\Gamma_0(n)$, we can choose phases $\phi_{\frac{c}{d}}$ such that $\frac{c}{d} = -cotan(\pi\phi_{\frac{c}{d}})$ where $d|n$, $c < \frac{n}{d}$, $c,d  \in \mZ$, and $c$ coprime to $d$.  Further, the action of $\Gamma_0(n)$ on this set doesn't change the denominator of the representative.
			
\end{proof}


\section{The moduli of stable sheaves on $\E_n$}
\label{applications_chapter}
A sheaf $\sF$  is semistable (stable) in $\sigma_{cl}(n)$  if and only if $\sF$ is Simpson semistable (stable) with polarization $\s{L}(1, \ldots, 1; 1)$.  The results of \cite{Simpson_moduli}, shows the existence of a coarse moduli space  $\s{M}_{cl}(n,a)$ of $\mr{Obj}(\sP_{cl}(a) )$.   The closed points correspond to equivalence classes of objects with equivalent Jordan factors (S-equivalence).  This moduli is a projective scheme over $\mC$.  As noted in the introduction, we restrict to the case $n > 1$.

\begin{theorem}
\label{compactification}
Given a nontrivial slice $\s{P}(a)$ of $\sigma_{cl}(n)$ with $n > 1$, let $\s{M}^{st}_{cl}(n,a) \subset \s{M}_{cl}(n,a)$ denote the subscheme whose closed points correspond to stable objects and $\overline{\s{M}^{st}_{cl}(a)}$ its closure.  Then $\overline{\s{M}^{st}_{cl}(a)} \cong \E_s \coprod \mZ/n\mZ$ where $s|n$ and each component of $\overline{\s{M}^{st}_{cl}(a)}$ is a component of $\s{M}_{cl}(n,a)$.
\end{theorem}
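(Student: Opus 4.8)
The plan is to first collapse the problem to a single representative phase using the group action of Theorem~\ref{Gamma_0}, then analyze that phase directly. By Theorem~\ref{Gamma_0} every phase is $\Aut_{cl}(\E_n)$-equivalent to a representative $\psi_{c/d}$ whose charge has slope of the form $-c/d$ with $d\mid n$; write $s$ for the resulting denominator (so $s\mid n$, depending on $a$). A strongly compatible $\Phi\in\Aut_{cl}(\E_n)$ carrying $\s{P}(a)$ to $\s{P}(\psi_{c/d})$ is, by Proposition~\ref{order_preserve_prop}, an exact equivalence of the abelian slices; by Proposition~\ref{properties} it preserves stability, semistability, and (being exact) Jordan--H\"older factors, hence S-equivalence. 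It therefore induces an isomorphism of the coarse moduli spaces of \cite{Simpson_moduli} carrying stable locus to stable locus and commuting with closures. Thus I may assume from the outset that $a$ is this representative, which in particular has $\rktot\neq 0$.

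Next I classify the stable objects of the representative phase. Since $a$ is nontrivial with finite nonzero slope, a stable object $F$ has $\rktot(F)\neq 0$ and admits no torsion subsheaf (such a subsheaf would have strictly larger phase and destabilize $F$), so $F$ is torsion-free. I split these into locally free sheaves and indecomposable torsion-free but not locally free sheaves. In the locally free case, Theorem~\ref{vector_bundles} together with Example~\ref{sph_vb} forces $m=1$, so every stable bundle of this phase is a pushforward $\pi_{r\ast}\s{L}$ of a line bundle along an \'etale cover. In the remaining case I identify the stable sheaves with the rigid objects $i_{s,j\ast}\s{L}$ supported on the connected subchain of $s$ consecutive components beginning at the $j$-th component, indexed by $j\in\mZ/n\mZ$.

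I then compute the two pieces separately. For the locally free part, the Fourier--Mukai-type autoequivalences $\Phi_{\sK_n}$ constructed in \S5 (realizing the $\Gamma_0(n)$-action) reduce the stable bundles $\pi_{r\ast}\s{L}$ to a ``Jacobian'' of line bundles of fixed multidegree on the genus-$1$ curve $\E_s$; using the self-duality of $\E_s$ (for which the relevant $\mr{Pic}$ is isomorphic to $\E_s$ itself, exactly as in the $n=1$ case of \cite{bk_stab}) this moduli is isomorphic to $\E_s$. I must also verify that the closure of this stable locus inside $\s{M}_{cl}(n,a)$ is precisely $\E_s$: the Jordan--H\"older degenerations of properly semistable locally free sheaves of this phase remain within the same family, so no spurious boundary points are introduced and this component does not meet the imperfect sheaves. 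For the torsion-free-but-not-locally-free part, I show each $i_{s,j\ast}\s{L}$ is rigid---the line-bundle parameter is absorbed by isomorphism and the support chain is pinned by the nodes, so the relevant $\mr{Ext}^1$ vanishes---hence these are $n$ isolated reduced points forming $\mZ/n\mZ$.

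Finally I assemble the pieces. Because the $n$ sheaves $i_{s,j\ast}\s{L}$ are rigid, they cannot lie in the closure of the positive-dimensional family, so $\overline{\s{M}^{st}_{cl}(n,a)}\cong\E_s\coprod\mZ/n\mZ$, and each summand is a connected component of $\s{M}_{cl}(n,a)$. The main obstacle is the content of the third paragraph: pinning down the positive-dimensional component as $\E_s$ with its reducible nodal structure, and above all establishing the rigidity of the imperfect sheaves together with the fact that the stable locally free locus never degenerates into them. This node-crossing and self-duality analysis is where the genus-$1$ geometry does the real work and is the crux that forces the disjointness of the two parts.
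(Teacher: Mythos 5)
Your skeleton (reduce to a representative phase via Theorem \ref{Gamma_0}, split the stable objects into locally free and imperfect ones, identify the two pieces, prove disjointness) matches the paper's, and your reduction step is fine. But there is a genuine gap at the heart of the argument: the identification of the positive-dimensional component with $\E_s$. Your plan is to use the autoequivalences $\Phi_{\sK_n}$ of $\bderive{\E_n}$ to ``reduce the stable bundles to a Jacobian'' and then invoke self-duality of $\E_s$. This cannot work as stated: the $\Gamma_0(n)$-action realized by $\Aut_{cl}(\E_n)$ preserves the denominator $s$ of the phase (as noted at the end of the proof of Theorem \ref{Gamma_0}), so when $s<n$ no strongly compatible autoequivalence of $\bderive{\E_n}$ can move $\s{P}(a)$ to the skyscraper slice, and no Jacobian-type description is available inside $\bderive{\E_n}$ itself. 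What the paper does instead --- and what is missing from your proposal --- is to leave $\E_n$ altogether: it lifts a kernel $\sK$ on $\E_1$ with $[\Phi_{\sK}]\in\Gamma_0(s)$ to a kernel $\sK_s$ on $\E_s\times\E_s$ and forms $\s{U}=(id\times\pi_{n,s})^{\ast}\sK_s$, obtaining an integral transform $\Phi_{\s{U}}\colon\bderive{\E_s}\rightarrow\bderive{\E_n}$ that sends skyscrapers at smooth points of $\E_s$ to stable line bundles of phase $a$. Surjectivity of this parametrization is then the crux: one must show that every stable vector bundle $\sV$ of phase $a$ is invariant under $\iota_n^{n/s}$ and hence descends to $\E_s$. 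The paper proves this by showing that otherwise $\pi_{n,s\ast}\sV$ would be a spherical stable vector bundle on $\E_s$ of rank $>1$, contradicting the classification --- via $\Phi_{\sK_s}$ --- of spherical bundles of that phase on $\E_s$ as line bundles. Nothing in your proposal plays the role of this Galois-invariance/sphericality step, and Theorem \ref{vector_bundles} together with Example \ref{sph_vb} (every stable bundle is a pushforward of a line bundle from some cover) is far from sufficient: it tells you neither which covers and line bundles occur in phase $a$, nor when two pushforwards are isomorphic, nor what the scheme structure of the family is.

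There are secondary gaps as well. You assert, rather than prove, that the imperfect stable sheaves are exactly the $n$ sheaves $i_{s,j\ast}\sL$; the paper derives this by showing (via Jordan factors and adjunction) that $\pi_{n,s\ast}$ of a stable imperfect sheaf on $\E_n$ is again stable, and then quoting the known classification on $\E_s$. For disjointness, your rigidity route ($\mr{Ext}^1$-vanishing, hence isolated points) is a legitimate alternative in outline --- the Ext-vanishing does hold, and at a stable point deformation theory controls the local structure of the coarse moduli space --- but you neither compute the $\mr{Ext}^1$ nor supply the link from rigidity of the sheaf to isolation of its S-equivalence class. The paper's argument is more economical and worth internalizing: the continuous map $\pi_{n,s\ast}\colon\s{M}_{cl}(n,a)\rightarrow\s{M}_{cl}(s,a)$ sends stable vector bundles (pullbacks from $\E_s$) to properly semistable classes, while it sends the rigid sheaves to stable classes; since the stable locus is open, the closure of the vector-bundle component cannot meet the rigid points.
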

\begin{proof}
Through the action of $\Aut_{cl}(n)$, we can assume that $a$ is one of the specific (redundant) representatives given in the proof of Theorem \ref{Gamma_0}: $a = \phi_{\frac{r}{s}}$ where $s | n$, $r < \frac{n}{s}$ and $r, s$ are coprime.  All other representatives of this form have the same denominator in the index.  Since we are primarily concerned with the denominator our choice of representative will not affect the calculation.    The coprimality of $r,s$ ensures the existence of $A \in \mr{SL}(2, \mZ)$ such that in our preferred basis
\begin{displaymath}
A = \left[\begin{array}{cc} r & * \\ s & *\end{array} \right].
\end{displaymath}
\noindent Clearly $A \in \Gamma_0(s)$. 

Proposition \ref{monodromy_lift} and Proposition \ref{they_are_strongly_compatible} construct $\Phi_{\sK_s} \in \Aut(\bderive{\E_s})$ such that for smooth $p$, $\Phi_{\sK_s}(k(p)) \in \mr{Pic}(\E_s)$.  We are interested in the functor $\pi_{n,s}^\ast \circ \Phi_{\sK_s}: \bderive{\E_s} \rightarrow \bderive{\E_n}$.  By a calculation similar to Proposition \ref{geometric_automorphism_relation}, this is equivalent to $\Phi_{\s{U}}$ where $\s{U} = (\mr{Id} \times \pi_{n,s})^{\ast} \sK_s \in \bderive{\E_s \times \E_n}$.  It is clear that for smooth $p \in \E_s$,  $\Phi_{\s{U}}(k(p)) \in \mr{Pic}(\E_n)$.  A calculation similar to the one done in Proposition \ref{they_are_strongly_compatible} shows  for all $p \in \E_s$, $\phi_f(\Phi_{\s{U}}(k(p))) = a$. We claim that $\Phi_{\s{U}}(k(p))$ is stable: if it was not stable its rank only allows for Jordan factors consisting of torsion free indecomposable subsheaves supported on strict subschemes of $\E_n$.  This gives the existence of a torsion free, but not locally free, sheaf in the Jordan decomposition of $\pi_{n,s\ast}\circ\Phi_{\s{U}}(k(p))$.  
 This is not possible since $\pi_{n,s\ast}\circ\Phi_{\s{U}}(k(p))$ is a direct sum of rank 1 stable locally free sheaves on $\E_s$.

The result of this is an inclusion $\E_{s, smooth} \stackrel{\Phi_{\s{U}}}{\rightarrow} \s{M}^{st}_{cl}(n,a)$.  We want to show that its image is the locus of all stable locally free sheaves.  We denote this latter space as $\s{M}^{st, vb}_{cl}(n, a)$.
We do this by showing that if $\sV$ is a stable locally free sheaf on $\E_n$ with $\phi(\sV) = a$ then $\iota_{n,s}^\ast(\sV) \cong \sV$ (clearly $\Phi_{\s{U}}(k(p))$ satisfies this).  Assume this to be true.  One calculates directly that $\phi(\pi_{n,s\ast}(\sV)) = a$ (in $\sigma_{cl}(s)$) and that $\pi_{n,s\ast}(\sV)$ carries a fiberwise action of $\mZ/\frac{n}{d}\mZ$.  The action gives a splitting by eigensubsheaves: $\pi_{n,s\ast}(\sV) \cong \oplus \s{W}_{\lambda_i}$, where $\lambda_i$ are the $n$th roots of unity and $\phi(\s{W}_{\lambda_i}) = a$.   One recovers $\sV$ by pulling back $\s{W}_1$.   To summarize, if  $\iota_{n,s}^\ast(\sV) \cong \sV$ then there exists a locally free sheaf $\sV^\prime$ on $\E_s$ such that $\sV \cong \pi_{n,s}^\ast\sV^{\prime}$.  The stability of $\sV$ shows $\End(\sV) \cong \mC$.  Since $\phom_{\E_s}(\sV^{\prime}, \sV^{\prime}) \subset \phom_{\E_n}(\sV, \sV)$,  $\End_{\E_n}(\sV^{\prime}) = \mC$ as well.  The isomorphism $\sP(\phi_{a}) \cong \sP(1)$ (on $\E_s$) shows $\sV^\prime$ is a stable rank one locally free sheaf.  Thus, $\s{M}^{st,vb}_{cl}(n,a) \cong \E_{s,smooth}$.   

We now show that  if $\sV$ is stable and locally free on $\E_n$ with $\phi(\sV) = a$ then $\iota_{n,s}^\ast(\sV) \cong \sV$.    Assume $\iota_{n,s}^{k\ast}\s{V} \ncong \sV$ for all  $0 < k < \frac{n}{s}$.  Since $\sV$ is assumed to be stable, $\phom_{\E_n}(\iota_{n,s}^{k\ast}\s{V}, \sV) = 0$.  Therefore,
\begin{align*}
\phom_{\E_s}(\pi_{n,s\ast}\s{V}, \pi_{n,s\ast}\s{V})& \cong \phom_{\E_n}(\pi_{n,s}^{\ast}\circ \pi_{n,s\ast}\s{V}, \s{V})\\ & \cong \phom_{\E_n}(\oplus_{0 \leq k < \frac{n}{s}} \iota_{n, s}^{k\ast}\s{V}, \s{V})\\ & \cong \mC
\end{align*}
The previous paragraph shows $\pi_{n,s\ast}\s{V}$ is stable on $\E_s$, with phase $a$.  As such, it must be locally free. This clearly cannot happen, showing $\iota_{n,s}^{k\ast}\sV \cong \sV$ for some $K \subseteq \mZ/\frac{n}{s}\mZ$.   If this is not strict, we are done.  Otherwise, using the previous paragraph again, there exists $t$ such that $s | t | n$ and $\sV \cong \pi_{n,t}^\ast \sV^\prime$ with $\sV^\prime$ locally free on $\E_t$.  Further, $\End(\sV^\prime) \cong \mC$,  $\iota_{t,s}^{k\ast}\sV^\prime \ncong \sV^\prime$ and $\phi(\sV^\prime) = a$.  Recursion gives the contradiction, showing  $\iota_{n,s}^\ast(\sV) \cong \sV$. 

The scheme $\s{M}^{st}_{cl}(n,a) \backslash \s{M}^{st,vb}_{cl}(n,a)$, by definition, consists of stable torsion free, but not locally free sheaves.   Let $\sF$ be such an object and suppose that $\sG := \pi_{n,s\ast}\sF$ is not stable (it is automatically semistable).  In the Jordan decomposition $\sG$ has a stable subsheaf $\sG^\prime$.  If $\sG^\prime$ is not locally free, there exists a $\sF^\prime$ on $\E_n$ with $\phi(\sF^\prime) = a$,  $\pi_{n,s\ast}\sF^\prime \cong \sG^\prime$, and $\sF^\prime \subset \sF$.  This contradicts that $\sF$ is stable.  Alternatively, if $\sG^\prime$ is locally free, it must be a rank one.  Thus $\sF^{\prime} := \pi_{n,s}^\ast \sG^\prime$ is a stable rank one locally free sheaf on $\E_n$.  By adjunction,  there exists a non-trivial $\sF^{\prime} \rightarrow  \sF$ between stable objects which is not an isomorphism, contradicting the stability of $\sF$ and $\sF^\prime$.

The result of this is that if $\sF$ is stable and not locally free, then $\pi_{n,s\ast}\sF$ is stable.   On $\E_s$ there are exactly $s$ of these objects (corresponding to the image of singular skyscrapers under $\Phi_{\sK_s}$).  This yields $s * \frac{n}{s}$ stable non-locally free sheaves in $\sP(a)$ on $\E_n$.   The continuous map $\s{M}_{cl}(n,a) \xrightarrow{\pi_{n,s\ast}} \s{M}_{cl}(s,a) \cong \s{M}_{cl}(s,1)$ shows that each stable  non-locally free object corresponds to an open and closed point in $\s{M}_{cl}^{st}(n,a)$.    Thus $\s{M}_{cl}^{st}(n,a) \cong \s{M}_{cl}^{st,vb} \coprod \mZ/n\mZ$.

Although we have only discussed $\Phi_{\s{U}}$ on smooth skyscrapers, it is defined on all of $\s{M}_{cl}(s,1)$, giving a map $\coprod \mr{Sym}\E_s \rightarrow \s{M}_{cl}(n,a)$.  Since $\s{M}_{cl}(n,a)$ is a separated scheme $\overline{\s{M}^{vb}_{cl}(a)} \cong \E_s$.  These moduli spaces are originating from a GIT quotients.  Since stability is an open condition, one obtains that the components of  $\overline{\s{M}_{cl}^{st}(n,a)}$  are components of $\s{M}_{cl}(n,a)$.
\end{proof}

\begin{corollary}
The group $\Aut_{cl}(n)$ contains all autoequivalences compatible with $\sigma_{cl}(n)$.
\end{corollary}
\begin{proof}
The goal will be to show that if $\Phi$ is an autoequivalence compatible with $\sigma_{cl}(n)$, then we can write $\Phi$ as the composition of elements contained within $\Aut_{cl}(n)$.

In order to carry out such a task, we must understand which phases can be the image of $\Psi(\sP(1))$.  The proof Theorem \ref{Gamma_0} gives a set of (redundant) explicit representatives for the equivalence classes of phases under the action of $\Aut_{cl}(n)$ on phase space.  For convenience we will speak of these representatives through their negative slope, not their phase (e.g. $\psi_{\frac{c}{d}} = \frac{c}{d}$).

By Theorem \ref{compactification}, for a given $a$, the scheme $\overline{\s{M}^{vb}_{cl}(n,a)} \cong \E_d$ where a representative for $a$ in the above set is $\phi_\frac{c}{d}$.  If $\Phi(\sP(1)) \subset \sP(a)$, the representative for the equivalence class $a$ must be of the form $\frac{c}{n}$ with $c$ coprime to $n$.  It is not difficult to see that $\Gamma_0(n)$ act transitively on the set of elements of this form.  Thus, we can choose the representative to be $\frac{1}{n}$, and their exists an element $\Psi_0 \in \Aut_{cl}(n)$ such that $\Psi_0 \circ \Phi(\sP(1)) \cong \sP(1)$.  The compatibility of this morphism implies that $\Psi_0 \circ \Phi(\mr{Coh}(\E_n)) \cong \mr{Coh}(\E_n)$ as subcategories of $\bderive{\E_n}$.  Thus, $\Psi_0 \circ \Phi$ is an autoequivalence of $\mr{Coh}(\E_n)$ that is extended to $\bderive{\E_n}$.  From here, we use similar methods as in the proof of Theorem \ref{Gamma_0} to give the desired result.

\end{proof}

We conclude this section by noting that much of the above analysis can be carried out in any Simpson stability condition on $\E_n$.  With the above methods it is possible to classify not only the stable objects, but also the full structure of the coarse moduli space $\s{M}_{cl}(n,a)$.   This analysis will be done in future publications.

\section*{Acknowledgments}
The author would like to thank David Ben-Zvi for generous support during this project, and the referees for helpful suggestions regarding both the mathematics and exposition of this article.

\bibliography{./advances_bib.bib}
\bibliographystyle{halpha}

\end{document}